 \renewcommand{\a}{\alpha}
\renewcommand{\b}{\beta}
\newcommand{\g}{\gamma}
\renewcommand{\k}{\kappa}
\renewcommand{\t}{\tau}
\renewcommand{\(}{\left\(}
\renewcommand{\)}{\right\)}
\renewcommand{\[}{\left\[}
\renewcommand{\]}{\right\]}
\renewcommand{\i}{\infty}
\numberwithin{equation}{section}
 \theoremstyle{plain}
\newtheorem{theorem}{Theorem}[section]
\newtheorem{lemma}[theorem]{Lemma}
\newtheorem{remark}[]{Remark}
\newtheorem{corollary}[theorem]{Corollary}
\def\proof{\@ifnextchar[{\@oproof}{\@nproof}}
\def\@oproof[#1][#2]{\trivlist\item[\hskip\labelsep\textit{#2 Proof of\
#1.}~]\ignorespaces}
\def\@nproof{\trivlist\item[\hskip\labelsep\textit{Proof.}~]\ignorespaces}
\begin{document}
\title[One variable Generalization of five entries of Ramanujan]{One variable Generalization of five entries of Ramanujan and their finite analogue}

\thanks{$2020$ \textit{Mathematics Subject Classification.} Primary 11P81,  33D15; Secondary  11P84, 05A17.\\
\textit{Keywords and phrases.} Partitions, $q$-series, Finite analogues, Basic hypergeometric series, Partition identities} 

\author{Archit Agarwal}
\address{Archit Agarwal, Department of Mathematics, Indian Institute of Technology Indore, Simrol, Indore 453552, Madhya Pradesh, India.}
\email{phd2001241002@iiti.ac.in, archit.agrw@gmail.com}

\maketitle

\begin{abstract}
Ramanujan recorded five $q$-series identities at the end of his second notebook and an unified generalization of these identities obtained by Bhoria, Eyyunni and Maji. Recently, Dixit and Patel gave a finite analogue of the identity of Bhoria et. al.  which in turn gives finite analogue of all the aforementioned identities of Ramanujan. In this paper, one of our main goals is to obtain a one-variable generalization of the identity of Bhoria et. al. along with its finite analogue, which naturally generalizes the result of Dixit and Patel. Utilizing these newly established identities, we derive one-variable generalizations for each of the five entries by Ramanujan and their corresponding finite analogues.

\end{abstract}

\section{Introduction}
Ramanujan catalogued many $q$-series identities in Chapter XVI of his second notebook  \cite{ramanujantifr}.  He \cite[pp.~302--303]{ramanujantifr} further listed five $q$-series identities at the end of his second notebook.
% which is not well-organized as its previous chapters of his notebooks and perhaps this is the reason that these five identities did not get enough attention.  
%Ramanujan's lost notebook \cite{lnb} contain many $q$-series identities, but only a few $q$-series identities can be found in the unorganized portion of the second and the third notebook. One can find a collection of five distinct $q$-series identities at the end of Ramanujan's second notebook  \cite[pp.~354--355]{ramanujanoriginalnotebook2}, \cite[pp.~302--303]{ramanujantifr}.
Proofs of these five identities can be found in \cite[pp.~262--265]{bcbramforthnote}. In order to state these identities, we first introduce certain essential notations. For any complex number $q$ with $|q| < 1$, the $q$-Pochhammer symbol is defined as
\begin{align*}
&(a)_0 := (a;q)_0 = 1,\\
&(a)_n := (a;q)_n = (1-a)(1-aq)\cdots(1-aq^{n-1}),~n\geq1,\\
&(a)_\infty := (a;q)_\infty := \lim_{n\rightarrow \infty}(a;q)_n,~\textrm{for}~|q|<1.
\end{align*}
These notations are fundamental to the theory of $q$-series. Below, we present the five identities given by Ramanujan.
% The arrangement of these identities in this manner due to Berndt.

\textbf{Entry 1:} Let $n \in \mathbb{N}$, $a \neq 0, b \neq q^{-n}$. Then %For $a \neq 0,1-b q^{n} \neq 0, n \geq 1$,
\begin{align}\label{entry 1}
\frac{(-a q)_{\infty}}{(b q)_{\infty}}=\sum_{n=0}^{\infty} \frac{(-b / a)_{n} a^{n} q^{n(n+1) / 2}}{(q)_{n}(b q)_{n}}.
\end{align}
Ona can also find the above identity in the Lost Notebook \cite[p.~370]{lnb}.

\textbf{Entry 2:} Let $a \neq q^{-n}$ for any natural number $n$, then one has
\begin{align}\label{entry 2}
(a q)_{\infty} \sum_{n=1}^{\infty} \frac{n a^{n} q^{n^{2}}}{(q)_{n}(a q)_{n}}=\sum_{n=1}^{\infty} \frac{(-1)^{n-1} a^{n} q^{n(n+1) / 2}}{1-q^{n}}.
\end{align}

\textbf{Entry 3:} For $a \neq 0$, and $b \neq q^{-n}, n \geq 0$, the following identity is true:
\begin{align}\label{entry 3}
\sum_{n=1}^{\infty} \frac{(b / a)_{n} a^{n}}{\left(1-q^{n}\right)(b)_{n}}=\sum_{n=1}^{\infty} \frac{a^{n}-b^{n}}{1-q^{n}}.
\end{align}
Letting $a \rightarrow 0$, and replacing $b$ by $zq$ gives the next identity.

\textbf{Entry 4:} For $z\neq q^{-n}$ for any $n \geq 1$, we have
\begin{align}\label{entry 4}
\sum_{n=1}^{\infty} \frac{(-1)^{n-1} z^{n} q^{\frac{n(n+1)}{2}}}{\left(1-q^{n}\right)(z q)_{n}}=\sum_{n=1}^{\infty} \frac{z^{n} q^{n}}{1-q^{n}}.
\end{align}
The above identity was rediscovered by Uchimura \cite[Equation (3)]{uchimura81} and Garvan \cite{garvan1}. The last identity is as follows:

\textbf{Entry 5:} If $a \neq q^{-n}$ for any $n \geq 0$, then
\begin{align}\label{entry 5}
\sum_{n=1}^{\infty} \frac{a^{n}(q)_{n-1}}{\left(1-q^{n}\right)(a)_{n}}=\sum_{n=1}^{\infty} \frac{n a^{n}}{1-q^{n}}.
\end{align}
 In 1919, Kluyver \cite{kluyver} found a special case of \eqref{entry 4}, namely, for $z=1$,
\begin{align}\label{kluyver's identity}
\sum_{n=1}^{\infty} \frac{(-1)^{n-1} q^{\frac{n(n+1)}{2}}}{\left(1-q^{n}\right)(q)_{n}}=\sum_{n=1}^{\infty} \frac{q^{n}}{1-q^{n}}.
\end{align}
Fine \cite[p.~14, Equation~(12.4),~(12.42)]{fine}, Uchimura \cite[Theorem 2]{uchimura81} and Zudilin \cite[P.~4]{zudilin} also rediscovered \eqref{kluyver's identity}. Uchimura gave a new expression to \eqref{kluyver's identity}, mainly, he showed that
\begin{align*}
\sum_{n=1}^\infty n q^n (q^{n+1})_\infty=\sum_{n=1}^{\infty} \frac{(-1)^{n-1} q^{\frac{n(n+1)}{2}}}{\left(1-q^{n}\right)(q)_{n}}=\sum_{n=1}^{\infty} \frac{q^{n}}{1-q^{n}}.
\end{align*}
Over the years, this identity has gained significant attention from numerous mathematicians and has been generalized in various ways. Readers interested in exploring these generalizations can see \cite{ABEM23}--\cite{ABEM24} and references therein.

In 2020, Dixit and Maji \cite[Theorem 2.1]{DM} obtained a one variable generalization of \eqref{entry 3}, namely, for complex numbers $a,~b,~c$ with $|a|<1$ and $|cq|<1$, they proved that
\begin{align}\label{dixit maji}
\sum_{n=1}^\infty \frac{(b/a)_na^n}{(1-cq^n)(b)_n}=\sum_{n=0}^\infty \frac{(b/c)_n c^n}{(b)_n} \left(\frac{aq^n}{1-aq^n}-\frac{bq^n}{1-bq^n}\right).
\end{align}
In the same paper, they also derived a more general form of \eqref{entry 4} utilizing \eqref{dixit maji}. Namely, for $|cq|<1$,
\begin{align}\label{corollary of dixit maji}
\sum_{n=1}^\infty \frac{(-1)^{n-1}z^nq^{\frac{n(n+1)}{2}}}{(1-cq^n)(zq)_n}=\frac{z}{c}\sum_{n=1}^\infty \frac{(zq/c)_{n-1}}{(zq)_n}(cq)^n.
\end{align}
The above identity is also a one variable generalization of Andrews, Garvan and Liang's identity \cite[Theorem 3.5]{agl13}. With the help of the aforementioned identity, Dixit and Maji \cite{DM} gave a natural proof of the following identity of Garvan \cite[Equation 1.3]{garvan1}, that is for $|z|\leq 1$,  one has
\begin{align}\label{garvan}
\sum_{n=1}^\infty \frac{(-1)^{n-1} z^n q^{n^2}}{(zq;q^2)_n(1-zq^{2n})}=\sum_{n=1}^\infty \frac{(q;q)_{n-1} z^n q^{\frac{n(n+1)}{2}}}{(zq;q)_n}.
\end{align}
In the same paper, Dixit and Maji further explored many beautiful partition theoretic interpretations arising from \eqref{dixit maji} and \eqref{corollary of dixit maji}. Interested readers can go through \cite{DM} for more details.

Later, Dixit, Eyyunni, Maji and Sood \cite[Theorem 1.1]{DEMS} obtained a finite analogue of the identity \eqref{dixit maji}. For any natural number $N$, complex numbers $a,~b,~c$ with $a,b,c\neq q^{-n},1\leq n\leq N-1, c\neq q^{-N}$, and $a,b\neq 1$, they proved that
\begin{align}\label{finite analogue of dixit maji}
&\sum_{n=1}^N \begin{bmatrix}
N\\n
\end{bmatrix} \frac{(a)_{N-n}(b/a)_n (q)_n a^n}{(a)_N (1-cq^n)(b)_n}\nonumber\\&=\sum_{n=1}^N \begin{bmatrix}
N\\n
\end{bmatrix} \frac{(cq)_{N-n} (b/c)_{n-1} (q)_n c^{n-1}}{(cq)_N (b)_{n-1}}\left( \frac{aq^{n-1}}{1-aq^{n-1}}-\frac{bq^{n-1}}{1-bq^{n-1}} \right),
\end{align}
where
\begin{align*}
\begin{bmatrix}
N\\n
\end{bmatrix} = \begin{bmatrix}
N\\n
\end{bmatrix}_q :=
\begin{cases} 
\frac{(q;q)_N}{(q;q)_n (q;q)_{N-n}},&~~ \textrm{if}~ 0\leq n\leq N,\\
\hspace{9mm}0,&~~ \textrm{otherwise},
\end{cases}
\end{align*} is the $q$-binomial coefficient.
Further, for  $z,c \neq q^{-n},1\leq n \leq N,$ they showed that
\begin{align}\label{finite analogue of corollary of dixit maji}
\sum_{n=1}^N \begin{bmatrix}
N\\n
\end{bmatrix} \frac{ (-1)^{n-1} z^n q^{\frac{n(n+1)}{2}}(q)_n}{(1-cq^n)(zq)_n}=\frac{z}{c}\sum_{n=1}^N \begin{bmatrix}
N\\n
\end{bmatrix} \frac{(cq)_{N-n} (zq/c)_{n-1} (q)_n (cq)^{n}}{(cq)_N (zq)_{n-1}},
\end{align}
which is a finite analogue of \eqref{corollary of dixit maji}. 
With the help of \eqref{finite analogue of corollary of dixit maji}, Dixit et. al. \cite[Theorem 1.3]{DEMS} also derived a finite analogue of Garvan's identity \eqref{garvan}. Mainly, for $z\neq q^{-n}, 1\leq n\leq 4N-1$,
\begin{align}\label{finite analogue of garvan}
&\sum_{n=1}^N \begin{bmatrix}
N\\n
\end{bmatrix}_{q^2} \frac{(-1)^{n-1}(q^2;q^2)_n z^n q^{n^2}}{(zq;q^2)_n (1-zq^{2n})}\nonumber \\ &= \sum_{n=1}^N \begin{bmatrix}
N\\n
\end{bmatrix}_{q^2} \left( \frac{(q)_{2n-2} z^{2n-1} q^{n(2n-1)}}{(zq)_{2n-1}} + \frac{(q)_{2n-1} z^{2n} q^{n(2n+1)}}{(zq)_{2n}} \right)\frac{(q^2;q^2)_n}{(zq^{2N+1};q^2)_n}.
\end{align}
%Allowing $N\rightarrow \infty$ in the above identity gives \eqref{garvan}. 
Many implications of \eqref{finite analogue of dixit maji} and \eqref{finite analogue of corollary of dixit maji} in the theory of restricted partitions are also studied by Dixit et. al. \cite{DEMS}.  

%For further details, readers are referred to \cite{DEMS}.\\
In a recent paper, Bhoria, Eyyunni, and Maji \cite[Theorem 2.1]{BEM21} introduced a one-variable generalization of Dixit and Maji's identity \eqref{dixit maji}. For complex numbers $a,~b,~c,~d$ with $|ad|<1$ and $|cq|<1$, one has
\begin{align}\label{bem}
\sum_{n=1}^\infty \frac{(b/a)_n(c/d)_n(ad)^n}{(b)_n(cq)_n}=\frac{(a-b)(d-c)}{(ad-b)}\sum_{n=0}^\infty\frac{(a)_n(bd/c)_nc^n}{(b)_n(ad)_n}\left(\frac{adq^n}{1-adq^n}-\frac{bq^n}{1-bq^n} \right).
\end{align}
With the help of the above identity, Bhoria et. al. were able to derived all the five identities of Ramanujan \eqref{entry 1}--\eqref{entry 5}. They also derived the following $q$-series identity of Andrews \cite[Corollary 2.2, p. 24]{yesttoday}:
\begin{align}\label{andrews q-series}
\sum_{n=1}^\infty \frac{z^n c^n q^{n^2}}{(zq)_n (cq)_n}=z \sum_{n=1}^\infty \frac{(cq)^n}{(zq)_n}.
\end{align}

Very recently, Dixit and Patel \cite[Theorem 2.1]{DixitPatel} obtained a finite analogue of the identity \eqref{bem} of Bhoria et al., which itself serves as a one variable generalization of \eqref{finite analogue of dixit maji}, namely,
\begin{align}\label{dixit patel}
&\sum_{n=1}^N \begin{bmatrix}
N\\n
\end{bmatrix} \frac{(q)_n (b/a)_n (c/d)_n (ad)_{N-n}(ad)^n}{(b)_n(cq)_n (ad)_N}\nonumber \\ &=\frac{(a-b)(d-c)}{(ad-b)}\sum_{n=1}^N \begin{bmatrix}
N\\n
\end{bmatrix} \frac{(q)_n (cq)_{N-n}(a)_{n-1}(bd/c)_{n-1}c^{n-1}}{(b)_{n-1}(ad)_{n-1}(cq)_N} \left(\frac{adq^{n-1}}{1-adq^{n-1}}-\frac{bq^{n-1}}{1-bq^{n-1}}\right).
\end{align}
Further, Dixit and Patel \cite{DixitPatel} obtained a finite analogue of all five identities of Ramanuajn \eqref{entry 1} -- \eqref{entry 5} from identity \eqref{dixit patel}. In addition, they were able to derive an identity for a finite sum of $_2\phi_1,$ that is,
\begin{align}\label{finite sum of 2phi1}
&\sum_{n=1}^N \begin{bmatrix}
N\\
n
\end{bmatrix} \frac{n(-1)^{n-1}(c/d)_nd^nq^{n(n+1)/2}}{(cq)_n}+ \frac{(c/d)_\infty (dq)_\infty}{(cq)_\infty (dq^{N+1})_\infty}\sum_{n=1}^N \begin{bmatrix}
N\\
n
\end{bmatrix}\frac{d^nq^{n(n+1)}}{(dq)_n(1-q^n)} \nonumber \\ 
&\times {}_{2}\phi_{1}\left( \begin{matrix} dq, & dq^{N+1} \\ & dq^{n+1} \end{matrix}  ;~ \frac{cq^{n+1}}{d} \right)= \frac{c}{c-d} \left(1- \frac{(dq)_N}{(cq)_N}\right)+\frac{1}{(cq)_N}\sum_{n=1}^N \begin{bmatrix}
N\\
n
\end{bmatrix} \frac{(cq/d)_n (dq)_{N-n}(dq)^n}{(1-q^n)}.
\end{align}
They also gave many applications of \eqref{finite sum of 2phi1}, especially they proved Andrew's famous identity for spt function \cite[Theorem 3]{andrews08}. They also derived many elegant $q$-series identities from \eqref{finite sum of 2phi1}. Interested readers can see \cite[Section 5]{DixitPatel} for the applications of \eqref{finite sum of 2phi1}.

This paper is arranged as follows. First, we present a one variable generalization of the identity \eqref{bem} by Bhoria et. al. along with its finite analogue, which naturally generalizes an identity \eqref{dixit patel} of Dixit and Patel and discuss some of its consequences. We also drive a one variable generalization of Garvan's identity \eqref{garvan}. In Section 3, we collect all the necessary results that will help us to prove these findings. In Section 4, we provide the proofs for the generalizations mentioned earlier.  Section 5 focuses on one-variable generalizations of Ramanujan's five identities, along with their finite analogues. In Section 6, we prove Theorem \ref{finite analogue of generalization of garvan's identity} and explore some of its corollaries. Finally, in Section 7, we state a one variable generalization of identity \eqref{finite sum of 2phi1} of Dixit and Patel.

\section{Main Results}
We start with a one variable generalization of the identity \eqref{bem} of Bhoria et. al. \cite[Theorem 2.1]{BEM21}.
\begin{theorem}\label{generalization of BEM's result}
Let $a,b,c,d,e$ be complex numbers such that $\big|ade\big|<1$ and $|ceq|<1$. Then
\begin{align*}
&\sum_{n=1}^\infty \frac{ (b/a)_n (c/d)_n (q/e)_{n-1}(ad)^n e^{n-1}}{(b)_n(cq)_n(q)_{n-1}} \nonumber\\ &= \frac{(a-b)(d-c)}{(ad-b)}\frac{(ceq)_\infty (ad)_\infty}{(cq)_\infty (ade)_\infty} \sum_{n=0}^\infty \frac{(a)_n (bd/c)_n (q/e)_n (ce)^n}{(b)_n(ad)_n(q)_n} \left(\frac{adq^n}{1-adq^n}-\frac{bq^n}{1-bq^n}\right).
\end{align*}
\end{theorem}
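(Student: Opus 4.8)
The plan is to recast each side of the asserted identity as a basic hypergeometric ${}_3\phi_2$ series and then to link the two by a single classical ${}_3\phi_2$ transformation of the type collected in Section~3. On the left I would first shift the index by writing $n=m+1$ and peel the leading factor off each Pochhammer symbol via $(b/a)_{m+1}=(1-b/a)(bq/a)_m$, $(c/d)_{m+1}=(1-c/d)(cq/d)_m$, $(b)_{m+1}=(1-b)(bq)_m$, and $(cq)_{m+1}=(1-cq)(cq^2)_m$, while combining $(ad)^{m+1}e^{m}=ad\,(ade)^{m}$. Collecting the constants turns the left-hand side into
\begin{equation*}
\frac{(a-b)(d-c)}{(1-b)(1-cq)}\,{}_3\phi_2(bq/a,\,cq/d,\,q/e;\,bq,\,cq^2;\,ade).
\end{equation*}

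On the right, the decisive algebraic simplification is the telescoping of the bracketed difference,
\begin{equation*}
\frac{adq^n}{1-adq^n}-\frac{bq^n}{1-bq^n}=\frac{(ad-b)\,q^n}{(1-adq^n)(1-bq^n)},
\end{equation*}
which cancels the factor $(ad-b)$ sitting in the prefactor and, through $(b)_n(1-bq^n)=(b)_{n+1}$ and $(ad)_n(1-adq^n)=(ad)_{n+1}$, recasts the right-hand side as
\begin{equation*}
\frac{(a-b)(d-c)}{(1-b)(1-ad)}\,\frac{(ceq)_\infty(ad)_\infty}{(cq)_\infty(ade)_\infty}\,{}_3\phi_2(a,\,bd/c,\,q/e;\,bq,\,adq;\,ceq).
\end{equation*}

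Canceling the common factor $\frac{(a-b)(d-c)}{1-b}$ and using $(cq)_\infty=(1-cq)(cq^2)_\infty$ together with $(ad)_\infty=(1-ad)(adq)_\infty$, the whole identity collapses to the single transformation
\begin{equation*}
{}_3\phi_2(bq/a,\,cq/d,\,q/e;\,bq,\,cq^2;\,ade)=\frac{(ceq)_\infty(adq)_\infty}{(cq^2)_\infty(ade)_\infty}\,{}_3\phi_2(a,\,bd/c,\,q/e;\,bq,\,adq;\,ceq).
\end{equation*}
Both series are of the special kind whose argument equals the quotient of the product of the lower parameters by the product of the upper parameters (one checks $ade=\frac{bq\cdot cq^2}{(bq/a)(cq/d)(q/e)}$ on the left, and $ceq$ analogously on the right), so this is exactly an instance of the ${}_3\phi_2$ transformation I would quote from Section~3. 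The main obstacle, and the only genuinely delicate point, is the parameter bookkeeping: because a ${}_3\phi_2$ is symmetric in its numerator parameters and in its denominator parameters, several assignments are possible, and one must designate $q/e$ as the retained numerator parameter and $bq$ as the retained denominator parameter (leaving $cq^2$ as the second denominator parameter) so that the transformation outputs precisely the numerator parameters $\{q/e,a,bd/c\}$, the denominator parameters $\{bq,adq\}$, the argument $ceq$, and the displayed infinite-product prefactor. Finally, the hypotheses $|ade|<1$ and $|ceq|<1$ are exactly what guarantee convergence of both series and the validity of the transformation, which completes the proof; specializing $e=1$ recovers \eqref{bem} as a consistency check.
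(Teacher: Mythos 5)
Your proposal is correct and is essentially the paper's own proof run in reverse: the paper starts from the ${}_3\phi_2$ transformation \eqref{3_phi_2} with exactly the same parameter assignment $A=q/e$, $B=bq/a$, $C=cq/d$, $D=bq$, $E=cq^2$, then re-indexes and multiplies by the same constants to arrive at the stated identity. Your bookkeeping (the index shift, the telescoping of the bracketed difference, and the identification of the argument as $DE/(ABC)=ade$ and $E/A=ceq$) all checks out.
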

By substituting $e=1$ in the above theorem, one can easily obtain \eqref{bem}. Also, letting $a\rightarrow 0$ and then replacing $b$ by $zq$ in Theorem \ref{generalization of BEM's result}, we get the below result.
\begin{corollary}\label{4-variable generalization of Andrews Garvan Liang}
For $|ceq|<1$, we have
\begin{align}
\sum_{n=1}^\infty &\frac{ (c/d)_n (q/e)_{n-1} (-dz)^n e^{n-1} q^{n(n+1)/2}}{(zq)_n(cq)_n(q)_{n-1}} \nonumber\\ 
&= (c-d) \frac{z}{c} \frac{(ceq)_\infty}{(cq)_\infty } \sum_{n=1}^\infty \frac{(zqd/c)_{n-1}(q/e)_{n-1} (cq)^n e^{n-1}}{(zq)_n(q)_{n-1}}.
\end{align}
\end{corollary}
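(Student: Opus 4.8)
The plan is to derive Corollary~\ref{4-variable generalization of Andrews Garvan Liang} directly from Theorem~\ref{generalization of BEM's result} by carrying out the specialization stated in the text, namely letting $a \to 0$ and then replacing $b$ by $zq$, and finally reindexing the resulting series on the right. Since the corollary is asserted to follow by exactly this route, the genuine content is a careful limit computation on each side together with a justification that the limit $a \to 0$ may be passed through the infinite sums.

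First I would treat the left-hand side. The delicate factor is $(b/a)_n$, which blows up like $a^{-n}$ as $a \to 0$; it appears, however, multiplied by $(ad)^n$, and
\begin{equation*}
(b/a)_n (ad)^n = d^n \prod_{k=0}^{n-1}\left(a - bq^k\right),
\end{equation*}
which is a polynomial in $a$ with value $(-1)^n b^n d^n q^{n(n-1)/2}$ at $a=0$. Hence the $n$-th summand on the left tends, as $a\to 0$, to
\begin{equation*}
\frac{(c/d)_n (q/e)_{n-1} (-1)^n b^n d^n q^{n(n-1)/2} e^{n-1}}{(b)_n (cq)_n (q)_{n-1}}.
\end{equation*}
Putting $b = zq$ and using $q^{n(n-1)/2}(zq)^n = z^n q^{n(n+1)/2}$ collapses this to the summand $(c/d)_n (q/e)_{n-1}(-dz)^n e^{n-1} q^{n(n+1)/2}/\left((zq)_n (cq)_n (q)_{n-1}\right)$ on the left of the corollary.

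Next I would handle the right-hand side. As $a\to 0$ the factors $(a)_n$ and $(ad)_n$ tend to $1$, the ratio $(ad)_\infty/(ade)_\infty$ tends to $1$, the prefactor $\frac{(a-b)(d-c)}{ad-b}$ tends to $d-c$, and in the bracket $\frac{adq^n}{1-adq^n}\to 0$, so the right side tends to
\begin{equation*}
(d-c)\frac{(ceq)_\infty}{(cq)_\infty}\sum_{n=0}^\infty \frac{(bd/c)_n (q/e)_n (ce)^n}{(b)_n (q)_n}\left(-\frac{bq^n}{1-bq^n}\right).
\end{equation*}
Setting $b=zq$, pulling out $-zq$, and using the elementary collapse $(zq)_n\left(1-zq^{n+1}\right)=(zq)_{n+1}$, the series becomes $\sum_{n=0}^\infty (zqd/c)_n (q/e)_n (ce)^n q^n/\left((zq)_{n+1}(q)_n\right)$; the shift $m=n+1$ then produces exactly $\frac{(c-d)z}{c}\frac{(ceq)_\infty}{(cq)_\infty}\sum_{m=1}^\infty (zqd/c)_{m-1}(q/e)_{m-1}(cq)^m e^{m-1}/\left((zq)_m(q)_{m-1}\right)$, the factor $\frac{(c-d)z}{c}$ emerging after matching $(-zq)(d-c)=(c-d)zq$ against the $c^{m-1}q^{m-1}$ carried by $(ce)^{m-1}q^{m-1}$.

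The only genuine obstacle is the interchange of the limit $a\to 0$ with the two infinite summations. I would justify this by analyticity: after clearing the removable singularity exhibited above, each summand on either side is analytic in $a$ on a neighborhood of the origin, and under the standing hypotheses $|ceq|<1$ (which survives the limit) together with $|ade|\to 0$, both series converge uniformly for $a$ in a small disc about $0$. Consequently each side is continuous at $a=0$ and the termwise passage to the limit is legitimate. The remaining work --- tracking the powers of $c$, $e$, and $q$ through the reindexing so that $(ce)^{m-1}q^{m-1}\cdot zq$ assembles into $(cq)^m e^{m-1}\cdot \frac{z}{c}$ --- is routine bookkeeping.
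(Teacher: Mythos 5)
Your proposal is correct and follows exactly the route the paper indicates: the paper derives this corollary by letting $a\to 0$ and then setting $b=zq$ in Theorem \ref{generalization of BEM's result}, which is precisely what you carry out (the paper merely states the specialization without showing the computation, whereas you verify the removable singularity in $(b/a)_n(ad)^n$, the collapse of the prefactor to $d-c$, and the reindexing $m=n+1$ in detail, all of which check out).
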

Letting $e=1$ in the above identity gives the following identity of Bhoria et. al. \cite[Theorem 2.2]{BEM21}:
\begin{align*}
\sum_{n=1}^\infty \frac{(-z)^n(c/d)_n d^nq^{\frac{n(n+1)}{2}}}{(cq)_n(zq)_n}=\frac{z(c-d)}{c}\sum_{n=1}^\infty \frac{(zdq/c)_{n-1}}{(zq)_n}(cq)^n.
\end{align*}
Further, setting $d \rightarrow 0$ in Corollary \ref{4-variable generalization of Andrews Garvan Liang} gives a generalization of \eqref{andrews q-series}.
\begin{corollary}\label{2-variable generalization of Andrews}
For $|ceq|<1$, we have
\begin{align}\label{2-variable generalization of Andrews equation}
\sum_{n=1}^\infty \frac{(q/e)_{n-1} e^{n-1} (cz)^nq^{n^2}}{(zq)_n (cq)_n (q)_{n-1}} = \frac{z(ceq)_\infty}{(cq)_\infty}\sum_{n=1}^\infty \frac{(q/e)_{n-1} e^{n-1} (cq)^n}{(zq)_n (q)_{n-1}} . 
\end{align}
\end{corollary}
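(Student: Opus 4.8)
The plan is to derive this identity simply as the limiting case $d\to 0$ of Corollary \ref{4-variable generalization of Andrews Garvan Liang}, as indicated in the text. The only substantive work is tracking the $d$-dependent factors on each side and justifying that the limit may be carried inside the infinite sums.

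First I would treat the left-hand side, where the delicate factor is $(c/d)_n(-dz)^n$: individually $(c/d)_n$ blows up as $d\to0$ while $(-dz)^n$ vanishes, so their product must be examined together. Writing each factor as $1-(c/d)q^k=-(c/d)q^k\bigl(1-(d/c)q^{-k}\bigr)$ and collecting the constants yields
$$(c/d)_n(-dz)^n=(cz)^n q^{n(n-1)/2}\prod_{k=0}^{n-1}\bigl(1-(d/c)q^{-k}\bigr),$$
and the remaining product tends to $1$, so the factor converges termwise to $(cz)^n q^{n(n-1)/2}$. Combining this with the existing factor $q^{n(n+1)/2}$ via $q^{n(n-1)/2}\cdot q^{n(n+1)/2}=q^{n^2}$ turns each summand into $(cz)^n q^{n^2}(q/e)_{n-1}e^{n-1}/\bigl[(zq)_n(cq)_n(q)_{n-1}\bigr]$, which is exactly the left side of \eqref{2-variable generalization of Andrews equation}.

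On the right-hand side the limit is immediate: the prefactor $(c-d)\,z/c\to z$, and $(zqd/c)_{n-1}\to(0)_{n-1}=1$, since every factor $1-(zqd/c)q^k\to 1$. Thus the right side collapses to $z\,(ceq)_\infty/(cq)_\infty\sum_{n\ge1}(q/e)_{n-1}e^{n-1}(cq)^n/\bigl[(zq)_n(q)_{n-1}\bigr]$, matching the claim; specialising $e=1$ afterwards recovers Andrews' identity \eqref{andrews q-series} as promised.

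The one point requiring care — and the main, if modest, obstacle — is justifying the interchange of $\lim_{d\to0}$ with the summations. On the region $|ceq|<1$, with $c,e,z$ fixed away from the excluded poles, the summands admit a convergent majorant uniform in small $|d|$: on the left the combination $(cz)^nq^{n^2}$ forces rapid decay while $\prod_k\bigl(1-(d/c)q^{-k}\bigr)$ stays bounded in $n$; on the right $(cq)^ne^{n-1}=e^{-1}(ceq)^n$ decays geometrically, $(q/e)_{n-1}$ is bounded by $|(q/e)_\infty|$, and $(zqd/c)_{n-1}$ is bounded for small $|d|$. Dominated convergence then legitimises the termwise limits computed above, completing the proof.
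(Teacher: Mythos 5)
This is exactly the paper's route --- the corollary is stated as the $d\to0$ specialization of Corollary \ref{4-variable generalization of Andrews Garvan Liang} with no further argument given --- and your termwise computation $(c/d)_n(-dz)^n=(cz)^nq^{n(n-1)/2}\prod_{k=0}^{n-1}\bigl(1-(d/c)q^{-k}\bigr)\to(cz)^nq^{n(n-1)/2}$, together with the trivial limit on the right-hand side, is correct. The one inaccuracy is in your domination argument: for fixed $d\neq0$ the product $\prod_{k=0}^{n-1}\bigl(1-(d/c)q^{-k}\bigr)$ is \emph{not} bounded in $n$ (since $|q|^{-k}\to\infty$ it grows roughly like $|d/c|^{n}|q|^{-n(n-1)/2}$), but this growth is exactly cancelled by the factor $q^{n(n-1)/2}$ you already extracted, leaving a geometric bound in $n$ that the remaining $q^{n(n+1)/2}$ dominates, so the interchange of limit and sum still goes through.
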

Under the substitution $d=1$ in Theorem $\ref{generalization of BEM's result}$ gives an interesting new generalization of \eqref{dixit maji}.
\begin{corollary}\label{different generalization of dixit-maji} For $\left|ae\right|<1$ and $|ceq|<1$, we have
\begin{align}
\sum_{n=1}^\infty \frac{ (b/a)_n (q/e)_{n-1} a^n e^{n-1}}{(1-cq^n) (b)_n (q)_{n-1}} = \frac{(ceq)_\infty (a)_\infty}{(cq)_\infty (ae)_\infty}\sum_{n=0}^\infty \frac{(b/c)_n (q/e)_n (ce)^n}{(b)_n(q)_n} \left(\frac{aq^n}{1-aq^n}-\frac{bq^n}{1-bq^n}\right).
\end{align}
\end{corollary}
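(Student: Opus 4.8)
The plan is to obtain the corollary as a direct specialization of Theorem \ref{generalization of BEM's result} at $d=1$, so the whole argument reduces to substituting $d=1$ and simplifying the resulting Pochhammer symbols and infinite products. First I would set $d=1$ throughout the theorem. On the left-hand side the only genuinely $d$-dependent factors are $(c/d)_n$, which becomes $(c)_n$, and $(ad)^n$, which becomes $a^n$. Using the telescoping identity $(c)_n/(cq)_n = (1-c)/(1-cq^n)$ (the factors $(1-cq^j)$ for $1\le j\le n-1$ cancel between numerator and denominator), the summand collapses to $(b/a)_n (q/e)_{n-1} a^n e^{n-1}/\big((1-cq^n)(b)_n(q)_{n-1}\big)$ multiplied by the global constant $(1-c)$.

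Next I would simplify the right-hand side under $d=1$. The prefactor $(a-b)(d-c)/(ad-b)$ becomes $(a-b)(1-c)/(a-b)=1-c$, since $ad-b=a-b$; the infinite product collapses to $(ceq)_\infty (a)_\infty/\big((cq)_\infty (ae)_\infty\big)$; and inside the series $(bd/c)_n=(b/c)_n$ while the numerator factor $(a)_n$ is cancelled exactly by $(ad)_n=(a)_n$ in the denominator. Hence the right-hand side becomes $(1-c)$ times $(ceq)_\infty (a)_\infty/\big((cq)_\infty(ae)_\infty\big)\sum_{n=0}^\infty (b/c)_n(q/e)_n(ce)^n/\big((b)_n(q)_n\big)\big(aq^n/(1-aq^n)-bq^n/(1-bq^n)\big)$, which is the claimed right-hand side up to the factor $(1-c)$.

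At this point both sides carry the common factor $(1-c)$, and I would cancel it to reach the stated identity. The convergence hypotheses match automatically: $|ade|<1$ specializes to $|ae|<1$ while $|ceq|<1$ is unchanged, which are exactly the conditions imposed in the corollary.

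The step requiring the most care is the cancellation of common factors rather than any analytic estimate. One must verify that $(a-b)$ genuinely cancels in the prefactor (valid for generic $a\ne b$) and, more importantly, that the factor $(1-c)$ produced on the left by the telescoping is identical to the $(1-c)$ produced on the right by the prefactor, so that dividing through is legitimate; the degenerate case $c=1$ is then recovered by continuity in $c$. Beyond this bookkeeping the result is immediate from Theorem \ref{generalization of BEM's result}.
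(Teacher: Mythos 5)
Your proposal is correct and follows exactly the route the paper intends: the paper simply states that the corollary follows from Theorem \ref{generalization of BEM's result} by the substitution $d=1$, and your verification of the simplifications (the telescoping $(c)_n/(cq)_n=(1-c)/(1-cq^n)$, the collapse of the prefactor to $1-c$, the cancellation of $(a)_n$ against $(ad)_n$, and the final cancellation of the common factor $1-c$) supplies precisely the bookkeeping the paper leaves implicit. No gaps; the specialization of the convergence hypotheses is also handled correctly.
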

Here we remark that the above generalization is different from the generalization \eqref{bem} obtained by Bhoria, Eyyunni and Maji.

\subsection{Finite Analogue of Theorem \ref{generalization of BEM's result}}
We now state a finite analogue of Theorem \ref{generalization of BEM's result} which essentially generalizes the identity \eqref{dixit patel} of Dixit and Patel.
\begin{theorem}\label{finite analogue of generalization of BEM's result} For any natural number $N$, we have
\begin{align}
&\sum_{n=1}^N \begin{bmatrix}
N\\
n
\end{bmatrix} \frac{(q)_n (b/a)_n (c/d)_n (q/e)_{n-1} (ade)_{N-n} (ad)^n e^{n-1}}{(b)_n(cq)_n(q)_{n-1}} = \frac{(a-b)(d-c)}{(ad-b)} \frac{(ad)_N}{(cq)_N} \nonumber \\
& \times \sum_{n=1}^N \begin{bmatrix}
N\\
n
\end{bmatrix} \frac{(q)_n (ceq)_{N-n} (a)_{n-1} (bd/c)_{n-1} (q/e)_{n-1} (ce)^{n-1}}{(b)_{n-1}(ad)_{n-1}(q)_{n-1}} \left(\frac{adq^{n-1}}{1-adq^{n-1}}-\frac{bq^{n-1}}{1-bq^{n-1}}\right).
\end{align}
\end{theorem}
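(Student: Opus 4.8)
The plan is to reduce the claimed identity, by two elementary manipulations, to a single classical transformation between terminating balanced ${}_4\phi_3$ series. \emph{First} I would collapse the right-hand side. Since
\begin{equation*}
\frac{adq^{n-1}}{1-adq^{n-1}}-\frac{bq^{n-1}}{1-bq^{n-1}}=\frac{(ad-b)q^{n-1}}{(1-adq^{n-1})(1-bq^{n-1})},
\end{equation*}
the factor $(ad-b)$ cancels the prefactor $\frac{(a-b)(d-c)}{(ad-b)}$, while $(ad)_{n-1}(1-adq^{n-1})=(ad)_n$ and $(b)_{n-1}(1-bq^{n-1})=(b)_n$ absorb the two remaining linear factors. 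This turns the right side into a single terminating sum, so that the assertion becomes an equality between two finite sums of the same type.

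\emph{Next} I would put $n=m+1$ in both sums. Every $(\cdot)_{n-1}$ becomes a clean $(\cdot)_m$; the first Pochhammer factors split off $(1-b/a)(1-c/d)$ together with an $ad$ on the left, which combine to the constant $(a-b)(d-c)$ already present on the right, and the factors $(1-b)$ and $(1-q^N)\begin{bmatrix}M\\m\end{bmatrix}$ (with $M:=N-1$) appear on both sides. Cancelling these common factors reduces the theorem to $\frac{L}{1-cq}=\frac{(ad)_N}{(1-ad)(cq)_N}\,R$, where
\begin{align*}
L&:=\sum_{m=0}^M\begin{bmatrix}M\\m\end{bmatrix}\frac{(bq/a)_m(cq/d)_m(q/e)_m(ade)_{M-m}(ade)^m}{(bq)_m(cq^2)_m},\\
R&:=\sum_{m=0}^M\begin{bmatrix}M\\m\end{bmatrix}\frac{(a)_m(bd/c)_m(q/e)_m(ceq)_{M-m}(ceq)^m}{(bq)_m(adq)_m}.
\end{align*}

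The heart of the proof is to recognize $L$ and $R$ as balanced ${}_4\phi_3$ series. Writing $\begin{bmatrix}M\\m\end{bmatrix}(q)_m=(q^{-M})_m(-1)^mq^{Mm-\binom{m}{2}}$ and using the reflection $(x)_{M-m}=\frac{(x)_M}{(q^{1-M}/x)_m}(-q/x)^mq^{\binom{m}{2}-Mm}$ on $(ade)_{M-m}$ and $(ceq)_{M-m}$, all the $q^{\binom{m}{2}}$ and sign factors cancel and one finds
\begin{align*}
L&=(ade)_M\,{}_{4}\phi_{3}\left(\begin{matrix}q^{-M},\ bq/a,\ cq/d,\ q/e\\ bq,\ cq^2,\ q^{1-M}/(ade)\end{matrix};q,q\right),\\
R&=(ceq)_M\,{}_{4}\phi_{3}\left(\begin{matrix}q^{-M},\ a,\ bd/c,\ q/e\\ bq,\ adq,\ q^{1-M}/(ceq)\end{matrix};q,q\right),
\end{align*}
and a direct check shows each series is Saalschützian, the balancing condition $bq\cdot cq^2\cdot q^{1-M}/(ade)=q\cdot q^{-M}\cdot(bq/a)(cq/d)(q/e)$ holding identically. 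I would then invoke Sears' transformation for a terminating balanced ${}_4\phi_3$,
\begin{equation*}
{}_{4}\phi_{3}\left(\begin{matrix}q^{-M},\ \alpha,\ \beta,\ \gamma\\ \delta,\ \epsilon,\ \zeta\end{matrix};q,q\right)=\frac{(\epsilon/\alpha)_M(\zeta/\alpha)_M}{(\epsilon)_M(\zeta)_M}\alpha^M\,{}_{4}\phi_{3}\left(\begin{matrix}q^{-M},\ \alpha,\ \delta/\beta,\ \delta/\gamma\\ \delta,\ \alpha q^{1-M}/\epsilon,\ \alpha q^{1-M}/\zeta\end{matrix};q,q\right),
\end{equation*}
with the choice $\alpha=q/e$, $\beta=bq/a$, $\gamma=cq/d$, $\delta=bq$, $\epsilon=cq^2$, $\zeta=q^{1-M}/(ade)$, which carries the ${}_4\phi_3$ in $L$ exactly onto the one in $R$ (the parameter $q/e$ being preserved and $\{bq/a,cq/d\}$ mapping to $\{a,bd/c\}$). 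Since $\epsilon/\alpha=ceq$, the prefactor $(\epsilon/\alpha)_M=(ceq)_M$ cancels the $(ceq)_M$ in front of $R$, and after the companion reflection $(x)_M=(-x)^Mq^{\binom{M}{2}}(q^{1-M}/x)_M$ the remaining prefactor collapses to $\frac{(adq)_M}{(cq^2)_M}$; one checks $\frac{(adq)_M}{(cq^2)_M}=\frac{(1-cq)(ad)_N}{(1-ad)(cq)_N}$ from $(ad)_N=(1-ad)(adq)_M$ and $(cq)_N=(1-cq)(cq^2)_M$, which is precisely the factor demanded by the reduced identity.

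I expect the main obstacle to be this last step: seeing that the two finite sums are genuinely \emph{balanced} ${}_4\phi_3$ series and pinning down the Sears parameter dictionary that fixes $q/e$ while sending the lower triple $\{bq,cq^2,q^{1-M}/(ade)\}$ to $\{bq,adq,q^{1-M}/(ceq)\}$. Once the parameter matching and the balancing condition are secured, the verification that every power of $q$ and of $e$ in the Sears prefactor cancels — leaving exactly $\frac{(1-cq)(ad)_N}{(1-ad)(cq)_N}$ — is routine bookkeeping. As a consistency check, the infinite identity of Theorem~\ref{generalization of BEM's result} is recovered on letting $N\to\infty$, both sides acquiring a common factor $(ade)_\infty$.
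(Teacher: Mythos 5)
Your proposal is correct and is essentially the paper's own proof run in reverse: the paper starts from the same terminating balanced ${}_4\phi_3$ transformation (Gasper--Rahman (3.2.1), i.e.\ Sears' transformation) with exactly the substitution $A=q/e$, $B=bq/a$, $C=cq/d$, $D=bq$, $E=cq^2$, converts the $(q^{-N})_n/(q^{1-N}/x)_n$ factors into $(x)_{N-n}$ via the same reflection identities, re-indexes, and multiplies by the constant $\frac{(1-q^N)(1-b/a)(1-c/d)ad}{(1-b)(1-cq)}$, whereas you reduce the stated identity back to that transformation. The parameter dictionary, the balancing check, and the prefactor bookkeeping all match the paper's computation.
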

Setting $e=1$ in the above identity gives \eqref{dixit patel}. Moreover, letting $a\rightarrow 0$ and then substituting $b=zq$ in Theorem \ref{finite analogue of generalization of BEM's result}, we get a finite analogue of \eqref{4-variable generalization of Andrews Garvan Liang}.
\begin{corollary}\label{finite analogue of 4-variable generalization of Andrews Garvan Liang} We have
\begin{align}
\sum_{n=1}^N \begin{bmatrix}
N\\
n
\end{bmatrix} &\frac{(q)_n (c/d)_n (q/e)_{n-1} (-zd)^n q^{n(n+1)/2} e^{n-1} }{(zq)_n(cq)_n(q)_{n-1}} = \frac{z(c-d)}{c(cq)_N} \nonumber \\
& \times  \sum_{n=1}^N \begin{bmatrix}
N\\
n
\end{bmatrix} \frac{(q)_n(ceq)_{N-n} (zdq/c)_{n-1}(q/e)_{n-1}(cq)^n e^{n-1}}{(zq)_{n}(q)_{n-1}}.
\end{align}
\end{corollary}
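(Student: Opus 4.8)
The plan is to obtain the corollary by specializing Theorem \ref{finite analogue of generalization of BEM's result}, exactly as indicated in the text: first let $a \to 0$, then set $b = zq$. Since both operations are limits/substitutions, the whole argument reduces to careful bookkeeping of how each Pochhammer factor and each power of $q$ behaves. I would organize the work by treating the two sides of the theorem separately and then equating the resulting specialized expressions.

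On the left-hand side, the only delicate factor is the product $(b/a)_n\,(ad)^n$, since $(b/a)_n$ blows up and $(ad)^n$ vanishes as $a \to 0$. Writing $(b/a)_n = a^{-n}\prod_{k=0}^{n-1}(a - bq^k)$, the $a^{-n}$ cancels the $a^n$ inside $(ad)^n = a^n d^n$, leaving $d^n\prod_{k=0}^{n-1}(a - bq^k)$, which tends to $d^n(-b)^n q^{n(n-1)/2} = (-bd)^n q^{n(n-1)/2}$; meanwhile $(ade)_{N-n} \to (0;q)_{N-n} = 1$. Substituting $b = zq$ then converts $(-bd)^n = (-zd)^n q^n$, so that $q^{n(n-1)/2}\cdot q^n = q^{n(n+1)/2}$, and $(b)_n$ becomes $(zq)_n$; this recovers precisely the left side of the corollary.

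On the right-hand side the behavior as $a \to 0$ is: $\frac{(a-b)(d-c)}{(ad-b)} \to d - c$, while $(ad)_N$, $(a)_{n-1}$, and $(ad)_{n-1}$ each tend to $1$, and the bracket $\frac{adq^{n-1}}{1-adq^{n-1}} - \frac{bq^{n-1}}{1-bq^{n-1}}$ collapses to $-\frac{bq^{n-1}}{1-bq^{n-1}}$. Combining this last factor with $\frac{1}{(b)_{n-1}}$ and using $(b)_{n-1}(1-bq^{n-1}) = (b)_n$ rewrites it as $-\frac{bq^{n-1}}{(b)_n}$, which upon $b = zq$ becomes $-\frac{zq^n}{(zq)_n}$. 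Absorbing the sign into $d - c$ gives $c - d$, and regrouping $z(ce)^{n-1}q^n = \frac{z}{c}(cq)^n e^{n-1}$ assembles the prefactor $\frac{z(c-d)}{c(cq)_N}$ together with the summand of the corollary. Equating the two specialized sides then yields the claim. I do not expect a genuine obstacle here; the only steps requiring care are the $a\to 0$ limit of $(b/a)_n(ad)^n$ on the left and the simultaneous collapse of the two-term bracket on the right, after which one must simply verify that the $q$-powers and the numerical prefactor reassemble correctly.
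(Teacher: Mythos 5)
Your proposal is correct and follows exactly the route the paper itself indicates (the paper gives no separate proof, only the instruction to let $a\to 0$ and then set $b=zq$ in Theorem \ref{finite analogue of generalization of BEM's result}); your limit computations for $(b/a)_n(ad)^n$, the collapse of the two-term bracket, and the reassembly of the prefactor $\frac{z(c-d)}{c(cq)_N}$ all check out.
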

Putting $e=1$ in the above identity gives the below identity of Dixit and Patel \cite[corollary 2.2]{DixitPatel},
\begin{align*}
\sum_{n=1}^N \begin{bmatrix}
N\\n
\end{bmatrix} \frac{(q)_n (c/d)_n (-zd)^n q^{n(n+1)/2}}{(zq)_n(cq)_n}= \frac{z(c-d)}{c}\sum_{n=1}^N \begin{bmatrix}
N\\n
\end{bmatrix} \frac{(q)_n(cq)_{N-n} (zdq/c)_{n-1}(cq)^n}{(zq)_{n}(cq)_N}.
\end{align*}
Further, by allowing $d\rightarrow 0$ in Corollary \ref{finite analogue of 4-variable generalization of Andrews Garvan Liang}, we arrive at the following finite analogue of the identity \eqref{2-variable generalization of Andrews equation}:
\begin{corollary}\label{finite analogue of 2-variable generalization of Andrews} We have
\begin{align*}
\sum_{n=1}^N \begin{bmatrix}
N\\
n
\end{bmatrix} \frac{(q)_n (q/e)_{n-1} (zc)^n e^{n-1} q^{n^2} }{(zq)_n(cq)_n(q)_{n-1}} = \frac{z}{(cq)_N} \sum_{n=1}^N \begin{bmatrix}
N\\
n
\end{bmatrix} \frac{(q)_n (ceq)_{N-n} (q/e)_{n-1}(cq)^n e^{n-1}}{(zq)_{n}(q)_{n-1}}.
\end{align*}
\end{corollary}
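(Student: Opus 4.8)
The plan is to obtain this identity by letting $d \to 0$ in Corollary \ref{finite analogue of 4-variable generalization of Andrews Garvan Liang}, exactly as indicated in the text preceding the statement. Since each side is a finite sum of $N$ terms, it suffices to check that every summand converges to the claimed limit; no uniformity argument is required. The only genuine work is to control the factor $(c/d)_n(-zd)^n$ on the left-hand side, which is the lone place where a potentially divergent quantity appears.

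I would first treat the right-hand side, which is the easier of the two. As $d \to 0$, the prefactor $\frac{z(c-d)}{c}$ tends to $z$, while each factor $(zdq/c)_{n-1} \to (0)_{n-1} = 1$, since every entry $1 - (zdq/c)q^k$ tends to $1$. All remaining factors (the $q$-binomial coefficient, $(q)_n$, $(ceq)_{N-n}$, $(q/e)_{n-1}$, $(cq)^n$, $e^{n-1}$, $(zq)_n$, $(q)_{n-1}$, and the prefactor $1/(cq)_N$) are independent of $d$. Hence the right-hand side passes termwise to
\begin{align*}
\frac{z}{(cq)_N} \sum_{n=1}^N \begin{bmatrix} N\\ n \end{bmatrix} \frac{(q)_n (ceq)_{N-n} (q/e)_{n-1}(cq)^n e^{n-1}}{(zq)_{n}(q)_{n-1}},
\end{align*}
which is precisely the desired right-hand side.

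The main step is the left-hand side, where the troublesome factor is $(c/d)_n(-zd)^n$. I would extract the leading behaviour of $(c/d)_n$ by factoring $-cq^k/d$ out of each of the $n$ factors of the product:
\begin{align*}
(c/d)_n = \prod_{k=0}^{n-1}\left(1 - \frac{cq^k}{d}\right) = \left(-\frac{c}{d}\right)^n q^{n(n-1)/2} \prod_{k=0}^{n-1}\left(1 - \frac{d}{c}q^{-k}\right).
\end{align*}
The residual product tends to $1$ as $d \to 0$, so upon multiplying by $(-zd)^n$ the factors $d^{-n}$ and $d^{n}$ cancel and $(c/d)_n(-zd)^n \to (zc)^n q^{n(n-1)/2}$. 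Combining this with the factor $q^{n(n+1)/2}$ already present yields $q^{n(n-1)/2 + n(n+1)/2} = q^{n^2}$, so the left-hand side converges termwise to
\begin{align*}
\sum_{n=1}^N \begin{bmatrix} N\\ n \end{bmatrix} \frac{(q)_n (q/e)_{n-1} (zc)^n e^{n-1} q^{n^2} }{(zq)_n(cq)_n(q)_{n-1}},
\end{align*}
matching the left-hand side of the statement and completing the proof. The only subtlety worth flagging is precisely this cancellation of $d^{-n}$ against $d^{n}$ in $(c/d)_n(-zd)^n$, which is what makes the $d \to 0$ specialization well defined rather than singular; everything else is a routine term-by-term limit.
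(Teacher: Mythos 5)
Your proposal is correct and follows exactly the route the paper intends: the paper derives this corollary precisely by letting $d \to 0$ in Corollary \ref{finite analogue of 4-variable generalization of Andrews Garvan Liang}, and your careful extraction of $(c/d)_n(-zd)^n \to (zc)^n q^{n(n-1)/2}$ is the only nontrivial point of that termwise limit. Nothing is missing.
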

Again, substituting $d=1$ in Theorem \ref{finite analogue of generalization of BEM's result}, we get a finite analogue of \eqref{different generalization of dixit-maji}.
\begin{corollary}\label{finite analogue of different generalization of dixit-maji} We have
\begin{align*}
&\sum_{n=1}^N \begin{bmatrix}
N\\
n
\end{bmatrix} \frac{(q)_n (b/a)_n (q/e)_{n-1} (ae)_{N-n} (a)^n e^{n-1}}{(1-cq^n)(b)_n(q)_{n-1}} \nonumber \\ &= \frac{(a)_N}{(cq)_N} \sum_{n=1}^N \begin{bmatrix}
N\\
n
\end{bmatrix} \frac{(q)_n (ceq)_{N-n} (b/c)_{n-1} (q/e)_{n-1} (ce)^{n-1}}{(b)_{n-1}(q)_{n-1}} \left(\frac{aq^{n-1}}{1-aq^{n-1}}-\frac{bq^{n-1}}{1-bq^{n-1}}\right).
\end{align*}
\end{corollary}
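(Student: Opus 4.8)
The statement is a specialization of Theorem \ref{finite analogue of generalization of BEM's result}, so the plan is to set $d=1$ throughout and then reduce both sides by elementary $q$-Pochhammer manipulations; there is no analytic obstacle, as the finite analogue is a rational identity in the parameters that survives the substitution. The only point needing care is the prefactor on the right, handled below.

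First I would substitute $d=1$ into the left-hand side of Theorem \ref{finite analogue of generalization of BEM's result}. The factor $(c/d)_n$ becomes $(c)_n$, and since
$$\frac{(c;q)_n}{(cq;q)_n}=\frac{(1-c)(1-cq)\cdots(1-cq^{n-1})}{(1-cq)(1-cq^2)\cdots(1-cq^n)}=\frac{1-c}{1-cq^n},$$
the quotient $(c)_n/(cq)_n$ collapses to $(1-c)/(1-cq^n)$; simultaneously $(ad)^n$ becomes $a^n$ and $(ade)_{N-n}$ becomes $(ae)_{N-n}$. Factoring the constant $1-c$ out of the sum, the left-hand side becomes
$$(1-c)\sum_{n=1}^N \begin{bmatrix} N\\ n \end{bmatrix} \frac{(q)_n (b/a)_n (q/e)_{n-1} (ae)_{N-n} a^n e^{n-1}}{(1-cq^n)(b)_n(q)_{n-1}},$$
which is exactly $1-c$ times the sum on the left of the corollary.

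Next I would simplify the right-hand side at $d=1$. The scalar prefactor $\frac{(a-b)(d-c)}{(ad-b)}$ reduces to $\frac{(a-b)(1-c)}{a-b}=1-c$, where the cancellation of $a-b$ is an identity of rational functions (and holds at $a=b$ by continuity); moreover $(ad)_N=(a)_N$. Inside the sum, $(bd/c)_{n-1}$ becomes $(b/c)_{n-1}$, the numerator factor $(a)_{n-1}$ cancels against $(ad)_{n-1}=(a)_{n-1}$ in the denominator, and each $adq^{n-1}$ becomes $aq^{n-1}$. Hence the right-hand side likewise picks up an overall factor $1-c$ multiplying precisely the sum displayed in the corollary.

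Finally, both sides now carry the common factor $1-c$; dividing through by it (valid for $c\neq1$, and for $c=1$ by continuity) gives the claimed identity. The step most worth verifying carefully is this reduction of the prefactor together with the cancellation of $(a)_{n-1}$ against $(ad)_{n-1}$ when $d=1$, since these are exactly what strip away the spurious $a-b$ and $(a)_{n-1}$ factors and leave the clean form of Corollary \ref{finite analogue of different generalization of dixit-maji}.
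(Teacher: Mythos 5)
Your proposal is correct and follows exactly the paper's route: the paper derives this corollary by simply setting $d=1$ in Theorem \ref{finite analogue of generalization of BEM's result}, and your careful tracking of the collapse $(c)_n/(cq)_n=(1-c)/(1-cq^n)$, the reduction of the prefactor to $1-c$, and the cancellation of $(a)_{n-1}$ against $(ad)_{n-1}$ fills in precisely the routine details the paper leaves implicit. Nothing further is needed.
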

Next, we state an identity that generalizes a finite analogue of Garvan's identity \eqref{finite analogue of garvan} due to Dixit, Eyyunnim Maji and Sood.
\begin{theorem}\label{finite analogue of generalization of garvan's identity} We have
\begin{align*}
&\sum_{n=1}^N \begin{bmatrix}
N\\
n
\end{bmatrix}_{q^2} \frac{(-1)^n(q^2;q^2)_n (z/d;q^2)_n z^nd^n q^{n^2}}{(z-d)(zq)_{2n}}\nonumber \\ &= \sum_{n=1}^N \begin{bmatrix}
N\\
n
\end{bmatrix}_{q^2} \left( \frac{(dq)_{2n-2} z^{2n-1} q^{n(2n-1)}}{(zq)_{2n-1}} + \frac{(dq)_{2n-1} z^{2n} q^{n(2n+1)}}{(zq)_{2n}} \right)\frac{(q^2;q^2)_n}{(dzq^{2N+1};q^2)_n}.
\end{align*}
\end{theorem}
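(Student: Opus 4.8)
The plan is to deduce Theorem~\ref{finite analogue of generalization of garvan's identity} from the finite Andrews--Garvan--Liang type identity in Corollary~\ref{finite analogue of 4-variable generalization of Andrews Garvan Liang}, following the route by which Dixit, Eyyunni, Maji and Sood obtained \eqref{finite analogue of garvan}. The first thing to notice is that setting $d=1$ collapses the theorem to \eqref{finite analogue of garvan}: the right-hand side is immediate, while on the left one uses $(z;q^2)_n=(1-z)(zq^2;q^2)_{n-1}$ together with the splitting $(zq)_{2n}=(zq;q^2)_n(zq^2;q^2)_n$ and $(zq^2;q^2)_{n-1}/(zq^2;q^2)_n=1/(1-zq^{2n})$. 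This special case dictates the substitution to make, namely a base change $q\mapsto q^2$ accompanied by a specialization of the free parameters engineered to produce the mixed-base symbol $(zq)_{2n}$.

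Concretely, I would take the $e=1$ instance of Corollary~\ref{finite analogue of 4-variable generalization of Andrews Garvan Liang} (the corollary of Dixit and Patel recorded just beneath it, \cite[Corollary 2.2]{DixitPatel}), replace $q$ by $q^2$, and then set $z\mapsto z$, $c\mapsto z/q$, $d\mapsto d/q$. Under this choice the denominator factors combine as $(zq^2;q^2)_n(cq^2;q^2)_n=(zq^2;q^2)_n(zq;q^2)_n=(zq)_{2n}$, the ratio $c/d$ becomes $z/d$, and the weight $(-zd)^nq^{n(n+1)}$ collapses to $(-1)^nz^nd^nq^{n^2}$ because the surplus factor $q^{-n}$ coming from $d\mapsto d/q$ cancels the extra power in $q^{n(n+1)}$. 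Hence the left-hand side of the specialized identity is exactly $(z-d)$ times the left-hand side of the theorem, while the right-hand prefactor $z(c-d)/c$ becomes $z-d$ as well. Cancelling $(z-d)$ throughout, and using $zdq^2/c\mapsto dq^2$, $cq^2\mapsto zq$, the theorem's left-hand side is identified with the single $q^2$-sum
\begin{align*}
S:=\sum_{n=1}^N \begin{bmatrix} N\\ n \end{bmatrix}_{q^2}\frac{(q^2;q^2)_n (zq;q^2)_{N-n}(dq^2;q^2)_{n-1}(zq)^n}{(zq^2;q^2)_n (zq;q^2)_N}.
\end{align*}

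It therefore remains to show that $S$ equals the right-hand side $T$ of the theorem. First I would recast $T$ as a single sum: using $(zq)_{2n}=(zq)_{2n-1}(1-zq^{2n})$ and $(dq)_{2n-1}=(dq)_{2n-2}(1-dq^{2n-1})$, the bracketed expression simplifies,
\begin{align*}
\frac{(dq)_{2n-2}z^{2n-1}q^{n(2n-1)}}{(zq)_{2n-1}}+\frac{(dq)_{2n-1}z^{2n}q^{n(2n+1)}}{(zq)_{2n}}=\frac{(dq)_{2n-2}z^{2n-1}q^{n(2n-1)}\bigl(1-dzq^{4n-1}\bigr)}{(zq)_{2n}},
\end{align*}
so that
\begin{align*}
T=\sum_{n=1}^N \begin{bmatrix} N\\ n \end{bmatrix}_{q^2}\frac{(dq)_{2n-2}\,z^{2n-1}q^{n(2n-1)}\bigl(1-dzq^{4n-1}\bigr)(q^2;q^2)_n}{(zq)_{2n}\,(dzq^{2N+1};q^2)_n}.
\end{align*}

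The heart of the matter is then the finite identity $S=T$ between these two terminating $q^2$-series, and I expect this to be the main obstacle, since it is precisely the step that genuinely extends the argument of Dixit, Eyyunni, Maji and Sood to a free parameter $d$ (their case being $d=1$). My approach would be to reverse the order of summation in $S$ via $n\mapsto N-n$, with the goal of turning the $N$-dependence of $S$, carried by $(zq;q^2)_{N-n}/[(q^2;q^2)_{N-n}(zq;q^2)_N]$, into the $N$-dependent factor $1/(dzq^{2N+1};q^2)_n$ visible in $T$, and then to identify the two resulting series by a transformation for terminating basic hypergeometric series (for instance a $q$-Chu--Vandermonde or $q$-Pfaff--Saalsch\"utz type summation, among the auxiliary results gathered in Section~3). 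As a robust fallback, $S=T$ can be established by induction on $N$: the base case $N=1$ reduces both sides to $zq(1-q^2)/[(1-zq)(1-zq^2)]$, and the inductive step amounts to verifying that the increments $S_N-S_{N-1}$ and $T_N-T_{N-1}$ coincide, a finite rational-function identity in $z$, $d$ and $q$. Together with the identification of the left-hand side with $S$ obtained above, this completes the proof.
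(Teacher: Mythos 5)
Your reduction of the theorem to the identity $S=T$ is correct and follows essentially the same route as the paper: the paper also specializes the $e=1$ case of Corollary~\ref{finite analogue of 4-variable generalization of Andrews Garvan Liang} with base $q\mapsto q^2$ (its substitution $z\mapsto z/q$, $c=z$ lands on the second form of $S_1$ in Lemma~\ref{generalization of lemma 9.3 Untrodden}, whereas yours lands directly on the first form --- the two are equivalent precisely by that lemma). Your single-sum recasting of $T$ with the factor $1-dzq^{4n-1}$ is also correct, and your $N=1$ check is right.

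The genuine gap is that you never prove $S=T$, and this is where all the real work of the theorem lives. In the paper this step is the content of Lemma~\ref{generalization of lemma 9.3 Untrodden} and Lemma~\ref{generalization of lemma 9.4 Untrodden}: the first is a nontrivial transformation of $S_1$ proved via the Gasper--Rahman identity \eqref{q-series identity} together with the corollary \eqref{corollary of finite heine transformation} of Andrews's finite Heine transformation, and the second identifies $S_1$ with $T$ by specializing a \emph{finite Rogers--Fine identity} (\cite[Lemma 9.2]{DEMS}) with $q\mapsto q^2$, $\alpha=d/q$, $\beta=zq$, $\tau=zq^2$. Neither of the tools you name is adequate: $S=T$ is a transformation between two terminating series of different shapes (one a ${}_3\phi_2$-type sum in base $q^2$, the other built from the mixed-base products $(dq)_{2n}$ and $(zq)_{2n}$ with the tail factor $1/(dzq^{2N+1};q^2)_n$), not a summation, so $q$-Chu--Vandermonde or $q$-Pfaff--Saalsch\"utz after reversing the sum will not close it. The induction fallback is also not a routine ``finite rational-function identity'': both summands depend on $N$ (through $(zq;q^2)_{N-n}/(zq;q^2)_N$ in $S$ and through $(dzq^{2N+1};q^2)_n$ in $T$), so $S_N-S_{N-1}$ and $T_N-T_{N-1}$ are themselves full sums whose equality is of essentially the same difficulty as the original identity. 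As written, the proposal establishes the easy half of the argument and leaves the hard half as a conjecture with two unworked suggestions.
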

Letting $N \rightarrow \infty$ in the above theorem gives a one variable generalization of an identity of Garvan \eqref{garvan}. Mainly we obtain the below identity.
\begin{theorem}
For $|dz|<1$ and $|q|<1$,
\begin{align}\label{one variable generalization to Garvan's identity equation}
\sum_{n=1}^\infty \frac{(-1)^n (z/d;q^2)_n z^nd^nq^{n^2}}{(z-d)(zq)_{2n}}=\sum_{n=1}^\infty \frac{(dq)_{n-1}z^nq^{n(n+1)/2}}{(zq)_n}.
\end{align}
\end{theorem}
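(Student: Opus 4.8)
The plan is to derive \eqref{one variable generalization to Garvan's identity equation} as the limiting case $N\to\infty$ of Theorem \ref{finite analogue of generalization of garvan's identity}. The device that makes the limit transparent is to pair each Gaussian $q^2$-binomial coefficient with the factor $(q^2;q^2)_n$ that accompanies it on both sides, using
\begin{align*}
\begin{bmatrix} N\\ n \end{bmatrix}_{q^2}(q^2;q^2)_n=\frac{(q^2;q^2)_N}{(q^2;q^2)_{N-n}}=\prod_{k=N-n+1}^{N}\bigl(1-q^{2k}\bigr).
\end{align*}
For each fixed $n$ this product tends to $1$ as $N\to\infty$. On the right-hand side the only remaining dependence on $N$ is the factor $1/(dzq^{2N+1};q^2)_n$, and since $|q|<1$ forces $q^{2N+1}\to 0$, this factor tends to $1/(0;q^2)_n=1$.

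Granting for the moment that the limit may be taken inside the two series, the left-hand side of Theorem \ref{finite analogue of generalization of garvan's identity} converges termwise to
\begin{align*}
\sum_{n=1}^{\infty}\frac{(-1)^n(z/d;q^2)_n z^n d^n q^{n^2}}{(z-d)(zq)_{2n}},
\end{align*}
which is exactly the left-hand side of \eqref{one variable generalization to Garvan's identity equation}, while the right-hand side converges termwise to
\begin{align*}
\sum_{n=1}^{\infty}\left(\frac{(dq)_{2n-2}z^{2n-1}q^{n(2n-1)}}{(zq)_{2n-1}}+\frac{(dq)_{2n-1}z^{2n}q^{n(2n+1)}}{(zq)_{2n}}\right).
\end{align*}
It then remains to recognise this as the target sum by an odd/even reindexing. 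Writing $m=2n-1$ in the first summand and $m=2n$ in the second, and noting that $n(2n-1)=(2n-1)(2n)/2$ and $n(2n+1)=(2n)(2n+1)/2$, the first summand equals the $m$-th term of $\sum_{m\ge1}(dq)_{m-1}z^m q^{m(m+1)/2}/(zq)_m$ for odd $m$ and the second for even $m$. Summing over all $n$ therefore reassembles the single series $\sum_{m=1}^{\infty}(dq)_{m-1}z^m q^{m(m+1)/2}/(zq)_m$, completing the identity.

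The substantive point, and the step I expect to require the most care, is justifying the interchange of $\lim_{N\to\infty}$ with the summations; I would do this by Tannery's theorem, producing $N$-uniform dominating bounds. The $q$-binomial factor is controlled uniformly by $\bigl|\tfrac{(q^2;q^2)_N}{(q^2;q^2)_{N-n}}\bigr|\le\prod_{k\ge1}(1+|q|^{2k})<\infty$, and the denominator $(dzq^{2N+1};q^2)_n$ is bounded away from $0$ uniformly in $N\ge 0$ and $n$ by $\prod_{k\ge0}\bigl(1-|dz|\,|q|^{2k+1}\bigr)$, which is positive precisely because $|dz|<1$ and $|q|<1$. With these bounds in hand the summands are dominated by convergent series — the Gaussian factor $q^{n^2}$ on the left and the super-exponential decay $|q|^{m(m+1)/2}$ on the right swamp the uniformly bounded Pochhammer ratios — so Tannery's theorem applies and the termwise limits computed above are legitimate.
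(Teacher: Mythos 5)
Your proposal is correct and takes essentially the same route as the paper, which obtains the identity by letting $N\to\infty$ in Theorem \ref{finite analogue of generalization of garvan's identity}; you simply supply the details (the odd/even reindexing of the right-hand side and the Tannery-type justification) that the paper leaves implicit.
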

Upon substituting $d=1$ in \eqref{one variable generalization to Garvan's identity equation}, one can easily get Garvan's identity \eqref{garvan}.

\section{Preliminary tools}
In this section, we gather all the necessary results that will be usefull through the paper.

For $|z|<1$, the $q-$binomial theorem is given by 
\begin{align}\label{q-Binomial theorem}
\sum_{n=0}^{\infty} \frac{(a)_nz^n}{(q)_n}=\frac{(az)_{\infty}}{(z)_{\infty}}.
\end{align}

Basic hypergeometric series  ${}_{r+1}\phi_{r}$ is define as,
\begin{align*}
{}_{2}\phi_{1}\left[ \begin{matrix} a_1, & a_2, & \cdots, &a_{r+1}  \\ b_1, &b_2, &\cdots, & b_r \end{matrix}  ; q; z \right] = \sum_{n=0}^{\infty} \frac{(a_1)_n(a_2)_n \cdots (a_{r+1})_n}{(b_1)_n(b_2)_n \cdots (b_r)_n}\frac{z^n}{(q)_n}.
\end{align*}

We recall famous Heine transformation \cite[p.~359, (III.1)]{GasperRahman}
\begin{align}\label{heine transformation}
{}_{2}\phi_{1}\left[ \begin{matrix} a, & b  \\ & c \end{matrix}  ; q; z \right]=\frac{(az)_{\infty}(b)_{\infty}}{(z)_{\infty}(c)_{\infty}}{}_{2}\phi_{1}\left[ \begin{matrix} \frac{c}{b}, & z  \\ & az \end{matrix}  ; q; b \right].
\end{align}

The $q$-Gauss sum is
\begin{align}\label{q gauss sum}
{}_{2}\phi_{1}\left[ \begin{matrix} a, & b  \\ & c \end{matrix}  ; q; \frac{c}{ab}\right] = \frac{(c/a)_\infty (c/b)_\infty}{(c)_\infty (c/ab)_\infty}.
\end{align}

We shall also recall two ${}_{3}\phi_{2}$ transformation formula in \cite[p.~359, (III.9),p.~360, (III.12)]{GasperRahman}:
\begin{align}\label{3_phi_2}
{}_{3}\phi_{2}\left[\begin{matrix} A, & B, & C \\ & D, & E \end{matrix} \, ; q, \frac{DE}{ABC}  \right] &= \frac{\big(\frac{E}{A}\big)_\infty\big( \frac{DE}{BC}\big)_{\infty }}{\big(E\big)_\infty \big(\frac{DE}{ABC}\big)_{\infty}} 
{}_{3}\phi_{2}\left[\begin{matrix}A,& \frac{D}{B} , & \frac{D}{C} \\
& D,& \frac{DE}{BC} \end{matrix} \, ; q, \frac{E}{A} \right], \\
{}_{3}\phi_{2}\left[\begin{matrix} q^{-N}, & B, & C \\ & D, & E \end{matrix} \, ; q, q  \right] &= \frac{\big(\frac{E}{C}\big)_N}{\big(E\big)_N}c^N {}_{3}\phi_{2}\left[\begin{matrix} q^{-N},& C, & \frac{D}{B} \\& D,& \frac{Cq^{1-N}}{E} \end{matrix} \, ; q, q \right]. \label{3_phi_2 finite}
\end{align}

A finite Heine transformation given by Andrews \cite[Theorem 2]{andrews07} is
\begin{align}\label{finite Heine transformation}
{}_{3}\phi_{2}\left[\begin{matrix} q^{-N}, & \alpha, & \beta \\ & \gamma, & \frac{q^{1-N}}{\tau} \end{matrix} \, ; q, q  \right] = \frac{\left(\beta \right)_N (\alpha \tau)_N}{(\gamma)_N (\tau)_N} 
{}_{3}\phi_{2}\left[\begin{matrix}q^{-N},& \frac{\gamma}{\beta} , & \tau \\
& \alpha \tau,& \frac{q^{1-N}}{\beta} \end{matrix} \, ; q, q \right].
\end{align}

We also need a corollary of \eqref{finite Heine transformation}, given below \cite[Corollary~3,~Equation~(2.7)]{andrews07}
\begin{align}\label{corollary of finite heine transformation}
{}_{3}\phi_{2}\left[\begin{matrix} q^{-N}, & \alpha, & \beta \\ & \gamma, & \frac{q^{1-N}}{\tau} \end{matrix} \, ; q, q  \right] = \frac{\left(\frac{\gamma}{\beta} \right)_N (\beta \tau)_N}{(\gamma)_N (\tau)_N} 
{}_{3}\phi_{2}\left[\begin{matrix}q^{-N},& \frac{\alpha \beta \tau}{\gamma} , & \beta \\
& \beta \tau,& \frac{\beta q^{1-N}}{\gamma} \end{matrix} \, ; q, q \right].
\end{align}

Some basic formula from \cite[Equation (4.1), (4.2)]{DEMS} stated below.
\begin{align}
\left(\frac{q^{-N}}{x}\right)_n &=\frac{(-1)^n \left( xq^{N-n+1}\right)_n}{x^nq^{Nn}}q^{\frac{n(n-1)}{2}},\label{basic formula 1} \\
\frac{(q^{-N})_n}{\left(\frac{q^{-N}}{x}\right)_n} &=\frac{(q^{N-n+1})_n x^n}{(x q^{N-n+1})_n}=\frac{(q)_N (x q)_{N-n}}{(q)_{N-n}(x q)_N} x^n. \label{basic formula 2}
\end{align}

We also state a result of Corteel and Lovejoy \cite[Corollary 1.2]{overp}
\begin{align}\label{overpartition identity}
\sum_{n=0}^N \begin{bmatrix}
N\\n
\end{bmatrix} \frac{(-1/a)_n (ac)^n q^{n(n+1)/2}}{(cq)_n} = \frac{(-acq)_N}{(cq)_N}.
\end{align}

In the next section, we present a proof of our main results.

\section{Proof of Main results}

\begin{proof}[Theorem \ref{generalization of BEM's result}][]
Let $A=\frac{q}{e},~B=\frac{bq}{a},~C=\frac{cq}{d},~D=bq, $ and $E=cq^2$ in \eqref{3_phi_2} so that
\begin{align*}
\sum_{n=0}^\infty \frac{(q/e)_n (bq/a)_n (cq/d)_n (ade)^n }{(bq)_n (cq^2)_n (q)_n} = \frac{(ceq)_\infty (adq)_\infty }{ (cq^2)_\infty (ade)_\infty } \sum_{n=0}^\infty \frac{ (q/e)_n (a)_n (bd/c)_n (ceq)^n }{ (bq)_n (adq)_n (q)_n }.
\end{align*}
Re-indexing the summation on left hand side and then multiplying both sides by $\frac{(1-b/a)(1-c/d) ad }{(1-b)(1-cq)}$, we get
\begin{align*}
&\sum_{n=1}^\infty \frac{ (b/a)_n (c/d)_n (q/e)_{n-1} (ad)^n e^{n-1} }{ (b)_n (cq)_n (q)_{n-1}}\\
&= \frac{(a-b)(d-c) }{(1-b)(1-ad)} \frac{(ceq)_\infty (ad)_\infty }{ (cq)_\infty (ade)_\infty }  \sum_{n=0}^\infty \frac{ (q/e)_n (a)_n (bd/c)_n (ceq)^n }{ (bq)_{n} (adq)_n (q)_n }\\
&= \frac{(a-b)(d-c) }{(ad-b)} \frac{(ceq)_\infty (ad)_\infty }{ (cq)_\infty (ade)_\infty }  \sum_{n=0}^\infty \frac{ (a)_n (bd/c)_n (q/e)_n (ce)^n }{ (b)_{n} (ad)_n (q)_n } \left(\frac{(ad-b)q^n}{(1-bq^n)(1-adq^n)}\right)\\
&= \frac{(a-b)(d-c) }{(ad-b)} \frac{(ceq)_\infty (ad)_\infty }{ (cq)_\infty (ade)_\infty }  \sum_{n=0}^\infty \frac{ (a)_n (bd/c)_n (q/e)_n (ce)^n }{ (b)_{n} (ad)_n (q)_n } \left(\frac{adq^n}{1-adq^n}-\frac{bq^n}{1-bq^n}\right).
\end{align*}
This proves the result.
\end{proof}

\begin{proof}[Theorem \ref{finite analogue of generalization of BEM's result}][]
We know that from \cite[p.~70, Equation (3.2.1)]{GasperRahman}
\begin{align*}
{}_{4}\phi_{3}\left[\begin{matrix} q^{-N}, &  A, & B, & C \\ & D, & E, & \frac{ABCq^{1-N}}{DE} \end{matrix} \, ; q, q  \right] = \frac{\big(\frac{E}{A}\big)_N\big( \frac{DE}{BC}\big)_{N}}{\big(E\big)_N \big(\frac{DE}{ABC}\big)_{N}} 
{}_{4}\phi_{3}\left[\begin{matrix} q^{-N}, & A, & \frac{D}{B} , & \frac{D}{C} \\
& D,& \frac{DE}{BC}, & \frac{Aq^{1-N}}{E} \end{matrix} \, ; q, q \right].
\end{align*}
Substituting $A=\frac{q}{e},~B=\frac{bq}{a},~C=\frac{cq}{d},~D=bq, $ and $E=cq^2$ in the above identity, we get
\begin{align*}
\sum_{n=0}^N \frac{\left(q^{-N}\right)_n (q/e)_n (bq/a)_n (cq/d)_n }{ (bq)_n (cq^2)_n (q^{1-N}/ade)_n (q)_n }q^n = \frac{(ceq)_N (adq)_N }{ (cq^2)_N (ade)_N } \sum_{n=0}^N \frac{ \left(q^{-N}\right)_n (q/e)_n (a)_n (bd/c)_n }{ (bq)_n (adq)_n (q^{-N}/ce)_n (q)_n } q^n.
\end{align*}
Now we use \eqref{basic formula 2} with $x=ade/q$ for left hand side and with $x=ce$ for the right hand side of the aforementioned expression to see that
\begin{align*}
\sum_{n=0}^N &\frac{(q)_N (ade)_{N-n} (q/e)_n (bq/a)_n (cq/d)_n (ade)^n }{ (q)_{N-n}  (bq)_n (cq^2)_n (q)_n }= \frac{ (adq)_N }{ (cq^2)_N } \sum_{n=0}^N \frac{ (q)_N (ceq)_{N-n} (q/e)_n (a)_n (bd/c)_n (ceq)^n }{ (q)_{N-n} (bq)_n (adq)_n (q)_n }.
\end{align*}
Re-indexing the sums of both sides by replacing $n$ by $n-1$ and then multiplying the resultant by $\frac{(1-q^N)(1-b/a)(1-c/d)ad}{(1-b)(1-cq)}$ yields
\begin{align*}
&\sum_{n=1}^{N+1} \begin{bmatrix}
N+1\\
n
\end{bmatrix}
\frac{(q)_n(ade)_{N-n+1} (q/e)_{n-1} (b/a)_n (c/d)_n (ad)^n e^{n-1} }{ (b)_n (cq)_n (q)_{n-1} }\\
&= (a-b)(d-c)\frac{(adq)_N }{ (cq)_{N+1}} \sum_{n=1}^{N+1} \begin{bmatrix}
N+1\\
n
\end{bmatrix} \frac{(q)_{n} (ceq)_{N-n+1} (q/e)_{n-1} (a)_{n-1} (bd/c)_{n-1} (ceq)^{n-1} }{ (b)_n (adq)_{n-1} (q)_{n-1}}\\
&=\frac{(a-b)(d-c)}{(ad-b)} \frac{ (ad)_{N+1} }{ (cq)_{N+1}} \sum_{n=1}^{N+1} \begin{bmatrix}
N+1\\
n
\end{bmatrix} \frac{(q)_{n} (a)_{n-1} (bd/c)_{n-1} (ceq)_{N-n+1} (q/e)_{n-1} (ce)^{n-1} }{ (b)_{n-1} (ad)_{n-1} (q)_{n-1}}\\
&\hspace{7cm}\times \left(\frac{adq^{n-1}}{1-adq^{n-1}} - \frac{bq^{n-1}}{1-bq^{n-1}} \right).
\end{align*}
Now replace $N$ by $N-1$ to get the desired result.
\end{proof}

\section{One variable generalization of five entries of Ramanujan and their finite analogue}
In this section, our aim is to derive a one variable generalization of each of five entries of Ramanujan and their corresponding finite analogues. We will start with one variable generalization of Entry 1 \eqref{entry 1}.

\begin{theorem}[One variable generalization of Entry 1]\label{one variable generalization of entry 1}
For $|ae|<1$, we have
\begin{align}\label{one variable generalization of entry 1 equation}
\frac{(-a q)_\infty (be)_\infty}{(-ae)_\infty  (b q)_\infty}=\sum_{n=0}^{\infty} \frac{ (-1)^n (-b/a)_n (q/e)_n (ae)^{n}}{(q)_{n}(b q)_{n}}.
\end{align}
\end{theorem}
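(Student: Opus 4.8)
The plan is to recognize the right-hand side of \eqref{one variable generalization of entry 1 equation} as a single basic hypergeometric ${}_2\phi_1$ and then to collapse it using the Heine transformation \eqref{heine transformation} followed by the $q$-binomial theorem \eqref{q-Binomial theorem}. First I would absorb the sign into the argument, writing $(-1)^n(ae)^n=(-ae)^n$, so that
\[
\sum_{n=0}^{\infty} \frac{ (-1)^n (-b/a)_n (q/e)_n (ae)^{n}}{(q)_{n}(b q)_{n}}={}_{2}\phi_{1}\left[ \begin{matrix} -b/a, & q/e  \\ & bq \end{matrix}  ; q; -ae \right],
\]
which converges for $|ae|<1$, exactly the stated hypothesis.

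Next I would apply the Heine transformation \eqref{heine transformation} with $A=-b/a$, $B=q/e$, $C=bq$, and $Z=-ae$. The point of this choice is that the quantities governing the transformed summand simplify dramatically: one computes $AZ=(-b/a)(-ae)=be$ and $C/B=bq\cdot(e/q)=be$, so the transformed series is ${}_{2}\phi_{1}\left[ \begin{matrix} be, & -ae \\ & be \end{matrix};q;q/e\right]$, in which the upper parameter $be$ coincides with the lower parameter $be$ and therefore cancels term by term. Consequently Heine yields
\[
{}_{2}\phi_{1}\left[ \begin{matrix} -b/a, & q/e  \\ & bq \end{matrix}  ; q; -ae \right]=\frac{(be)_\infty (q/e)_\infty}{(-ae)_\infty (bq)_\infty}\sum_{n=0}^\infty \frac{(-ae)_n}{(q)_n}\left(\frac{q}{e}\right)^n .
\]

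The remaining sum is a ${}_1\phi_0$, so the $q$-binomial theorem \eqref{q-Binomial theorem} with $a\mapsto -ae$ and $z\mapsto q/e$ evaluates it as $\dfrac{(-aq)_\infty}{(q/e)_\infty}$. Substituting this back, the factor $(q/e)_\infty$ cancels and I am left with $\dfrac{(-aq)_\infty (be)_\infty}{(-ae)_\infty (bq)_\infty}$, which is precisely the left-hand side. As a sanity check, letting $e\to 0$ (using $(q/e)_n(ae)^n\to(-1)^n a^n q^{n(n+1)/2}$) collapses the prefactor $(be)_\infty/(-ae)_\infty$ to $1$ and returns Ramanujan's Entry 1 \eqref{entry 1}, confirming that the statement is indeed a one-variable generalization.

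I expect the only genuine obstacle to be convergence bookkeeping rather than any algebraic step. Both the Heine step and the $q$-binomial evaluation require $|q/e|<1$, which is not implied by the hypothesis $|ae|<1$. I would therefore first establish the identity on the overlap region where both $|ae|<1$ and $|q/e|<1$ hold, and then invoke analytic continuation in $e$: both sides of \eqref{one variable generalization of entry 1 equation} are analytic functions of $e$ on a common domain (away from the zeros of the denominators), so agreement on an open set forces agreement throughout the region where both sides are defined.
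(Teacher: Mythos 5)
Your proof is correct, but it takes a genuinely different route from the paper. The paper obtains this theorem as a consequence of its master identity (Theorem \ref{generalization of BEM's result}): it specializes that theorem by letting $a\rightarrow 0$ and setting $d=1/q$, $c=-a/q$, which produces the intermediate identity \eqref{1-calculation for one variable gen of entry 1}, and then evaluates the residual sum $\sum_{n\geq 0}(-1)^n(q/e)_n b^n e^n q^{n(n-1)/2}/((bq)_n(q)_n)$ as $(be)_\infty/(bq)_\infty$ via a limiting case of the $q$-Gauss sum \eqref{q gauss sum}. You instead work directly on the right-hand side: writing it as ${}_{2}\phi_{1}\left[\begin{smallmatrix} -b/a, & q/e \\ & bq\end{smallmatrix};q;-ae\right]$, applying Heine \eqref{heine transformation} so that the upper parameter $c/b=be$ cancels against the new lower parameter $az=be$, and finishing with the $q$-binomial theorem \eqref{q-Binomial theorem}. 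Your computation checks out ($az=be$, $c/b=be$, and the remaining ${}_1\phi_0$ evaluates to $(-aq)_\infty/(q/e)_\infty$), and your handling of the convergence issue is appropriate: the overlap region where $|ae|<1$ and $|q/e|<1$ both hold is a nonempty open subset of the connected domain $\{|ae|<1\}$ on which both sides are analytic (note $(q/e)_n(ae)^n=a^n\prod_{k=1}^{n}(e-q^{k})$ is polynomial in $e$), so analytic continuation closes the gap --- and in fact the paper's own $q$-Gauss step, performed as a limit $\alpha\to\infty$ of ${}_{2}\phi_{1}(q/e,b/\alpha;bq;q,\alpha e)$, quietly faces the same bookkeeping. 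What each approach buys: the paper's derivation exhibits the theorem as a corollary of the unifying four-parameter identity, which is the structural point of the article, whereas yours is shorter, self-contained, and uses only the two classical tools already listed in Section 3.
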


\begin{proof}
Letting $a$ tends to $0$ and then substituting $d=1/q$ and $c=-a/q$ in Theorem \ref{generalization of BEM's result}, we get

\begin{align*}
\sum_{n=1}^\infty \frac{ (-1)^{n-1} (q/e)_{n-1} b^n e^{n-1} q^{n(n-3)/2}}{(b)_n (q)_{n-1}} = \frac{b(1+a)}{q(1-b)}  \frac{(-ae)_\infty }{(-a)_\infty } \sum_{n=0}^\infty \frac{ (-1)^{n} (-b/a)_n (q/e)_n (ae)^n}{(bq)_{n} (q)_n}.
\end{align*}
Multiplying both sides by $\frac{q(1-b) (-a)_\infty}{b(1+a)(-ae)_\infty}$ and then re-indexing the left side sum of the resulting identity gives
\begin{align}\label{1-calculation for one variable gen of entry 1}
\frac{(-aq)_\infty}{(-ae)_\infty} \sum_{n=0}^\infty \frac{ (-1)^{n} (q/e)_{n} b^{n} e^{n} q^{n(n-1)/2}}{(bq)_{n} (q)_{n}} = \sum_{n=0}^\infty \frac{ (-1)^{n} (-b/a)_n (q/e)_n (ae)^n}{(bq)_{n} (q)_n}.
\end{align}
The left hand side sum of the above expression can be written as
\begin{align*}
\sum_{n=0}^\infty \frac{ (-1)^{n} (q/e)_{n} b^{n} e^{n} q^{n(n-1)/2}}{(bq)_{n} (q)_{n}} = \lim_{\alpha \rightarrow \infty} {}_{2}\phi_{1}\left[ \begin{matrix} q/e, & b/\alpha  \\ & bq \end{matrix}  ; q; \alpha e \right].
\end{align*}
Apply $q$-Gauss sum \eqref{q gauss sum} in the right hand side of the above identity to see that 
\begin{align}\label{2-calculation for one variable gen of entry 1}
\sum_{n=0}^\infty \frac{ (-1)^{n} (q/e)_{n} b^{n} e^{n} q^{n(n-1)/2}}{(bq)_{n} (q)_{n}} = \frac{(be)_\infty}{(bq)_\infty}.
\end{align}
Employing \eqref{2-calculation for one variable gen of entry 1} in \eqref{1-calculation for one variable gen of entry 1}, the proof of \eqref{one variable generalization of entry 1 equation} follows.
\end{proof}
\begin{remark}
It can be easily observed that letting $e \rightarrow 0$ in Theorem \ref{one variable generalization of entry 1} gives Ramanujan's identity i.e.,  Entry 1 \eqref{entry 1}.
\end{remark}

A finite analogue of Theorem \ref{one variable generalization of entry 1} is given below.

\begin{theorem}[Finite analogue of Entry 1]\label{finite analogue of one variable generalization of entry 1}
For $N\in \mathbb{N}$, we have
\begin{align}\label{finite analogue of one variable generalization of entry 1 equation}
\frac{(-a q)_N (be)_N}{(b q)_N}=\sum_{n=0}^{N} \begin{bmatrix}
N\\n
\end{bmatrix} \frac{ (-1)^n (-ae)_{N-n} (-b/a)_n (q/e)_n (ae)^{n}}{(b q)_{n}}.
\end{align}
\end{theorem}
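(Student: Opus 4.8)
The plan is to imitate the proof of Theorem \ref{one variable generalization of entry 1} at the finite level, using Theorem \ref{finite analogue of generalization of BEM's result} in place of the infinite identity \eqref{bem}, and the Corteel--Lovejoy identity \eqref{overpartition identity} in place of the $q$-Gauss sum \eqref{q gauss sum}. Concretely, in Theorem \ref{finite analogue of generalization of BEM's result} I would let the parameter $a$ tend to $0$ and then set $d=1/q$ and $c=-a/q$ (recycling $a$ for the Entry~1 variable), which is exactly the specialization that produced \eqref{one variable generalization of entry 1 equation} in the infinite case.

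Under this specialization most factors collapse: since $ad\to 0$ and $ade\to 0$, one has $(ad)_{N-n}=(ade)_{N-n}=(a)_{n-1}=(ad)_{n-1}=1$; the numerator factor $(c/d)_n=(-a)_n$ cancels the denominator factor $(cq)_n=(-a)_n$ on the left-hand sum; and the weight $\frac{adq^{n-1}}{1-adq^{n-1}}-\frac{bq^{n-1}}{1-bq^{n-1}}$ degenerates to $-\frac{bq^{n-1}}{1-bq^{n-1}}$ on the right. The crucial bookkeeping observation is that $(ceq)_{N-n}=(-ae)_{N-n}$, so the finite tail $(-ae)_{N-n}$ demanded by \eqref{finite analogue of one variable generalization of entry 1 equation} appears automatically from the right-hand sum of Theorem \ref{finite analogue of generalization of BEM's result}, while $(bd/c)_{n-1}=(-b/a)_{n-1}$ supplies the $(-b/a)_n$ after an index shift. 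Performing the elementary Pochhammer simplifications $\frac{(q)_n}{(q)_{n-1}}=1-q^n$ and $(b)_{n-1}(1-bq^{n-1})=(b)_n$, re-indexing $n\mapsto n-1$ in both sums, and finally replacing $N$ by $N-1$ (precisely as at the end of the proof of Theorem \ref{finite analogue of generalization of BEM's result}) should bring the identity to the form
\begin{align*}
(-aq)_N\sum_{n=0}^{N} \begin{bmatrix} N\\n \end{bmatrix} \frac{(-1)^n (q/e)_n (be)^n q^{n(n-1)/2}}{(bq)_n}=\sum_{n=0}^{N} \begin{bmatrix} N\\n \end{bmatrix} \frac{(-1)^n (-ae)_{N-n}(-b/a)_n (q/e)_n (ae)^n}{(bq)_n},
\end{align*}
where the constant $(-aq)_N$ emerges after reconciling the leftover prefactors through telescoping identities such as $(-a)_N=(1+a)(-aq)_{N-1}$.

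It then remains to evaluate the left-hand auxiliary sum in closed form, which is the finite analogue of the $q$-Gauss step. Here I would invoke \eqref{overpartition identity} with the specialization $a\mapsto -e/q$ and $c\mapsto b$: a direct check gives $(-1/a)_n=(q/e)_n$, $(ac)^n q^{n(n+1)/2}=(-1)^n(be)^n q^{n(n-1)/2}$, and $\frac{(-acq)_N}{(cq)_N}=\frac{(be)_N}{(bq)_N}$, so that the auxiliary sum equals $\frac{(be)_N}{(bq)_N}$. Substituting this back produces $(-aq)_N\cdot\frac{(be)_N}{(bq)_N}$ on the left, which is precisely the claimed $\frac{(-aq)_N(be)_N}{(bq)_N}$, completing the proof.

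I expect the main obstacle to be the index/shift bookkeeping rather than any deep idea: one must carefully track the two finite Pochhammer tails, $(-ae)_{N-n}$ and the implicit $(q)_{N-n}$ inside the $q$-binomial coefficient, through the $n\mapsto n-1$ re-indexing and the terminal $N\mapsto N-1$ replacement, so that the binomial coefficients realign to $\begin{bmatrix} N\\n\end{bmatrix}$ over $0\le n\le N$. Pinning down the correct argument in the Corteel--Lovejoy specialization (so that its right-hand side is exactly $(be)_N/(bq)_N$ rather than some shifted variant) is the only genuinely delicate point; once that match is secured, everything else is routine $q$-Pochhammer manipulation.
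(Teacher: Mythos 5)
Your proposal follows the paper's proof essentially verbatim: the same specialization ($a\to 0$, then $d=1/q$, $c=-a/q$) of Theorem \ref{finite analogue of generalization of BEM's result}, the same re-indexing and simplification to the intermediate identity with the auxiliary sum $\sum_n \begin{bmatrix} N\\n\end{bmatrix}(q/e)_n(-be)^nq^{n(n-1)/2}/(bq)_n$, and the same closing application of \eqref{overpartition identity} with $a=-e/q$, $c=b$. The only discrepancy is a trivial one of bookkeeping direction (the paper shifts $N\mapsto N+1$ after re-indexing rather than $N\mapsto N-1$), which does not affect the argument.
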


\begin{proof}
Letting $a \rightarrow 0$, then substituting $d = 1/q$ and $c = -a/q$ in \eqref{finite analogue of generalization of BEM's result}, we get
\begin{align*}
\sum_{n=1}^N  \frac{(q)_N (q/e)_{n-1} (-be)^{n-1} q^{(n-1)(n-2)/2} }{(q)_{N-n} (b)_n (q)_{n-1}} = \sum_{n=1}^N \frac{(q)_N (-ae)_{N-n} (-b/a)_{n-1} (q/e)_{n-1} (-ae)^{n-1}}{ (q)_{N-n} (-aq)_{N-1} (b)_{n} (q)_{n-1}}.
\end{align*}
Re-indexing both sums above and then replacing $N$ by $N+1$ yields
\begin{align*}
\sum_{n=0}^N  \frac{(q)_{N+1} (q/e)_{n} (-be)^{n} q^{n(n-1)/2} }{(q)_{N-n} (b)_{n+1} (q)_{n}} = \sum_{n=0}^N \frac{(q)_{N+1} (-ae)_{N-n} (-b/a)_{n} (q/e)_{n} (-ae)^{n}}{ (q)_{N-n} (-aq)_{N} (b)_{n+1} (q)_{n}}.
\end{align*}
Multiply both sides by $\frac{1-b}{1-q^{N+1}}$ in the above equation and then further simplifying, one gets
\begin{align*}
(-aq)_{N} \sum_{n=0}^N \begin{bmatrix}
N\\n
\end{bmatrix}  \frac{ (q/e)_{n} (-be)^{n} q^{n(n-1)/2} }{ (bq)_{n} } = \sum_{n=0}^N \begin{bmatrix}
N\\n
\end{bmatrix} \frac{ (-ae)_{N-n} (-b/a)_{n} (q/e)_{n} (-ae)^{n}}{ (bq)_{n} }.
\end{align*}
Now the result follows by using \eqref{overpartition identity} with $a=-e/q,~c=b$ in the above identity.
\end{proof}
Allowing $N \rightarrow \infty$ in Theorem \ref{finite analogue of one variable generalization of entry 1} gives Theorem \ref{one variable generalization of entry 1}. Also, letting $e \rightarrow 0$ in \eqref{finite analogue of one variable generalization of entry 1 equation} gives
\begin{align}\label{1st step to dixit patel's finite analogue of entry 1}
\frac{(-a q)_N}{(b q)_N}=\sum_{n=0}^{N} \begin{bmatrix}
N\\n
\end{bmatrix} \frac{ (-b/a)_n a^n q^{n(n+1)/2}}{(b q)_{n}}.
\end{align}
By doing a simple calculation, one can observe that
\begin{align}\label{2nd step to dixit patel's finite analogue of entry 1}
\frac{(-a q)_N}{(b q)_N}= \frac{\left(q^{-N}/a\right)_N}{\left(q^{-N}/b\right)_N} \left(\frac{-a}{b} \right)^N.
\end{align}
Apply $q$-Chu-Vandermonde sum \cite[p.~354,~(II.6)]{GasperRahman} in the right side of the above expression to get
\begin{align*}
\frac{\left(q^{-N}/a\right)_N}{\left(q^{-N}/b\right)_N} \left(\frac{-a}{b} \right)^N &= {}_{2}\phi_{1}\left[ \begin{matrix} -a/b, & q^{-N}  \\ & q^{-N}/b \end{matrix}  ; q; z \right] \\
&=\sum_{n=0}^N \frac{(q^{-N})_n (-a/b)_n}{(q^{-N}/b)_n (q)_n}q^n.
\end{align*}
Apply \eqref{basic formula 2} with $x=b$ in the above expression to obtain
\begin{align}\label{3rd step to dixit patel's finite analogue of entry 1}
\frac{\left(q^{-N}/a\right)_N}{\left(q^{-N}/b\right)_N} \left(\frac{-a}{b} \right)^N &= \sum_{n=0}^N \frac{(q)_{N} (bq)_{N-n} (-a/b)_n}{(q)_{N-n} (bq)_N (q)_n} (bq)^n \nonumber \\
&=\sum_{n=0}^N \begin{bmatrix}
N\\n
\end{bmatrix} \frac{ (-a/b)_n (bq)_{N-n} (bq)^n}{(bq)_N}.
\end{align}
Upon employing \eqref{2nd step to dixit patel's finite analogue of entry 1} and \eqref{3rd step to dixit patel's finite analogue of entry 1} in \eqref{1st step to dixit patel's finite analogue of entry 1} gives a finite analogue of Entry 1 due to Dixit and Patel \cite[Theorem 3.1]{DixitPatel}.

The following identity is one variable generalization of Entry 2 \eqref{entry 2}.

\begin{theorem}[One variable generalization of Entry 2]\label{one variable generalization of entry 2}
For $|ae|<1$, we have
\begin{align}\label{one variable generalization of entry 2 equation}
\frac{(a q)_\infty }{(ae)_\infty } \sum_{n=1}^\infty \frac{(-1)^n n (q/e)_n (ae)^n q^{n(n-1)/2}}{(aq)_n (q)_n}=-\sum_{n=1}^{\infty} \frac{(q/e)_n (ae)^n}{1-q^n}.
\end{align}
\end{theorem}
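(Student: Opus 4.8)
The plan is to manufacture \eqref{one variable generalization of entry 2 equation} by differentiating the closed-form evaluation that already appears inside the proof of Theorem \ref{one variable generalization of entry 1}. Writing \eqref{2-calculation for one variable gen of entry 1} with $b$ renamed to $a$,
\begin{align*}
S(a):=\sum_{n=0}^\infty \frac{(-1)^n (q/e)_n (ae)^n q^{n(n-1)/2}}{(aq)_n (q)_n}=\frac{(ae)_\infty}{(aq)_\infty},
\end{align*}
one sees that the summand here is exactly the summand of the left side of \eqref{one variable generalization of entry 2 equation} with the factor $n$ removed, while $S(a)$ itself is the reciprocal of the prefactor $(aq)_\infty/(ae)_\infty$. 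This is what singles $S(a)$ out: the homogeneity operator $\theta:=a\,\frac{d}{da}$ pulls the missing $n$ down from the power $(ae)^n$, so I expect $\theta S$ to reproduce the target sum up to one controllable correction.

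Carrying $\theta$ through the series gives $\theta S=\Sigma(a)+D(a)$, where
\begin{align*}
\Sigma(a):=\sum_{n=1}^\infty \frac{(-1)^n\, n\,(q/e)_n (ae)^n q^{n(n-1)/2}}{(aq)_n (q)_n}
\end{align*}
is precisely the sum in \eqref{one variable generalization of entry 2 equation}, and $D(a)$ is the extra series created when $\theta$ lands on $(aq)_n^{-1}$, carrying the inner weight $\sum_{j=1}^n \frac{aq^j}{1-aq^j}$. On the product side I would instead differentiate the closed form: $\theta\log\frac{(ae)_\infty}{(aq)_\infty}$ is a logarithmic derivative whose two constituents can be reorganized by the elementary rearrangement $\sum_{j\ge 1}\frac{aq^j}{1-aq^j}=\sum_{m\ge 1}\frac{(aq)^m}{1-q^m}$ (and its analogue with $ae$ in place of $aq$) into the single Lambert-type series $\sum_{m\ge 1}\frac{(aq)^m-(ae)^m}{1-q^m}$. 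This rearrangement is the heart of the matter, since it converts the Lambert series in the variable $a$ that differentiation naturally produces into the Lambert series in $q^m$ that the right side of \eqref{one variable generalization of entry 2 equation} demands. Equating the two computations of $\theta S$ and dividing by $S(a)$ then restores the prefactor $(aq)_\infty/(ae)_\infty$.

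The hard part will be the correction term $D(a)$: the identity to be proved is equivalent to the single evaluation
\begin{align*}
D(a)=S(a)\left(\sum_{m\ge 1}\frac{(aq)^m-(ae)^m}{1-q^m}+\sum_{n\ge 1}\frac{(q/e)_n (ae)^n}{1-q^n}\right),
\end{align*}
so all of the difficulty is concentrated there. I would tackle $D(a)$ by interchanging its two summations, rewriting the inner weight as $-\theta\log(aq)_n$, and re-expressing the resulting tails of $S$ through the shifted confluent $q$-Gauss evaluation used in Section 4; a Heine or ${}_3\phi_2$ transformation from Section 3, such as \eqref{heine transformation} or \eqref{3_phi_2}, should then collapse $D(a)$ to the stated combination. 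A cleaner alternative, parallel to the treatment of Entry 1, is to avoid differentiation altogether and substitute a suitable specialization (together with a confluent limit) directly into Theorem \ref{generalization of BEM's result}, reading off \eqref{one variable generalization of entry 2 equation} from the master identity. In either approach the final sanity check is the limit $e\to 0$, under which $(q/e)_n (ae)^n q^{n(n-1)/2}\to(-1)^n a^n q^{n^2}$ and $(q/e)_n (ae)^n\to(-1)^n a^n q^{n(n+1)/2}$, returning the classical Entry 2 \eqref{entry 2}.
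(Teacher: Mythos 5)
Your setup is correct as far as it goes: the evaluation $S(a)=(ae)_\infty/(aq)_\infty$ is indeed equation \eqref{2-calculation for one variable gen of entry 1} with $b$ renamed, the decomposition $\theta S=\Sigma(a)+D(a)$ is right, the logarithmic differentiation of the product side and the Lambert rearrangement are right, and your reformulation of the theorem as the single evaluation of $D(a)$ is algebraically equivalent to \eqref{one variable generalization of entry 2 equation}. But that is exactly the problem: you have not proved the theorem, you have restated it. The series
\begin{align*}
D(a)=\sum_{n=1}^\infty \frac{(-1)^n (q/e)_n (ae)^n q^{n(n-1)/2}}{(aq)_n (q)_n}\sum_{j=1}^{n}\frac{aq^{j}}{1-aq^{j}}
\end{align*}
carries an inner harmonic-type weight, and the claim that ``a Heine or ${}_3\phi_2$ transformation should then collapse $D(a)$'' is precisely the step where all the content lives; nothing in Section 3 applies to a series with such a weight without substantial further work. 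As written, the proposal has a genuine gap at its central step.

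The paper avoids this difficulty by differentiating with respect to a \emph{different} parameter. Setting $d\to 0$ and then $b=0$ in Theorem \ref{generalization of BEM's result} produces an identity whose right-hand side contains $(a)_n$ and whose left-hand side contains $a$ only through the pure power $a^{n}$ (after normalizing). Differentiating in $a$ and then letting $a\to 1$ is the key manoeuvre: since $(a)_n\to 0$, only the term where the derivative hits the factor $(1-a)$ survives, and
\begin{align*}
\lim_{a\to 1}(a)_n\sum_{k=0}^{n-1}\frac{q^{k}}{1-aq^{k}}=(q)_{n-1}
\end{align*}
converts $(q)_{n-1}/(q)_n$ into the desired $1/(1-q^{n})$, while the pure power $a^{n}$ on the other side yields the factor $n$. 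Because the differentiation variable is specialized to a value where the undifferentiated series is annihilated, no analogue of your $D(a)$ ever appears. Your ``cleaner alternative'' of specializing Theorem \ref{generalization of BEM's result} directly points in the right direction, but without identifying the specialization $d\to 0$, $b=0$, $c=a/q$ and, crucially, the differentiate-then-let-$a\to 1$ device, it does not constitute a proof either.
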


\begin{proof}
Letting $d \rightarrow 0$ in Theorem \ref{generalization of BEM's result}, we get
\begin{align*}
\sum_{n=1}^\infty \frac{(-1)^{n-1} (b/a)_n (q/e)_{n-1}(ac)^n e^{n-1}q^{n(n-1)/2}}{(b)_n(cq)_n(q)_{n-1}} = c(a-b)\frac{(ceq)_\infty }{(cq)_\infty } \sum_{n=0}^\infty \frac{(a)_n (q/e)_n (ceq)^n}{(b)_{n+1} (q)_n}.
\end{align*}
Next, putting $b=0$ and then re-indexing the left hand side sum we have
\begin{align*}
\sum_{n=0}^\infty \frac{(-1)^{n} (q/e)_{n}(ac)^{n+1} e^{n}q^{n(n+1)/2}}{(cq)_{n+1} (q)_{n}} = ac \frac{(ceq)_\infty }{(cq)_\infty } \sum_{n=0}^\infty \frac{(a)_n (q/e)_n (ceq)^n}{ (q)_n}.
\end{align*}
Multiply both sides by $\frac{(cq)_\infty}{ac(ceq)_\infty}$ and then differentiating with respect to $a$ yields
\begin{align*}
\frac{(cq^2)_\infty}{(ceq)_\infty} \sum_{n=1}^\infty \frac{(-1)^{n} n (q/e)_{n} a^{n-1} (ce)^nq^{n(n+1)/2}}{(cq^2)_{n} (q)_{n}} = \sum_{n=1}^\infty \frac{(a)_n (q/e)_n (ceq)^n}{ (q)_n}\sum_{k=0}^{n-1}\frac{-q^k}{1-aq^k} .
\end{align*}
Now letting $a$ tends to $1$ in the above expression and using the fact
\begin{align*}
\lim _{a \rightarrow 1}\left(a\right)_{n} \sum_{k=0}^{n-1}\left(\frac{q^{k}}{1-aq^{k}}\right) & =\lim _{a \rightarrow 1}\left[\frac{(a)_{n}}{1-a}+\left(a\right)_{n}\left(\frac{q}{1-aq}+\cdots+\frac{q^{n-1}}{1- aq^{n-1}}\right)\right] =(q)_{n-1},
\end{align*} in the right hand side expression, we get
\begin{align*}
\frac{(cq^2)_\infty}{(ceq)_\infty} \sum_{n=1}^\infty \frac{(-1)^{n} n (q/e)_{n} (ce)^n q^{n(n+1)/2}}{(cq^2)_{n} (q)_{n}} = -\sum_{n=1}^\infty \frac{ (q/e)_n (ceq)^n}{ 1-q^n}.
\end{align*}
The result \eqref{one variable generalization of entry 2 equation} follows by substituting $c=a/q$ in the above expression.
\end{proof}

One can easily check that, Ramanujan's Entry 2 \eqref{entry 2} follows from Theorem \ref{one variable generalization of entry 2} by letting $e \rightarrow 0$. A finite analogue of Theorem \ref{one variable generalization of entry 2} is stated below.

\begin{theorem}[Finite analogue of Entry 2]\label{finite analogue of one variable generalization of entry 2}
For $N\in \mathbb{N}$, we have
\begin{align}\label{finite analogue of one variable generalization of entry 2 equation}
(a q)_N \sum_{n=1}^N \begin{bmatrix}
N\\n
\end{bmatrix} \frac{(-1)^{n-1} n (q/e)_n (ae)^n q^{n(n-1)/2}}{(aq)_n}=\sum_{n=1}^{N}\begin{bmatrix}
N\\n
\end{bmatrix} \frac{(ae)_{N-n} (q)_n (q/e)_n (ae)^n}{1-q^n}.
\end{align}
\end{theorem}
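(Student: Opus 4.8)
The plan is to mirror the derivation of the infinite version (Theorem \ref{one variable generalization of entry 2}) but starting from the \emph{finite} analogue in Theorem \ref{finite analogue of generalization of BEM's result}, performing the same specializations and then differentiating at the right moment. First I would set $d \rightarrow 0$ in Theorem \ref{finite analogue of generalization of BEM's result}; since $(c/d)_n (ad)^n = \prod_{j=0}^{n-1}(d - cq^j)\cdot a^n \to (-c)^n q^{n(n-1)/2} a^n$ and $(bd/c)_{n-1}\to 1$, $(ad)_{n-1}\to 1$, $(ade)_{N-n}\to 1$, the identity collapses to a two-parameter finite sum involving $a,b,c,e$ (with the factor $(a-b)(d-c)/(ad-b) \to (a-b)(-c)/(-b) = c(a-b)/b$ after the $d\to 0$ limit is handled carefully, exactly as in the infinite proof). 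Then I would set $b=0$, which removes the $(b/a)_n$ and $(b)_n$ Pochhammers, re-index the left sum by shifting $n \mapsto n+1$, and multiply through by an appropriate factor so that both sides become power series in $a$ with no spurious $a$ in the denominator.

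The crucial step is differentiation with respect to $a$, which is what produces the factor of $n$ on the left and the harmonic-type combination on the right, just as in the infinite case. After dividing by $ac$ times the ratio of the $(ceq)$-products so that the left-hand side is a clean series, I would differentiate both sides in $a$ and then take the limit $a \rightarrow 1$. Here I expect to invoke the same limiting identity used in the proof of Theorem \ref{one variable generalization of entry 2}, namely
\begin{align*}
\lim_{a\to 1}(a)_n \sum_{k=0}^{n-1}\frac{q^k}{1-aq^k} = (q)_{n-1},
\end{align*}
to collapse the differentiated right-hand side into the desired closed form with denominator $1-q^n$. Finally I would substitute $c = a/q$ (renaming the surviving parameter back to $a$) to convert the $(ce)$ and $(cq)$ factors into the $(ae)_{N-n}$, $(aq)_N$, and $(ae)^n$ that appear in \eqref{finite analogue of one variable generalization of entry 2 equation}, and use $\begin{bmatrix} N\\n \end{bmatrix}$ to absorb the ratios of $(q)$-Pochhammers generated by the finite sums.

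The main obstacle, and the place where the finite case genuinely differs from the infinite one, is controlling the truncation data through the differentiation and the $a\to 1$ limit simultaneously. In the infinite proof the factor $(cq)_\infty/(ceq)_\infty$ is a benign constant in $a$, but in the finite setting the analogous prefactor carries $N$-dependent Pochhammers such as $(ceq)_{N-n}$ and $(cq)_N$ that must be differentiated or evaluated consistently, and one must check that the boundary terms at $n=0$ and $n=N$ behave correctly after the shift $n\mapsto n+1$ (in particular that no term outside $1\le n\le N$ is erroneously retained). I would therefore track the $q$-binomial normalization very carefully at each re-indexing, verifying at the end that the $d\to 0$, $b\to 0$, differentiate, $a\to 1$, $c=a/q$ chain reproduces exactly the stated coefficients; the algebra is routine but the bookkeeping of the finite factors is where an error is most likely to creep in.
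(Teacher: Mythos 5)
Your proposal follows the paper's proof essentially step for step: let $d\to 0$ in Theorem \ref{finite analogue of generalization of BEM's result}, set $b=0$, re-index and normalize, differentiate with respect to $a$, let $a\to 1$ using $\lim_{a\to 1}(a)_n\sum_{k=0}^{n-1}q^k/(1-aq^k)=(q)_{n-1}$, and finally substitute $c=a/q$. The bookkeeping worry you raise is real but mild, since the $N$-dependent Pochhammers $(ceq)_{N-n}$ and $(cq)_N$ do not involve $a$ and so pass through the differentiation untouched.
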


\begin{proof}
Letting $d \rightarrow 0$ in Theorem \ref{finite analogue of generalization of BEM's result}, we get
\begin{align*}
\sum_{n=1}^N \frac{(q)_N (b/a)_n (q/e)_{n-1} (-ace)^{n-1} q^{n(n-1)/2}}{ (q)_{N-n} (b)_n(cq)_n(q)_{n-1}}=\frac{(a-b)}{a(cq)_N} \sum_{n=1}^N \frac{(q)_N (ceq)_{N-n} (a)_{n-1} (q/e)_{n-1} (ceq)^{n-1}}{ (q)_{N-n} (b)_{n} (q)_{n-1}}.
\end{align*}
Now substitute $b=0$ in the above identity and then re-index the sum of both sides to obtain
\begin{align*}
\sum_{n=0}^{N-1} \frac{(q)_N (q/e)_{n} (-ace)^{n} q^{n(n+1)/2}}{ (q)_{N-n-1} (cq)_{n+1} (q)_{n}} = \frac{1}{(cq)_N} \sum_{n=0}^{N-1} \frac{(q)_N (ceq)_{N-n-1} (a)_{n} (q/e)_{n} (ceq)^{n}}{ (q)_{N-n-1} (q)_{n}}.
\end{align*}
Replacing $N$ by $N+1$ and then multiplying both sides by $\frac{(cq)_{N+1}}{1-q^{N+1}}$ in the resulting expression yields
\begin{align*}
(cq^2)_N \sum_{n=0}^{N}\begin{bmatrix}
N\\n
\end{bmatrix} \frac{(-1)^n (q/e)_{n} (ace)^{n} q^{n(n+1)/2}}{ (cq^2)_{n} } =  \sum_{n=0}^{N}\begin{bmatrix}
N\\n
\end{bmatrix} (ceq)_{N-n} (a)_{n} (q/e)_{n} (ceq)^{n}.
\end{align*}
Now differentiate this identity with respect to the variable $a$ to get
\begin{align*}
(cq^2)_N \sum_{n=0}^{N}\begin{bmatrix}
N\\n
\end{bmatrix} \frac{(-1)^n (q/e)_{n}na^{n-1} (ce)^{n} q^{n(n+1)/2}}{ (cq^2)_{n} } =  \sum_{n=0}^{N}\begin{bmatrix}
N\\n
\end{bmatrix} (ceq)_{N-n} (a)_{n} (q/e)_{n} (ceq)^{n}\sum_{k=0}^{n-1}\frac{-q^k}{1-aq^k}.
\end{align*}
Finally, let $a \rightarrow 1$ and then substitute $c=a/q$ in the above identity and use the fact that
\begin{align*}
\lim_{a \rightarrow 1} (a)_n \sum_{k=0}^{n-1} \frac{q^k}{1-aq^k} = \lim_{a\rightarrow 1} \left[\frac{(a)_n}{1-a} + (a)_n \sum_{k=1}^{n-1} \frac{q^k}{1-aq^k} \right]=(q)_{n-1},
\end{align*}
to prove \eqref{finite analogue of one variable generalization of entry 2 equation}.
\end{proof}

\begin{remark}
Letting $N\rightarrow \infty$ in Theorem \ref{finite analogue of one variable generalization of entry 2} allows us to obtain Theorem \ref{one variable generalization of entry 2}. Also, $e \rightarrow 0$ leads us to an identity of Dixit and Patel \cite[Theorem 3.2]{DixitPatel} which essentially serves as a finite analogue of Ramanujan's Entry 2 \eqref{entry 2},
\begin{align*}
(a q)_N \sum_{n=1}^N \begin{bmatrix}
N\\n
\end{bmatrix} \frac{n a^n q^{n^2}}{(aq)_n}=\sum_{n=1}^{N}\begin{bmatrix}
N\\n
\end{bmatrix} \frac{(q)_n (-1)^{n-1} a^n q^{n(n+1)/2}}{1-q^n}.
\end{align*}
\end{remark}
The next result is a one variable generalization of Entry 3 \eqref{entry 3}.

\begin{theorem}[One variable generalization of Entry 3]\label{one variable generalization of entry 3}
For $|b|<1$, we have
\begin{equation}\label{one variable generalization of entry 3 equation}
\sum_{n=1}^\infty \frac{(-1)^n(b/a)_n a^n q^{n(n+1)/2}}{(1-q^n) (b)_n (q/e)_n e^n} = \sum_{n=1}^\infty \frac{(aq/be)_n b^n}{(1-q^n) (q/e)_n} - \sum_{n=1}^\infty \frac{b^n}{1-q^n}.
\end{equation}
\end{theorem}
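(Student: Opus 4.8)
The plan is to prove this exactly in the spirit of the proof of Theorem~\ref{one variable generalization of entry 2}: start from the same ${}_3\phi_2$ transformation \eqref{3_phi_2} that drives Theorem~\ref{generalization of BEM's result}, specialize it so that a single free parameter survives, and then manufacture the two $1/(1-q^n)$ factors by a differentiation (limiting) argument rather than a plain substitution. A plain substitution cannot succeed, because the left-hand side of the target reduces to Entry~3 only as $e\to 0$, whereas the master identity reduces to \eqref{bem} at $e=1$; so the factor $1/(1-q^n)$ has to be produced by hand, precisely through the limit identity $\lim_{A\to 1}(A)_n\sum_{k=0}^{n-1}\frac{q^k}{1-Aq^k}=(q)_{n-1}$ already used in the Entry~2 proof.

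Concretely, in \eqref{3_phi_2} I would set $B=b/a$, $D=q/e$, $E=b$ and let $C\to\infty$. The confluence $\lim_{C\to\infty}(C)_n(aq/(eAC))^n=(-1)^n a^n q^{n(n+1)/2}/(e^n A^n)$ manufactures the factor $(-1)^n q^{n(n+1)/2}$, and since $D/B=aq/(be)$ and $D=q/e$, the transformation collapses to an identity of the shape
\begin{align*}
\sum_{n=0}^\infty \frac{(A)_n (b/a)_n (-1)^n a^n q^{n(n+1)/2}}{(b)_n (q/e)_n (q)_n\, e^n A^n}
= \frac{(b/A)_\infty}{(b)_\infty} \sum_{n=0}^\infty \frac{(A)_n (aq/be)_n (b/A)^n}{(q/e)_n (q)_n},
\end{align*}
in which the Pochhammer $(aq/be)_n$ has appeared in the numerator and $(q/e)_n$ sits in the denominator on the right --- exactly the shape demanded by the right-hand side of the target. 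This is the heart of the choice of parameters: taking $D=q/e,\,E=b$ (and not the reverse assignment) is what deposits the correct $(aq/be)_n/(q/e)_n$ structure.

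Both sides above equal $1$ at $A=1$, so I would subtract $1$, divide by $1-A$, and let $A\to 1$. On the left, $(A)_n/\big((1-A)(q)_n\big)\to 1/(1-q^n)$ delivers the target's left-hand sum. On the right I would use the product rule: differentiating the prefactor $(b/A)_\infty/(b)_\infty$ produces, via the Lambert expansion $\sum_{k\ge 0}\frac{bq^k}{1-bq^k}=\sum_{m\ge 1}\frac{b^m}{1-q^m}$, exactly the subtracted sum $\sum_m b^m/(1-q^m)$, while differentiating $(A)_nA^{-n}$ at $A=1$ (again the limit fact above, contributing $-(q)_{n-1}$ per term) assembles $\sum_n (aq/be)_n b^n/\big((1-q^n)(q/e)_n\big)$; together these give precisely the right-hand side of the theorem. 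The main obstacle I anticipate is analytic rather than algebraic: justifying the interchange of the $A\to 1$ differentiation with the infinite summation and with the earlier $C\to\infty$ confluence, and verifying convergence of the resulting series in the stated range $|b|<1$. The stated limit identity together with dominated-convergence-type estimates should render this routine, but it is where the care is needed.
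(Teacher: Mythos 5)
Your argument is correct, and it reaches the identity by a genuinely different route from the paper. The paper does not return to the ${}_3\phi_2$ transformation \eqref{3_phi_2} at all: it differentiates the already-established Corollary \ref{4-variable generalization of Andrews Garvan Liang} with respect to $e$ and lets $e\to q$, exploiting the vanishing of $(q/e)_{n-1}$ at $e=q$ through $\lim_{e\to q}\frac{\mathrm{d}}{\mathrm{d}e}\bigl(e^{n-1}(q/e)_{n-1}\bigr)=q^{n-2}(q)_{n-2}$ to manufacture the $1/(1-q^n)$ factors, and then relabels the three surviving parameters as $c=b/q^2$, $d=a/q$, $z=1/(eq)$, so that the $e$ in the final statement is a reincarnation of $z$ rather than the $e$ that was differentiated away. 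Your route instead specializes \eqref{3_phi_2} afresh (with $B=b/a$, $D=q/e$, $E=b$, $C\to\infty$), retains an auxiliary parameter $A$, and differentiates at $A=1$ in the style of the paper's own proof of Theorem \ref{one variable generalization of entry 2}; I checked the confluence and the two limits, and they produce exactly the three sums of \eqref{one variable generalization of entry 3 equation} --- in particular $-P'(1)$ for $P(A)=(b/A)_\infty/(b)_\infty$ gives $-\sum_{m\ge1}b^m/(1-q^m)$ by the Lambert expansion, as you say, while $(A)_n/((1-A)(q)_n)\to 1/(1-q^n)$ handles the two series. What your version buys is transparency: the intermediate two-parameter identity makes visible where each of the three sums comes from, and the hypothesis $|b|<1$ arises naturally as the condition $|b/A|<1$ near $A=1$. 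What the paper's version buys is uniformity: the identical differentiation-in-$e$ argument applies verbatim to the finite Corollary \ref{finite analogue of 4-variable generalization of Andrews Garvan Liang} and yields the finite analogue (Theorem \ref{finite analogue of one variable generalization of entry 3}) with no new ideas, whereas your $A\to1$ device would need a separate finite adaptation. The analytic interchanges you flag are indeed routine: the left-hand series is dominated by the factor $q^{n(n+1)/2}$, and on the right $(A)_n/(1-A)=\prod_{k=1}^{n-1}(1-Aq^k)$ is bounded uniformly in $n$ for $A$ near $1$, so termwise passage to the limit is justified for $|b|<1$.
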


\begin{proof}
Differentiating \eqref{4-variable generalization of Andrews Garvan Liang} with respect to $e$ gives
\begin{align*}
& \sum_{n=2}^\infty \frac{(-1)^n (c/d)_n (dz)^n q^{n(n+1)/2}}{(zq)_n(cq)_n(q)_{n-1}} \frac{\mathrm{d}}{\mathrm{d}e} \left(e^{n-1} (q/e)_{n-1} \right) =  \frac{zq(c-d)}{(1-zq) (cq)_\infty } \frac{\mathrm{d}}{\mathrm{d}e} (ceq)_\infty\\
&\hspace{30mm} + \frac{z(c-d)}{c(cq)_\infty } \sum_{n=2}^\infty \frac{(zqd/c)_{n-1} (cq)^n}{(zq)_n(q)_{n-1}} \frac{\mathrm{d}}{\mathrm{d}e} \left(e^{n-1} (q/e)_{n-1} (ceq)_\infty \right).
\end{align*}
Letting $e$ tends to $q$ in the above expression and simplifying with the fact that
\begin{align}\label{differentiation with respect to e}
\lim_{e\rightarrow q} \left[ \frac{\mathrm{d}}{\mathrm{d}e} \left(e^{n-1} (q/e)_{n-1} \right) \right] = q^{n-2} (q)_{n-2},
\end{align}
and
\begin{align*}
\lim_{e \rightarrow q} \left[ \frac{\mathrm{d}}{\mathrm{d}e} (ceq)_\infty \right] = (cq^2)_\infty \sum_{n=1}^\infty \frac{-cq^n}{1-cq^{n+1}},
\end{align*} we get
\begin{align*}
\sum_{n=2}^\infty &\frac{(-1)^n (c/d)_n (dz)^n q^{(n-1)(n+4)/2}}{(zq)_n(cq)_n(1-q^{n-1})} =  \frac{-z(c-d)}{(1-zq) (1-cq)} \sum_{n=1}^\infty \frac{cq^{n+1}}{1-cq^{n+1}} \\
& \hspace{30mm} + \frac{z(c-d)}{c(1-cq) } \sum_{n=2}^\infty \frac{(zqd/c)_{n-1} c^n q^{2n-2}}{(zq)_n(1-q^{n-1})}.
\end{align*}
Re-index the summation and then multiply both sides by $\frac{(1-zq)(1-cq)}{z(c-d)}$ to get
\begin{align*}
\sum_{n=1}^\infty \frac{(-1)^n (cq/d)_n (dz)^n q^{n(n+5)/2}}{ (1-q^n)(zq^2)_n(cq^2)_n} =  -\sum_{n=0}^\infty \frac{cq^{n+2}}{1-cq^{n+2}}  + \sum_{n=1}^\infty \frac{(zqd/c)_{n} c^n q^{2n}}{(1-q^n) (zq^2)_n}.
\end{align*}
Now substitute $c=b/q^2,~d=a/q$ and $z=1/eq$ in the above expression and simplify to get \eqref{one variable generalization of entry 3 equation}.
\end{proof}

\begin{remark}
Letting $e \rightarrow 0$ in \eqref{one variable generalization of entry 3 equation} and using the fact that
\begin{align*}
\lim_{e \rightarrow 0} \frac{(aq/be)_n}{(q/e)_n}= \left(\frac{a}{b}\right)^n
\end{align*}
serves Entry 3 \eqref{entry 3}. We can also recover Entry 1 \eqref{entry 1} from \eqref{one variable generalization of entry 3 equation}. Multiply both sides of \eqref{one variable generalization of entry 3 equation} by $1-\frac{q}{e}$ and then take $e \rightarrow q$ in the resultant identity gives
\begin{align*}
\sum_{n=1}^\infty \frac{(-1)^n(b/a)_n a^n q^{n(n-1)/2}}{ (b)_n (q)_n } = \sum_{n=1}^\infty \frac{(a/b)_n b^n}{(q)_n}.
\end{align*}
Now use $q$-binomial theorem \eqref{q-Binomial theorem} and then replace $a$ by $-aq$ and $b$ by $bq$ to get \eqref{entry 1}. 
\end{remark}

A finite analogue of Theorem \ref{one variable generalization of entry 3} is given below.

\begin{theorem}[Finite analogue of Entry 3]\label{finite analogue of one variable generalization of entry 3}
For a natural number $N$, we have
\begin{align}\label{finite analogue of one variable generalization of entry 3 equation}
\sum_{n=1}^{N} \begin{bmatrix}
N\\
n
\end{bmatrix} \frac{(-1)^n (q)_{n-1} (b/a)_{n} a^{n} q^{n(n+1)/2} }{(b)_{n} (q/e)_{n} e^n} = \sum_{n=1}^N \begin{bmatrix}
N\\
n
\end{bmatrix} \frac{(q)_{n-1} (b)_{N-n} (aq/be)_{n} b^n}{(b)_N (q/e)_{n}} - \sum_{n=1}^{N} \frac{bq^{n-1} }{1-bq^{n-1}}.
\end{align}
\end{theorem}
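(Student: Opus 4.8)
The plan is to construct the finite analogue of the proof of Theorem~\ref{one variable generalization of entry 3}, replacing the infinite starting identity used there by its finite counterpart, Corollary~\ref{finite analogue of 4-variable generalization of Andrews Garvan Liang}. In that corollary the parameter $e$ enters only through $e^{n-1}(q/e)_{n-1}$ on the left, and through $e^{n-1}(q/e)_{n-1}$ together with the finite product $(ceq)_{N-n}$ on the right. I would therefore differentiate both sides with respect to $e$ and then let $e\to q$, using \eqref{differentiation with respect to e}, namely $\lim_{e\to q}\frac{\mathrm{d}}{\mathrm{d}e}\big(e^{n-1}(q/e)_{n-1}\big)=q^{n-2}(q)_{n-2}$. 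Since $(q/e)_{n-1}\big|_{e=q}=(1)_{n-1}=0$ for $n\ge 2$ while equalling $1$ for $n=1$, the limit isolates exactly one surviving contribution in each summand.

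Concretely, on the left-hand side the $n=1$ term carries the constant factor $e^{0}(q/e)_{0}=1$, so its derivative vanishes; every $n\ge 2$ term contributes the derivative of $e^{n-1}(q/e)_{n-1}$ times the remaining $e$-free data, producing $\sum_{n=2}^{N}$ of a quotient in which the combination $\frac{q^{n-2}(q)_{n-2}}{(q)_{n-1}}=\frac{q^{n-2}}{1-q^{n-1}}$ appears. On the right-hand side the same mechanism governs the $n\ge 2$ terms, where $(ceq)_{N-n}\big|_{e=q}=(cq^{2})_{N-n}$ simply survives; for the $n=1$ term, however, the only surviving $e$-dependence is the finite product $(ceq)_{N-1}$, whose logarithmic derivative at $e=q$ produces a \emph{finite} sum of the shape $\sum_{j}\frac{-cq^{j+1}}{1-cq^{j+2}}$. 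This is precisely the finite analogue of the infinite series $\sum_{n\ge1}\frac{cq^{n+1}}{1-cq^{n+1}}$ encountered in the proof of Theorem~\ref{one variable generalization of entry 3}. After collecting these pieces I would reindex the sums, multiply through by the appropriate normalizing factor (the finite analogue of $\frac{(1-zq)(1-cq)}{z(c-d)}$), and, following the device used in the proof of Theorem~\ref{finite analogue of one variable generalization of entry 2}, replace $N$ by $N+1$ to realign the upper summation limit. Finally I would substitute $c=b/q^{2}$, $d=a/q$, $z=1/(eq)$, the same substitutions as in the infinite case; under these one has $cq^{2}=b$, $cq/d=b/a$, $zq^{2}=q/e$, and $zqd/c=aq/(be)$, so the Pochhammer symbols collapse to exactly $(b)_n$, $(b/a)_n$, $(q/e)_n$, $(b)_{N-n}$ and $(aq/be)_n$ appearing in \eqref{finite analogue of one variable generalization of entry 3 equation}, while the logarithmic-derivative sum should become $-\sum_{n=1}^{N}\frac{bq^{n-1}}{1-bq^{n-1}}$.

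The main obstacle will be the right-hand side bookkeeping rather than any conceptual step. In the infinite proof $(ceq)_\infty$ is a single factor that can be pulled outside the whole sum before differentiating, so the product rule is applied once; here $(ceq)_{N-n}$ sits \emph{inside} the summation and depends on $n$, so one must differentiate term-by-term and argue carefully that the $(ceq)_{N-n}$-derivative contributes only through the $n=1$ term, being annihilated by $(q/e)_{n-1}\big|_{e=q}=0$ for $n\ge2$. Equally delicate is the matching of truncation ranges: the differentiated left side naturally runs over $n\ge 2$, so the reindexing together with the $N\mapsto N+1$ shift must be tracked exactly in order for the final sums to run over $1\le n\le N$, and one must verify that the finite logarithmic-derivative sum $\sum_{j}\frac{-cq^{j+1}}{1-cq^{j+2}}$ reindexes to precisely the $N$ terms of $-\sum_{n=1}^{N}\frac{bq^{n-1}}{1-bq^{n-1}}$, with the correct powers of $q$ produced under $c=b/q^{2}$. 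I expect these $q$-Pochhammer and index manipulations, rather than the differentiation or the limit $e\to q$, to be where the genuine effort lies.
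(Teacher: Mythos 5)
Your proposal matches the paper's proof essentially step for step: the paper likewise differentiates Corollary \ref{finite analogue of 4-variable generalization of Andrews Garvan Liang} with respect to $e$, lets $e\to q$ using \eqref{differentiation with respect to e} together with $\lim_{e\to q}\frac{\mathrm{d}}{\mathrm{d}e}(ceq)_{N-1}=(cq^2)_{N-1}\sum_{n=1}^{N-1}\frac{-cq^n}{1-cq^{n+1}}$, re-indexes, replaces $N$ by $N+1$, multiplies by $\frac{(1-zq)(1-cq)}{z(1-q^{N+1})(c-d)}$, and substitutes $c=b/q^2$, $d=a/q$, $z=1/(eq)$. All the delicate points you flag (the $n=1$ term carrying the logarithmic derivative of the finite product, the annihilation of the other product-rule terms by $(q/e)_{n-1}|_{e=q}=0$ for $n\ge 2$, and the index realignment) are exactly how the paper handles it.
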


\begin{proof}
Differentiating both sides of \eqref{finite analogue of 4-variable generalization of Andrews Garvan Liang} with respect to $e$ yields
\begin{align*}
\sum_{n=2}^N \begin{bmatrix}
N\\
n
\end{bmatrix} &\frac{(q)_n (c/d)_n (-zd)^n q^{n(n+1)/2} }{(zq)_n(cq)_n(q)_{n-1}} \frac{\mathrm{d}}{\mathrm{d}e} \Big( e^{n-1} (q/e)_{n-1} \Big)= \frac{zq(1-q^N)(c-d)}{(1-zq)(cq)_N} \frac{\mathrm{d}}{\mathrm{d}e}(ceq)_{N-1} \\
 &+\frac{zq(c-d)}{(cq)_N} \sum_{n=2}^N \begin{bmatrix}
N\\
n
\end{bmatrix} \frac{(q)_n (zdq/c)_{n-1} (cq)^{n-1}}{(zq)_{n}(q)_{n-1}} \frac{\mathrm{d}}{\mathrm{d}e} \Big( e^{n-1} (q/e)_{n-1} (ceq)_{N-n} \Big) .
\end{align*}
Letting $e \rightarrow q$ in the above expression, together with \eqref{differentiation with respect to e} and
\begin{align*}
\lim_{e \rightarrow q} \frac{d}{de}(ceq)_{N-1} = (cq^2)_{N-1} \sum_{n=1}^{N-1} \frac{-cq^n}{1-cq^{n+1}},
\end{align*}
gives
\begin{align*}
\sum_{n=2}^N \begin{bmatrix}
N\\
n
\end{bmatrix} &\frac{(-1)^n(q)_n (c/d)_n (zd)^n q^{(n-1)(n+4)/2} }{(zq)_n(cq)_n(1-q^{n-1})} = \frac{z(1-q^N)(c-d)}{(1-zq)(1-cq)} \sum_{k=1}^{N-1} \frac{-cq^{k+1}}{1-cq^{k+1}} \\
&\hspace{20mm}+\frac{z(c-d)}{(cq)_N} \sum_{n=2}^N \begin{bmatrix}
N\\
n
\end{bmatrix} \frac{(q)_n (cq^2)_{N-n} (zdq/c)_{n-1} c^{n-1}q^{2n-2}}{(zq)_{n}(1-q^{n-1}) }.
\end{align*}
Replace $N$ by $N+1$ and then re-index the first and the last sum to get
\begin{align*}
\sum_{n=1}^{N} &\frac{(-1)^{n+1}(q)_{N+1} (c/d)_{n+1} (zd)^{n+1} q^{n(n+5)/2} }{(q)_{N-n} (zq)_{n+1} (cq)_{n+1} (1-q^{n})} = \frac{z(1-q^{N+1})(c-d)}{(1-zq)(1-cq)} \sum_{n=1}^{N} \frac{-cq^{n+1} }{1-cq^{n+1}} \\
&\hspace{50mm}+\frac{z(c-d)}{(cq)_{N+1}} \sum_{n=1}^N \frac{(q)_{N+1} (cq^2)_{N-n} (zdq/c)_{n} c^n q^{2n}}{(q)_{N-n} (zq)_{n+1}(1-q^{n}) }.
\end{align*}
Multiplying both sides of the above identity by $\frac{(1-zq)(1-cq)}{z(1-q^{N+1})(c-d)}$, we arrive
\begin{align*}
\sum_{n=1}^{N} \begin{bmatrix}
N\\
n
\end{bmatrix} \frac{(-1)^n (q)_{n-1} (cq/d)_{n} (zd)^{n} q^{n(n+5)/2} }{(zq^2)_{n} (cq^2)_{n}} &= \sum_{n=1}^N \begin{bmatrix}
N\\
n
\end{bmatrix} \frac{(q)_{n-1} (cq^2)_{N-n} (zdq/c)_{n} c^n q^{2n}}{(cq^2)_N (zq^2)_{n}} \\
& \hspace{2cm} - \sum_{n=1}^{N} \frac{cq^{n+1} }{1-cq^{n+1}}.
\end{align*}
Now replace $c$ by $b/q^2,~ d$ by $a/q$ and  $z$ by $1/eq$ to get
\begin{align*}
\sum_{n=1}^{N} \begin{bmatrix}
N\\
n
\end{bmatrix} \frac{(-1)^n(q)_{n-1} (b/a)_{n} a^{n} q^{n(n+1)/2} }{(b)_{n} (q/e)_{n} e^n} = \sum_{n=1}^N \begin{bmatrix}
N\\
n
\end{bmatrix} \frac{(q)_{n-1} (b)_{N-n} (aq/be)_{n} b^n}{(b)_N (q/e)_{n}} - \sum_{n=1}^{N} \frac{bq^{n-1} }{1-bq^{n-1}}.
\end{align*}
This proves \eqref{finite analogue of one variable generalization of entry 3 equation}.
\end{proof}

By letting $N\rightarrow \infty$ in \eqref{finite analogue of one variable generalization of entry 3 equation}, one can easily get \eqref{one variable generalization of entry 3 equation} and $e \rightarrow 0$ gives finite analogue of \eqref{entry 3} which is different from the identity of Dixit and Patel \cite[Theorem 3.3]{DixitPatel}.

Allowing $a \rightarrow 0$ and then replace $b$ by $zq$ in Theorem \ref{one variable generalization of entry 3} gives a one variable generalization of Entry 4 \eqref{entry 4}, namely
\begin{theorem}[One variable generalization of Entry 4]\label{one variable generalization of entry 4}
For $|zq|<1$, we have
\begin{equation}\label{one variable generalization of entry 4 equation}
\sum_{n=1}^\infty \frac{z^n q^{n(n+1)}}{(1-q^n) (zq)_n (q/e)_n e^n} = \sum_{n=1}^\infty \frac{(zq)^n}{(1-q^n)} \left(\frac{1}{(q/e)_n} - 1\right).
\end{equation}
\end{theorem}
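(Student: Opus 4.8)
The plan is to derive this identity as the limiting case of the one variable generalization of Entry~3 flagged in the remark just above the statement: I would begin from \eqref{one variable generalization of entry 3 equation}, let $a\to 0$, and then replace $b$ by $zq$. The only genuine work is to track what each $a$-dependent factor does in the limit; everything else is substitution.

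First I would handle the left-hand side of \eqref{one variable generalization of entry 3 equation}, where the sole $a$-dependent factor is $(b/a)_n a^n$. Expanding the Pochhammer symbol gives
\begin{align*}
(b/a)_n\, a^n = a^n\prod_{k=0}^{n-1}\left(1 - \frac{bq^k}{a}\right) = \prod_{k=0}^{n-1}\left(a - bq^k\right),
\end{align*}
so that $\lim_{a\to 0}(b/a)_n a^n = (-b)^n q^{n(n-1)/2}$. Folding this into the existing sign $(-1)^n$ and exponent $q^{n(n+1)/2}$, and using $(-1)^n(-b)^n = b^n$ together with $\tfrac{n(n-1)}{2}+\tfrac{n(n+1)}{2}=n^2$, the left sum collapses to $\sum_{n\ge 1} b^n q^{n^2}/\big[(1-q^n)(b)_n(q/e)_n e^n\big]$.

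On the right-hand side the limit is immediate: $(aq/be)_n \to (0;q)_n = 1$ as $a\to 0$, so the first sum tends to $\sum_{n\ge 1} b^n/\big[(1-q^n)(q/e)_n\big]$, while the second sum carries no $a$. Subtracting and pulling out $b^n/(1-q^n)$ yields
\begin{align*}
\sum_{n=1}^\infty \frac{b^n q^{n^2}}{(1-q^n)(b)_n(q/e)_n e^n} = \sum_{n=1}^\infty \frac{b^n}{1-q^n}\left(\frac{1}{(q/e)_n} - 1\right).
\end{align*}
Replacing $b$ by $zq$ now gives exactly \eqref{one variable generalization of entry 4 equation}, since $(zq)^n q^{n^2}=z^n q^{n(n+1)}$ in the numerator on the left and $(b)_n\to(zq)_n$ in the denominator, while the right-hand side becomes $\sum_{n\ge 1}(zq)^n(1-q^n)^{-1}\big((q/e)_n^{-1}-1\big)$.

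The only point beyond bookkeeping — and hence the step I would be most careful about — is justifying the term-by-term passage $a\to 0$ inside the infinite series. Under the hypothesis $|zq|<1$ (equivalently $|b|<1$ before the substitution) the three series converge absolutely and uniformly for $a$ in a neighbourhood of $0$, so the limit may be exchanged with the summation; this convergence is exactly what the stated restriction $|zq|<1$ secures, and it is the single analytic ingredient, the remainder of the argument being routine algebraic simplification.
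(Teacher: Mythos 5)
Your proposal is correct and follows exactly the route the paper itself uses: the paper derives Theorem \ref{one variable generalization of entry 4} by letting $a\to 0$ and replacing $b$ by $zq$ in Theorem \ref{one variable generalization of entry 3}, stating this without further detail. Your computation $\lim_{a\to 0}(b/a)_n a^n=(-b)^nq^{n(n-1)/2}$, the resulting simplification to $b^nq^{n^2}$, and the convergence remark tying $|zq|<1$ to $|b|<1$ are all accurate and simply fill in the bookkeeping the paper omits.
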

Similarly, by letting $a \rightarrow 0$ and replacing $b$ by $zq$ in Theorem \ref{finite analogue of one variable generalization of entry 3}, we get a finite analogue of the above theorem.

\begin{theorem}[Finite analogue of Entry 4]\label{finite analogue of one variable generalization of entry 4}
Let $N\in \mathbb{N}$. Then we have
\begin{align}\label{finite analogue of one variable generalization of entry 4 equation}
\sum_{n=1}^{N} \begin{bmatrix}
N\\
n
\end{bmatrix} \frac{(q)_{n-1} z^{n} q^{n(n+1)} }{(zq)_{n} (q/e)_{n} e^n} = \sum_{n=1}^N \begin{bmatrix}
N\\
n
\end{bmatrix} \frac{(q)_{n-1} (zq)_{N-n} (aq)^n}{(zq)_N (q/e)_{n}} - \sum_{n=1}^N \frac{zq^{n} }{1-zq^{n}}.
\end{align}
\end{theorem}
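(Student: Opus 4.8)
The plan is to obtain Theorem \ref{finite analogue of one variable generalization of entry 4} directly from the finite analogue of Entry 3, namely from \eqref{finite analogue of one variable generalization of entry 3 equation}, by first letting $a\to 0$ and then replacing $b$ by $zq$. Since \eqref{finite analogue of one variable generalization of entry 3 equation} is a finite identity, the limit $a\to 0$ may be taken termwise with no convergence concerns, so the whole argument reduces to tracking how each factor transforms under these two specializations.

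First I would take $a\to 0$ in \eqref{finite analogue of one variable generalization of entry 3 equation}. The only delicate factor is the product $(-1)^n(b/a)_n a^n$ on the left-hand side, which I would rewrite in telescoping form as
\begin{align*}
(b/a)_n\, a^n = \prod_{k=0}^{n-1}\left(a-bq^k\right),
\end{align*}
so that as $a\to 0$ this tends to $\prod_{k=0}^{n-1}(-bq^k)=(-1)^n b^n q^{n(n-1)/2}$. Hence $(-1)^n(b/a)_n a^n \to b^n q^{n(n-1)/2}$, and merging this with the surviving factor $q^{n(n+1)/2}$ collapses the power of $q$ to $q^{n^2}$. On the right-hand side the sole $a$-dependence sits in $(aq/be)_n$, which tends to $(0;q)_n=1$, while the final sum over $bq^{n-1}/(1-bq^{n-1})$ contains no $a$ and is unaffected.

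After this limit the identity reads
\begin{align*}
\sum_{n=1}^{N} \begin{bmatrix} N\\n \end{bmatrix} \frac{(q)_{n-1}\, b^n q^{n^2}}{(b)_n (q/e)_n e^n} = \sum_{n=1}^{N} \begin{bmatrix} N\\n \end{bmatrix} \frac{(q)_{n-1} (b)_{N-n}\, b^n}{(b)_N (q/e)_n} - \sum_{n=1}^{N} \frac{bq^{n-1}}{1-bq^{n-1}},
\end{align*}
and the last step is to substitute $b=zq$. The one bookkeeping point here is the exponent merge $b^n q^{n^2}=z^n q^{n(n+1)}$ on the left, together with $(b)_{N-n}\to(zq)_{N-n}$, $(b)_N\to(zq)_N$, $b^n\to(zq)^n$ in the first sum on the right, and the simplification $bq^{n-1}/(1-bq^{n-1})\to zq^n/(1-zq^n)$ in the last sum. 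Assembling these pieces yields exactly the asserted identity.

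I expect no genuine obstacle: the single step that requires care is recognizing the telescoping product form of $(b/a)_n a^n$, so that the $a\to 0$ limit is evaluated correctly rather than by naively setting $a=0$ inside each Pochhammer factor. Everything else is a routine relabeling of Pochhammer symbols and a consolidation of $q$-powers under the substitution $b=zq$.
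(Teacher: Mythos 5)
Your proposal is correct and is exactly the paper's own route: the paper obtains this theorem by letting $a\rightarrow 0$ and then setting $b=zq$ in the finite analogue of Entry~3, and your careful evaluation of the limit via $(b/a)_n a^n=\prod_{k=0}^{n-1}(a-bq^k)$ supplies the details the paper leaves implicit. Your computation also confirms that the factor $(aq)^n$ in the stated identity is a typo for $(zq)^n$, which is what the substitution $b=zq$ actually produces.
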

Clearly, it can be observed that letting $N\rightarrow \infty$ in \eqref{finite analogue of one variable generalization of entry 4 equation} gives \eqref{one variable generalization of entry 4 equation}.  Also, $e \rightarrow 0$ allow us to recover a finite analogue of Entry 4 \eqref{entry 4} given by Dixit and Patel \cite[Theorem 3.4]{DixitPatel},
\begin{align*}
\sum_{n=1}^{N} \begin{bmatrix}
N\\
n
\end{bmatrix} \frac{(q)_{n-1} (-1)^{n-1} z^{n} q^{n(n+1)/2} }{(zq)_{n}} = \sum_{n=1}^N \frac{zq^{n} }{1-zq^{n}}.
\end{align*}

A one variable generalization of Entry 5 \eqref{entry 5} is stated below.

\begin{theorem}[One variable generalization of Entry 5]\label{one variable generalization of entry 5}
For $|a|<1$, one can see
\begin{align}\label{one variable generalization of entry 5 equation}
\sum_{n=1}^\infty \frac{(-1)^n(q)_{n-1}a^n q^{n(n+1)/2}}{(1-q^n) (a)_n (q/e)_n e^n} = -\sum_{n=1}^\infty \frac{a^n}{1-q^n} \sum_{k=1}^n \frac{q^k}{e-q^k}.
\end{align}
\end{theorem}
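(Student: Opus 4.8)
The plan is to derive \eqref{one variable generalization of entry 5 equation} from the one variable generalization of Entry 3, i.e.\ \eqref{one variable generalization of entry 3 equation}, by differentiating both sides with respect to $b$ and then setting $b=a$. This parallels the classical passage from Entry 3 \eqref{entry 3} to Entry 5 \eqref{entry 5}: the factor $(b/a)_n$ carries $(1-b/a)$, so both sides of \eqref{one variable generalization of entry 3 equation} vanish at $b=a$, and matching their first-order behaviour in $b-a$ produces exactly the claimed identity. Setting $b=a$ is legitimate because the domain $|b|<1$ of \eqref{one variable generalization of entry 3 equation} becomes the stated hypothesis $|a|<1$, which justifies termwise differentiation in a neighbourhood of $b=a$.

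First I would record the derivative that governs the left side. Near $b=a$ only the $j=0$ factor of $(b/a)_n=\prod_{j=0}^{n-1}(1-(b/a)q^j)$ vanishes, so $(b/a)_n=(1-b/a)(q)_{n-1}+O((1-b/a)^2)$; consequently the piece in which $(b)_n$ is differentiated drops out at $b=a$ and
$$\left.\frac{\mathrm{d}}{\mathrm{d}b}\frac{(b/a)_n}{(b)_n}\right|_{b=a}=\frac{-(q)_{n-1}}{a\,(a)_n}.$$
Applying this termwise to the left side of \eqref{one variable generalization of entry 3 equation} shows that its $b$-derivative at $b=a$ equals $-\tfrac{1}{a}$ times the left side of \eqref{one variable generalization of entry 5 equation}.

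Next I would differentiate the right side of \eqref{one variable generalization of entry 3 equation}. The delicate term is $\dfrac{(aq/be)_n\,b^n}{(1-q^n)(q/e)_n}$: by the product rule its $b$-derivative at $b=a$ splits into a part coming from $b^n$, which contributes a factor $n$, and a part coming from the logarithmic derivative of $(aq/be)_n$. Differentiating $\prod_{j=0}^{n-1}(1-(aq/be)q^j)$ and evaluating at $b=a$ (where $(aq/be)_n$ reduces to $(q/e)_n$), then re-indexing by $k=j+1$, yields the inner sum $\sum_{k=1}^n \dfrac{q^k}{e-q^k}$. Meanwhile the subtracted series $\sum_n b^n/(1-q^n)$ contributes $-\sum_n n\,a^{n-1}/(1-q^n)$, which cancels the factor-$n$ part exactly and leaves $\tfrac1a\sum_{n\ge1}\frac{a^n}{1-q^n}\sum_{k=1}^n\frac{q^k}{e-q^k}$.

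Equating the two $b$-derivatives at $b=a$ gives $-\tfrac1a$ times the left side of \eqref{one variable generalization of entry 5 equation} equal to $\tfrac1a\sum_{n\ge1}\frac{a^n}{1-q^n}\sum_{k=1}^n\frac{q^k}{e-q^k}$, and multiplying through by $-a$ yields \eqref{one variable generalization of entry 5 equation}. I expect the main obstacle to be the bookkeeping in the logarithmic derivative of $(aq/be)_n$ and, especially, verifying the clean cancellation of the two $n$-weighted sums on the right: it is precisely this cancellation that removes the spurious $n$ and leaves the single double sum $\sum_k q^k/(e-q^k)$ characteristic of Entry 5. As a sanity check one may let $e\to 0$, using $(q/e)_n e^n\sim(-1)^n q^{n(n+1)/2}$ and $\sum_{k=1}^n q^k/(e-q^k)\to -n$, to recover the classical Entry 5 \eqref{entry 5}.
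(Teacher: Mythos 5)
Your proposal is correct and is essentially the paper's own argument: the paper obtains Theorem \ref{one variable generalization of entry 5} by dividing both sides of \eqref{one variable generalization of entry 3 equation} by $1-b/a$ and letting $b\to a$, which is exactly the first-order expansion at $b=a$ that you phrase as differentiating in $b$ and evaluating at $b=a$. Your derivative computations (the factor $-(q)_{n-1}/(a(a)_n)$ on the left, and the cancellation of the two $n$-weighted sums on the right leaving $\sum_{k=1}^n q^k/(e-q^k)$) are the "simplify" step the paper leaves implicit, and they check out.
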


\begin{proof}
Divide both sides of \eqref{one variable generalization of entry 3 equation} by $1-b/a$ and then let $b \rightarrow a$ and simplify to get \eqref{one variable generalization of entry 5 equation}.
\end{proof}
The next identity is a finite analogue of Theorem \ref{one variable generalization of entry 5}.
\begin{theorem}[Finite analogue of Entry 5]\label{finite analogue of one variable generalization of entry 5}
For a natural number $N$, the following identity is true:
\begin{align*}
\sum_{n=1}^{N} \begin{bmatrix}
N\\
n
\end{bmatrix} \frac{(-1)^n (q)_{n-1}^2 a^{n} q^{n(n+1)/2} }{(a)_{n} (q/e)_{n} e^n} = -\sum_{n=1}^N \begin{bmatrix}
N\\
n
\end{bmatrix} \frac{(q)_{n-1} (a)_{N-n} a^n}{(a)_N } \left(\sum_{k=1}^n \frac{q^k}{e-q^{k}}\right).
\end{align*}
\end{theorem}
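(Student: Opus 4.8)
The plan is to mimic the passage from the one-variable Entry~3 to the one-variable Entry~5 \eqref{one variable generalization of entry 5 equation}: I divide both sides of the finite analogue of Entry~3, equation \eqref{finite analogue of one variable generalization of entry 3 equation}, by $1-b/a$ and then let $b\to a$. Write $f(a,b)$ and $g(a,b)$ for its left- and right-hand sides, so that $f(a,b)=g(a,b)$ identically in $a,b$ (and in $e,q$).

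On the left the division is harmless. Using $(b/a)_n=(1-b/a)(bq/a)_{n-1}$,
\begin{align*}
\frac{f(a,b)}{1-b/a}=\sum_{n=1}^{N}\begin{bmatrix}N\\n\end{bmatrix}\frac{(-1)^{n}(q)_{n-1}(bq/a)_{n-1}\,a^{n}q^{n(n+1)/2}}{(b)_{n}(q/e)_{n}e^{n}},
\end{align*}
and letting $b\to a$, so that $(bq/a)_{n-1}\to(q)_{n-1}$ and $(b)_n\to(a)_n$, produces exactly the left-hand side of the theorem. The right-hand side $g/(1-b/a)$ is a $0/0$ form: at $b=a$ the factor $(b/a)_n$ makes $f(a,a)=0$, hence $g(a,a)=0$ for every $a$. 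Thus L'H\^{o}pital's rule in the variable $b$ gives
\begin{align*}
\lim_{b\to a}\frac{g(a,b)}{1-b/a}=-a\,\partial_{b}g(a,b)\big|_{b=a}.
\end{align*}

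The decisive simplification is that $\partial_{b}g|_{b=a}$ need not be expanded factor by factor. Since $g(a,a)\equiv 0$, differentiating along the diagonal yields $\partial_{a}g|_{b=a}+\partial_{b}g|_{b=a}=0$, whence $\partial_{b}g|_{b=a}=-\partial_{a}g|_{b=a}$. The variable $a$ occurs in $g$ \emph{only} inside the factor $(aq/be)_n$, the subtracted sum $\sum_n bq^{n-1}/(1-bq^{n-1})$ being independent of $a$, and a direct logarithmic differentiation gives
\begin{align*}
\partial_{a}\log(aq/be)_{n}\big|_{b=a}=-\frac1a\sum_{k=1}^{n}\frac{q^{k}}{e-q^{k}}.
\end{align*}
Since the $n$-th term of the first sum of $g(a,a)$ equals $\begin{bmatrix}N\\n\end{bmatrix}(q)_{n-1}(a)_{N-n}a^{n}/(a)_{N}$, we then obtain
\begin{align*}
-a\,\partial_{b}g(a,b)\big|_{b=a}=a\,\partial_{a}g(a,b)\big|_{b=a}=-\sum_{n=1}^{N}\begin{bmatrix}N\\n\end{bmatrix}\frac{(q)_{n-1}(a)_{N-n}a^{n}}{(a)_{N}}\sum_{k=1}^{n}\frac{q^{k}}{e-q^{k}},
\end{align*}
which is precisely the right-hand side of the theorem.

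The step I expect to be the main obstacle is the cancellation of every $e$-free contribution to $\partial_{b}g|_{b=a}$, namely those arising from $(b)_{N-n}$, $b^{n}$ and $(b)_{N}$ in the first sum together with the entire second sum. In the infinite analogue this cancellation is automatic, the only relevant $b$-dependence there being the explicit power $b^{n}$ cancelled against the subtracted $\sum b^{n}/(1-q^{n})$; in the finite setting the extra $q$-Pochhammer factors genuinely contribute terms $a\sum_{j=N-n}^{N-1}q^{j}/(1-aq^{j})$. The diagonal relation $g(a,a)\equiv 0$ is exactly what bundles all of these into the identity $\partial_{b}g|_{b=a}=-\partial_{a}g|_{b=a}$, so that no separate summation identity has to be established; equivalently, the required cancellation is nothing but the $a$-derivative of the already-proven $b=a$ specialization $\sum_{n}\begin{bmatrix}N\\n\end{bmatrix}(q)_{n-1}(a)_{N-n}a^{n}/(a)_{N}=\sum_{n}aq^{n-1}/(1-aq^{n-1})$ of \eqref{finite analogue of one variable generalization of entry 3 equation}.
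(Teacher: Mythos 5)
Your proof is correct, and it follows the paper's overall template --- divide the finite analogue of Entry 3, \eqref{finite analogue of one variable generalization of entry 3 equation}, by $1-b/a$ and let $b\to a$ --- but it resolves the resulting $0/0$ limit on the right-hand side by a genuinely different mechanism. The paper first invokes \eqref{finite analogue of dixit maji} with $b=0$, $c=1$ to rewrite the $a$-free sum $\sum_{n=1}^N bq^{n-1}/(1-bq^{n-1})$ as $\sum_{n=1}^N\begin{bmatrix}N\\n\end{bmatrix}(q)_{n-1}(b)_{N-n}b^n/(b)_N$; this folds the entire right-hand side into a single sum whose $n$-th term carries the factor $(aq/be)_n/(q/e)_n-1$, which vanishes at $b=a$, so the limit can be taken term by term and reduces to the same logarithmic derivative of $(aq/be)_n$ that you compute. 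You instead apply L'H\^{o}pital globally and convert $\partial_b g|_{b=a}$ into $-\partial_a g|_{b=a}$ via the diagonal relation $g(a,a)\equiv 0$, after which only the single $a$-dependent factor $(aq/be)_n$ has to be differentiated. The two routes are essentially dual, as your closing remark anticipates: the cancellation your chain-rule identity supplies is exactly the $a$-derivative of the $b=a$ specialization of \eqref{finite analogue of one variable generalization of entry 3 equation}, which is in turn precisely the $b=0$, $c=1$ case of \eqref{finite analogue of dixit maji} that the paper cites. Your version has the advantage of not requiring that auxiliary identity as an explicit input, while the paper's version makes the combined intermediate identity explicit before passing to the termwise limit.
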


\begin{proof}
Utilizing \eqref{finite analogue of dixit maji} with $b=0$ and $c=1$ in Theorem \ref{finite analogue of one variable generalization of entry 3}, we get
\begin{align*}
\sum_{n=1}^{N} \begin{bmatrix}
N\\
n
\end{bmatrix} \frac{(-1)^n (q)_{n-1} (b/a)_{n} a^{n} q^{n(n+1)/2} }{(b)_{n} (q/e)_{n} e^n} = \sum_{n=1}^N \begin{bmatrix}
N\\
n
\end{bmatrix} \frac{(q)_{n-1} (b)_{N-n}  b^n}{(b)_N } \left(\frac{(aq/be)_{n}}{(q/e)_{n}}-1 \right).
\end{align*}
Now divide both sides by $1-b/a$ in the above expression and then let $b \rightarrow a$ to get the result.

\end{proof}

\section{Proof of Theorem \ref{finite analogue of generalization of garvan's identity} and some of its corollaries}
In this section, our goal is to prove Theorem \ref{finite analogue of generalization of garvan's identity}. However, before proving this theorem, it is essential to introduce a few key results. These lemmas plays a vital role in the proof of our theorem.
\begin{lemma}\label{generalization of lemma 9.3 Untrodden}
For $N\in\mathbb{N}$, define
\begin{align*}
S_1(z,d,q,N):= \sum_{n=1}^N \begin{bmatrix}
N\\
n
\end{bmatrix}_{q^2} \frac{(q^2;q^2)_n (dq^2;q^2)_{n-1} (zq;q^2)_{N-n}(zq)^n}{(zq^2;q^2)_n(zq;q^2)_N},
\end{align*}
then
\begin{align*}
S_1(z,d,q,N)= \sum_{n=1}^N \begin{bmatrix}
N\\
n
\end{bmatrix}_{q^2} \frac{(q^2;q^2)_n (dq;q^2)_{n-1} (zq^2;q^2)_{N-n}z^nq^{2n-1}}{(zq;q^2)_n(zq^2;q^2)_N}.
\end{align*}
\end{lemma}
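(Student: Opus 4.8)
The plan is to reduce both members to terminating ${}_{3}\phi_{2}$ series in the base $q^{2}$ and to connect them by a single application of the corollary \eqref{corollary of finite heine transformation} of Andrews' finite Heine transformation. I would work throughout in base $q^{2}$ and cancel the factor $(q^2;q^2)_n$ appearing in each numerator against the one concealed in $\begin{bmatrix} N \\ n \end{bmatrix}_{q^2}$. Then I would invoke \eqref{basic formula 2} in base $q^{2}$: for the defining sum $S_1$ I would rewrite $\dfrac{(q^2;q^2)_N (zq;q^2)_{N-n}}{(q^2;q^2)_{N-n}(zq;q^2)_N}$ by taking $x=z/q$, and for the claimed right member I would rewrite $\dfrac{(q^2;q^2)_N (zq^2;q^2)_{N-n}}{(q^2;q^2)_{N-n}(zq^2;q^2)_N}$ by taking $x=z$. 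In both cases the stray powers of $z$ and $q$ should collapse to a clean factor $(q^2)^n$, leaving terminating sums ready to be recast as ${}_{3}\phi_{2}$'s.

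Next I would shift the index $n\mapsto n+1$ in order to absorb the factors $(dq^2;q^2)_{n-1}$ and $(dq;q^2)_{n-1}$. Writing $M=N-1$, this expresses each member as an explicit scalar times a terminating ${}_{3}\phi_{2}$ in base $q^{2}$:
\begin{align*}
S_1(z,d,q,N)=\frac{(1-q^{-2N})\,q^2}{(1-q^{1-2N}/z)(1-zq^2)}\;{}_{3}\phi_{2}\left[\begin{matrix} q^{-2M}, & dq^2, & q^2 \\ & q^{1-2M}/z, & zq^4 \end{matrix}\,;q^2,q^2\right],
\end{align*}
while the claimed right member should equal
\begin{align*}
\frac{(1-q^{-2N})\,q}{(1-q^{-2N}/z)(1-zq)}\;{}_{3}\phi_{2}\left[\begin{matrix} q^{-2M}, & dq, & q^2 \\ & q^{-2M}/z, & zq^3 \end{matrix}\,;q^2,q^2\right].
\end{align*}
Here the third upper entry $q^{2}$ merely furnishes the $(q^2;q^2)_m$ that cancels the $(q^2;q^2)_m$ built into the definition of ${}_{3}\phi_{2}$, so that each series is exactly the sum in question.

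The decisive step would be to apply \eqref{corollary of finite heine transformation} in base $q^{2}$ (with $N$ replaced by $M$) to the second ${}_{3}\phi_{2}$. Writing its lower entry $q^{-2M}/z$ as $q^{2-2M}/\tau$ forces $\tau=zq^2$, and then the assignment $\alpha=dq$, $\beta=q^2$, $\gamma=zq^3$ makes the corollary applicable, with the base itself playing the role of the upper entry $\beta$. The parameter dictionary then gives $\alpha\beta\tau/\gamma=dq^2$, $\beta\tau=zq^4$ and $\beta q^{2-2M}/\gamma=q^{1-2M}/z$, so the transformed series is precisely the first ${}_{3}\phi_{2}$, and the transformation reads
\begin{align*}
{}_{3}\phi_{2}\left[\begin{matrix} q^{-2M}, & dq, & q^2 \\ & q^{-2M}/z, & zq^3 \end{matrix}\,;q^2,q^2\right]=\frac{(zq;q^2)_M (zq^4;q^2)_M}{(zq^3;q^2)_M(zq^2;q^2)_M}\;{}_{3}\phi_{2}\left[\begin{matrix} q^{-2M}, & dq^2, & q^2 \\ & q^{1-2M}/z, & zq^4 \end{matrix}\,;q^2,q^2\right].
\end{align*}

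Substituting this into the second display and cancelling the common ${}_{3}\phi_{2}$ together with the common factor $1-q^{-2N}$, the asserted identity would collapse to the elementary scalar equality
\begin{align*}
\frac{q^2}{(1-q^{1-2N}/z)(1-zq^2)}=\frac{q}{(1-q^{-2N}/z)(1-zq)}\cdot\frac{(zq;q^2)_M (zq^4;q^2)_M}{(zq^3;q^2)_M(zq^2;q^2)_M}.
\end{align*}
Telescoping the Pochhammer quotients via $(zq;q^2)_M=(1-zq)(zq^3;q^2)_{M-1}$ and $(zq^2;q^2)_M=(1-zq^2)(zq^4;q^2)_{M-1}$ turns the last fraction into $\dfrac{1-zq}{1-zq^{2N-1}}\cdot\dfrac{1-zq^{2N}}{1-zq^2}$, and a one-line simplification (clearing $1-q^{1-2N}/z$ and $1-q^{-2N}/z$) should show that both sides equal $\dfrac{-z\,q^{2N+1}}{(1-zq^{2N-1})(1-zq^2)}$, completing the argument. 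I expect the only non-mechanical ingredient to be recognising the correct transformation together with the dictionary $\tau=zq^2$, $\beta=q^2$; once the base is allowed to sit as an upper parameter, the matching of the series is automatic and everything else is routine bookkeeping with $q$-Pochhammer symbols.
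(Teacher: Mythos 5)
Your proposal is correct and follows essentially the same route as the paper: both reduce the two sums to terminating ${}_{3}\phi_{2}$'s in base $q^2$ via the rewriting identity \eqref{basic formula 2} (equivalently \eqref{q-series identity}) and link them by one application of \eqref{corollary of finite heine transformation} with the base $q^2$ serving as the upper parameter $\beta$. The only difference is the direction — you transform the right-hand series (with $\alpha=dq$, $\gamma=zq^3$, $\tau=zq^2$) into the left-hand one, while the paper goes the other way with $\alpha=dq^2$, $\gamma=zq^4$, $\tau=zq$ — and your parameter dictionary and final scalar verification both check out.
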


\begin{proof}
We start with
\begin{align*}
S_1(z,d,q,N)&= \sum_{n=1}^N \begin{bmatrix}
N\\
n
\end{bmatrix}_{q^2} \frac{(q^2;q^2)_n (dq^2;q^2)_{n-1} (zq;q^2)_{N-n}(zq)^n}{(zq^2;q^2)_n(zq;q^2)_N} \nonumber \\
&=\sum_{n=0}^{N-1} \frac{(q^2;q^2)_N (dq^2;q^2)_{n} (zq;q^2)_{N-n-1}(zq)^{n+1}}{(q^2;q^2)_{N-n-1}(zq^2;q^2)_{n+1}(zq;q^2)_N}\nonumber\\
&= \frac{zq(1-q^{2N})}{(1-zq^{2N-1})(1-zq^2)}\sum_{n=0}^{N-1}\frac{(q^2;q^2)_{N-1} (zq;q^2)_{N-n-1}(dq^2;q^2)_{n} (zq)^{n}}{(zq;q^2)_{N-1}(q^2;q^2)_{N-n-1}(zq^4;q^2)_{n}}.
\end{align*}
Now use \cite[p.~351,~Appendix (I.11)]{GasperRahman}
\begin{align}\label{q-series identity}
\frac{(b;q^2)_N(a;q^2)_{N-n}}{(a;q^2)_N(b;q^2)_{N-n}}\left(\frac{a}{b}\right)^n=\frac{(q^{2-2N}/b;q^2)_n}{(q^{2-2N}/a;q^2)_n}
\end{align}
with $a=zq$ and $b=q^2$ and $N$ replaced by $N-1$. We have
\begin{align}\label{s_1(z,d,q,N)}
S_1(z,d,q,N)&=\frac{zq(1-q^{2N})}{(1-zq^{2N-1})(1-zq^2)}\sum_{n=0}^{N-1}\frac{(q^{2-2N};q^2)_n(dq^2;q^2)_{n}}{(q^{3-2N}/z;q^2)_n(zq^4;q^2)_{n}}q^{2n}\nonumber\\
&=\frac{zq(1-q^{2N})}{(1-zq^{2N-1})(1-zq^2)}{}_{3}\phi_{2}\left[\begin{matrix} q^{2-2N}, & dq^2 , & q^2 \\ & zq^4, & \frac{q^{3-2N}}{z} \end{matrix} \, ; q^2, q^2  \right].
\end{align}
At this point, make use of \eqref{corollary of finite heine transformation} with $N$ and $q$ replaced by $N-1$ and $q^2$ respectively, and take $\alpha=dq^2,~\beta=q^2,~\gamma=zq^4$ and $\tau=zq$ in \eqref{s_1(z,d,q,N)} yields
\begin{align*}
S_1(z,d,q,N)&=\frac{zq(1-q^{2N})}{(1-zq^{2N-1})(1-zq^2)}\frac{(zq^2;q^2)_{N-1} (zq^3;q^2)_{N-1}}{(zq^4;q^2)_{N-1}(zq;q^2)_{N-1}} {}_{3}\phi_{2}\left[\begin{matrix} q^{2-2N}, & dq , & q^2 \\ & zq^3, & \frac{q^{2-2N}}{z} \end{matrix} \, ; q^2, q^2  \right]\\
&=\frac{zq(1-q^{2N})}{(1-zq)(1-zq^{2N})}\sum_{n=0}^{N-1}\frac{(q^{2-2N};q^2)_n (dq;q^2)_n}{(q^{2-2N}/z)_n(zq^3;q^2)_n}q^{2n}.
\end{align*}
Here again we use \eqref{q-series identity} with $N$ replace by $N-1,~a=zq^2$ and $b=q^2$, we get
\begin{align*}
S_1(z,d,q,N)&=\frac{zq(1-q^{2N})}{(1-zq)(1-zq^{2N})}\sum_{n=0}^{N-1}\frac{(q^2;q^2)_{N-1} (zq^2;q^2)_{N-n-1} (dq;q^2)_n}{ (zq^2;q^2)_{N-1} (q^2;q^2)_{N-n-1}(zq^3;q^2)_n}\left(zq^2\right)^n\\
&=\sum_{n=1}^{N}\begin{bmatrix}
N\\n
\end{bmatrix}_{q^2} \frac{(q^2;q^2)_n (zq^2;q^2)_{N-n} (dq;q^2)_{n-1}}{ (zq^2;q^2)_{N} (zq;q^2)_n}z^nq^{2n-1}.
\end{align*}
This proves the result.
\end{proof}

\begin{lemma}\label{generalization of lemma 9.4 Untrodden}
For $N\in\mathbb{N}$, we have
\begin{align*}
S_1(z,d,q,N)= \sum_{n=1}^N \begin{bmatrix}
N\\
n
\end{bmatrix}_{q^2} \left( \frac{(dq)_{2n-2} z^{2n-1} q^{n(2n-1)}}{(zq)_{2n-1}} + \frac{(dq)_{2n-1} z^{2n} q^{n(2n+1)}}{(zq)_{2n}} \right)\frac{(q^2;q^2)_n}{(dzq^{2N+1};q^2)_n}.
\end{align*}
\end{lemma}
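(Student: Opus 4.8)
The plan is to prove the lemma by starting from its right-hand side, collapsing the two summands into a single $q^2$-sum, and then matching the outcome against the hypergeometric representation of $S_1(z,d,q,N)$ already produced in the proof of Lemma \ref{generalization of lemma 9.3 Untrodden}. First I would split every base-$q$ Pochhammer on the right into base $q^2$ via $(x;q)_{2m}=(x;q^2)_m(xq;q^2)_m$, so that $(dq;q)_{2n-2}=(dq;q^2)_{n-1}(dq^2;q^2)_{n-1}$, $(dq;q)_{2n-1}=(dq;q^2)_n(dq^2;q^2)_{n-1}$, $(zq;q)_{2n-1}=(zq;q^2)_n(zq^2;q^2)_{n-1}$ and $(zq;q)_{2n}=(zq;q^2)_n(zq^2;q^2)_n$. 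Pulling the first summand out of the bracket and invoking the elementary identity
\begin{align*}
1+\frac{(1-dq^{2n-1})\,zq^{2n}}{1-zq^{2n}}=\frac{1-dzq^{4n-1}}{1-zq^{2n}},
\end{align*}
the two terms coalesce and the right-hand side becomes the single $q^2$-sum
\begin{align*}
\sum_{n=1}^N \begin{bmatrix} N\\ n \end{bmatrix}_{q^2}\frac{(q^2;q^2)_n (dq;q^2)_{n-1}(dq^2;q^2)_{n-1}(1-dzq^{4n-1})}{(dzq^{2N+1};q^2)_n(zq;q^2)_n (zq^2;q^2)_{n}}\,z^{2n-1}q^{n(2n-1)}.
\end{align*}

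Next I would cast this as a terminating series in base $q^2$. Writing $\begin{bmatrix} N\\ n \end{bmatrix}_{q^2}(q^2;q^2)_n=(q^2;q^2)_N/(q^2;q^2)_{N-n}$ and using the reflection formula \eqref{basic formula 1} (with $q$ replaced by $q^2$) to rewrite $1/(q^2;q^2)_{N-n}$ through the terminating factor $(q^{-2N};q^2)_n$, the summand takes a well-poised shape, since $(1-dzq^{4n-1})=1-a\,(q^2)^{2n}$ with $a=dzq^{-1}$ is precisely the well-poised factor $(1-a)\,(q^2a^{1/2};q^2)_n(-q^2a^{1/2};q^2)_n/\big((a^{1/2};q^2)_n(-a^{1/2};q^2)_n\big)$. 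I would then reduce the resulting series by the same machinery used in the proof of Lemma \ref{generalization of lemma 9.3 Untrodden} — the finite Heine transformation \eqref{finite Heine transformation}, its corollary \eqref{corollary of finite heine transformation}, the ${}_3\phi_2$ transformation \eqref{3_phi_2 finite}, and the simplification \eqref{q-series identity}, all with $q\mapsto q^2$ and $N\mapsto N-1$ — aiming to land on the ${}_3\phi_2$ representation
\begin{align*}
S_1(z,d,q,N)=\frac{zq(1-q^{2N})}{(1-zq)(1-zq^{2N})}\sum_{n=0}^{N-1}\frac{(q^{2-2N};q^2)_n (dq;q^2)_n}{(q^{2-2N}/z;q^2)_n(zq^3;q^2)_n}q^{2n}
\end{align*}
isolated there. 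Once the two expressions agree, the lemma follows.

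The step I expect to be the main obstacle is the treatment of the well-poised factor $(1-dzq^{4n-1})$. Its appearance means the combined sum is not an ordinary ${}_3\phi_2$ but carries an extra parameter pair $\pm q^2\sqrt{dz/q}$, so the passage to the ${}_3\phi_2$ form of $S_1$ is not a single Heine move: splitting the factor expresses the sum as a short combination of terminating ${}_4\phi_3$'s, and reaching the target ${}_3\phi_2$ then calls for a Watson-type reduction (or an iterated application of the listed finite ${}_3\phi_2$ and ${}_4\phi_3$ transformations), with the powers of $q$ and $z$ and the terminating parameter $q^{-2N}$ all kept in step. To forestall sign and normalization errors I would first check the identity at $N=1$, where both sides collapse to $zq(1-q^2)/\big((1-zq)(1-zq^2)\big)$, and only then carry out the general parameter bookkeeping.
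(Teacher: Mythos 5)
Your reduction of the right-hand side is correct: factoring out the first summand and using $1+\frac{(1-dq^{2n-1})zq^{2n}}{1-zq^{2n}}=\frac{1-dzq^{4n-1}}{1-zq^{2n}}$ does collapse the bracket into the single sum you display, and your $N=1$ sanity check is accurate. This is in fact the paper's own computation read in reverse. However, the decisive step --- passing between that combined sum, which carries the very-well-poised factor $1-dzq^{4n-1}$, and the plain single sum $\sum_{n=1}^{N}\frac{(q^2;q^2)_N (dq;q^2)_{n-1}(zq^2;q^2)_{N-n}z^nq^{2n-1}}{(q^2;q^2)_{N-n}(zq;q^2)_n(zq^2;q^2)_N}$ (the alternate form of $S_1$ from Lemma \ref{generalization of lemma 9.3 Untrodden}) --- is exactly the content of the lemma, and you leave it as an expectation rather than a proof. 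You propose a ``Watson-type reduction'' or an iteration of the transformations \eqref{finite Heine transformation}, \eqref{corollary of finite heine transformation}, \eqref{3_phi_2 finite}, \eqref{q-series identity}, but none of the tools listed in Section 3 handles a very-well-poised series, and you do not verify that a Watson transformation's parameters can be specialized to produce this identity. That is a genuine gap: the lemma has been restated, not proved.

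The ingredient the paper uses to close this gap is the finite analogue of the Rogers--Fine identity, \cite[Lemma 9.2]{DEMS}, applied with $q\mapsto q^2$, $\alpha=d/q$, $\beta=zq$, $\tau=zq^2$. A single application of that identity converts the plain sum above directly into the sum containing the factor $1-dzq^{4n+3}$ (after shifting the index), which is then split back into the two-term bracket; combining with Lemma \ref{generalization of lemma 9.3 Untrodden} finishes the proof. So the paper works in the forward direction (plain sum $\to$ well-poised sum) via one known identity, whereas your plan requires inverting that passage with machinery you have not pinned down. If you want to salvage your route, you should either quote the finite Rogers--Fine identity explicitly and check the parameter specialization, or actually carry out the Watson-type reduction with the terminating parameter $q^{-2N}$ and the pair $\pm q\sqrt{dzq^{-1}}\cdot q^2$ tracked through to the end; as written, the ``main obstacle'' you identify is the entire theorem.
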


\begin{proof} Replace $q$ by $q^2$ and let $\alpha=\frac{d}{q},~\beta=zq$ and $\tau=zq^2$ in the finite analogue of the Rogers-Fine identity \cite[Lemma 9.2]{DEMS} and then multiplying both sides of the resulting identity by $\frac{(1-dzq^{2N+3})}{(1-zq)(1-zq^{2N+2})}$, we get
\begin{align*}
&(1-dzq^{2N+3})\sum_{n=0}^N \frac{(q^2;q^2)_N (dq;q^2)_n (zq^2;q^2)_{N-n} z^n q^{2n} }{(q^2;q^2)_{N-n} (zq;q^2)_{n+1} (zq^2;q^2)_{N+1} } \\
&=\sum_{n=0}^N \frac{(q^2;q^2)_N (dq;q^2)_n (dq^2;q^2)_n (dzq^5;q^2)_{N} z^{2n} q^{3n+2n^2} (1-dzq^{4n+3})}{(q^2;q^2)_{N-n} (zq;q^2)_{n+1} (zq^2;q^2)_{n+1} (dzq^5;q^2)_{N+n}} \\
&=\sum_{n=0}^N \frac{(dq;q)_{2n} z^{2n} q^{n(2n+3)} (1-dzq^{4n+3})}{(zq;q)_{2n+2}}\frac{(q^2;q^2)_N (dzq^5;q^2)_{N}}{(q^2;q^2)_{N-n} (dzq^5;q^2)_{N+n}}\\
&=\sum_{n=0}^N \left[ \frac{(dq;q)_{2n} z^{2n} q^{n(2n+3)}}{(zq;q)_{2n+1}} + \frac{(dq;q)_{2n+1} z^{2n+1} q^{(n+2)(2n+1)}}{(zq;q)_{2n+2}} \right] \frac{(q^2;q^2)_N}{(q^2;q^2)_{N-n} (dzq^{2N+5};q^2)_{n}}.
\end{align*}
Now re-indexing both sums and then multiplying both sides by $zq(1-q^{2N+2})$ and further replace $N$ by $N-1$, we get
\begin{align*}
&\sum_{n=1}^{N} \frac{(q^2;q^2)_N (dq;q^2)_{n-1} (zq^2;q^2)_{N-n} z^{n} q^{2n-1}}{(q^2;q^2)_{N-n} (zq;q^2)_{n} (zq^2;q^2)_{N}}\\
&=\sum_{n=1}^{N} \left[ \frac{(dq;q)_{2n-2} z^{2n-2} q^{(n)(2n+1)}}{(zq;q)_{2n-1}} + \frac{(dq;q)_{2n-1} z^{2n} q^{(n)(2n+1)}}{(zq;q)_{2n}} \right] \frac{(q^2;q^2)_N}{(q^2;q^2)_{N-n} (dzq^{2N+3};q^2)_{n}}.
\end{align*}
The result follows from the above equation and Lemma \ref{generalization of lemma 9.3 Untrodden}.
\end{proof}

Now we are ready to prove Theorem \ref{finite analogue of generalization of garvan's identity}.

\begin{proof}[Theorem \ref{finite analogue of generalization of garvan's identity}][]
Take $e=1$, replace $q$ by $q^2$ then $z$ by $z/q$ and put $c=z$ in Corollary \ref{finite analogue of 4-variable generalization of Andrews Garvan Liang}. This yields
\begin{align*}
\sum_{n=1}^N \begin{bmatrix}
N\\
n
\end{bmatrix}_{q^2} \frac{(-1)^n(q^2;q^2)_n (z/d;q^2)_n (zd)^n q^{n^2}}{(z-d)(zq;q^2)_n(zq^2;q^2)_n}=  \sum_{n=1}^N \begin{bmatrix}
N\\
n
\end{bmatrix}_{q^2} \frac{(q^2;q^2)_n(zq^2;q^2)_{N-n}(dq;q^2)_{n-1}z^nq^{2n-1}}{(zq^2;q^2)_N(zq;q^2)_{n}}.
\end{align*}
Now the result follows from above equation and Lemma \ref{generalization of lemma 9.3 Untrodden} and making use of \ref{generalization of lemma 9.4 Untrodden}.
\end{proof}
Here are some immediate implications of Theorem \ref{finite analogue of generalization of garvan's identity}.

\begin{corollary}\label{generalization of corollary 9.5 untrodden}
For $N \in \mathbb{N}$, we have 
\begin{align*}
&\sum_{n=1}^N\begin{bmatrix}
N\\n 
\end{bmatrix}_{q^2} \frac{(-1)^{n-1}(q^2/d;q^2)_{n-1} d^{n-1} q^{n^2}}{(q;q^2)_n} = \sum_{n=1}^N\begin{bmatrix}
N\\n 
\end{bmatrix}_{q^2} \frac{(dq^2;q^2)_{n-1} (q;q^2)_{N-n} q^{n}}{(q;q^2)_N} \nonumber\\
& =\sum_{n=1}^N \frac{(dq;q^2)_{n-1} q^{2n-1}}{(q;q^2)_n} =\sum_{n=1}^N\begin{bmatrix}
N\\n 
\end{bmatrix}_{q^2} \frac{(q^2;q^2)_{n-1} (dq)_{2n-1}(1-dq^{4n-1})q^{n(2n-1)}}{(dq^{2N+1};q^2)_n (q)_{2n-1} (1-dq^{2n-1})}.
\end{align*}
\end{corollary}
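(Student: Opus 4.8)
The plan is to specialize Theorem \ref{finite analogue of generalization of garvan's identity} at $z=1$ and then recognize all four displayed sums as one and the same quantity, namely $S_1(1,d,q,N)$ from Lemma \ref{generalization of lemma 9.3 Untrodden}. Writing $S:=S_1(1,d,q,N)$, I would prove the four-fold equality by matching each summand against one of the available forms of $S$, so that no new analytic input beyond the specialization $z=1$ is required.

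First I would handle the second and third sums, which cost nothing. Setting $z=1$ in the defining expression for $S_1$ in Lemma \ref{generalization of lemma 9.3 Untrodden} makes the factor $(zq^2;q^2)_n$ in the denominator equal to $(q^2;q^2)_n$, which cancels the $(q^2;q^2)_n$ in the numerator; what survives is exactly the second sum, so it equals $S$. Likewise, setting $z=1$ in the transformed form of $S_1$ supplied by the same lemma and expanding the $q^2$-binomial coefficient cancels all the base-$q^2$ Pochhammer symbols except $(q;q^2)_n$, leaving precisely the third sum. Hence the second and third sums both equal $S$.

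Next I would deal with the fourth sum via the right-hand side of the theorem. At $z=1$ the bracketed sum on the right of Theorem \ref{finite analogue of generalization of garvan's identity} is, by construction, the $z=1$ case of the expression in Lemma \ref{generalization of lemma 9.4 Untrodden}, so it collapses to $S$. To identify it with the fourth sum I would simplify the bracket: combining the two fractions over the common denominator $(q)_{2n}=(q)_{2n-1}(1-q^{2n})$ and using $(dq)_{2n-1}=(dq)_{2n-2}(1-dq^{2n-1})$ produces the numerator collapse $(1-q^{2n})+(1-dq^{2n-1})q^{2n}=1-dq^{4n-1}$, after which the relation $(q^2;q^2)_n=(1-q^{2n})(q^2;q^2)_{n-1}$ and the rewriting $(dq)_{2n-2}=(dq)_{2n-1}/(1-dq^{2n-1})$ reproduce the fourth summand verbatim. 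This gives the fourth sum $=S$. Finally, for the first sum I would take the left-hand side of the theorem at $z=1$, rewrite $(1/d;q^2)_n=-\tfrac{1-d}{d}\,(q^2/d;q^2)_{n-1}$ so as to cancel the troublesome $(1-d)$ in the denominator and generate the factor $(-1)^{n-1}d^{n-1}$, and then use the odd/even splitting $(q;q)_{2n}=(q;q^2)_n(q^2;q^2)_n$ to reduce $(q^2;q^2)_n/(q;q)_{2n}$ to $1/(q;q^2)_n$; this turns the left side into the first sum. Since the theorem equates its two sides, the first sum also equals $S$, and all four sums coincide.

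The only delicate point is bookkeeping in the two Pochhammer simplifications—verifying the numerator collapse $(1-q^{2n})+(1-dq^{2n-1})q^{2n}=1-dq^{4n-1}$ and, throughout, keeping base-$q$ symbols such as $(dq)_{2n-1}$ and $(q)_{2n-1}$ distinct from the base-$q^2$ symbols—but this is routine, since all the genuine content is already carried by Lemmas \ref{generalization of lemma 9.3 Untrodden} and \ref{generalization of lemma 9.4 Untrodden} together with Theorem \ref{finite analogue of generalization of garvan's identity}, and the corollary is obtained purely by the specialization $z=1$.
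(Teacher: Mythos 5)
Your proposal is correct and follows essentially the same route as the paper: the paper's proof is precisely to substitute $z=1$ into Theorem \ref{finite analogue of generalization of garvan's identity}, Lemma \ref{generalization of lemma 9.3 Untrodden} and Lemma \ref{generalization of lemma 9.4 Untrodden} and combine the results, which is what you do (with the Pochhammer bookkeeping, including the collapse $(1-q^{2n})+(1-dq^{2n-1})q^{2n}=1-dq^{4n-1}$ and the rewriting $(1/d;q^2)_n=-\tfrac{1-d}{d}(q^2/d;q^2)_{n-1}$, carried out correctly).
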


\begin{proof}
Substituting $z=1$ in Theorem \ref{finite analogue of generalization of garvan's identity}, Lemma \ref{generalization of lemma 9.3 Untrodden} and \ref{generalization of lemma 9.4 Untrodden}, then combining the result together, we get the required result.
\end{proof}
Substituting $d=1$ in Corollary \ref{generalization of corollary 9.5 untrodden} gives \cite[Corollary 9.5]{DEMS}. Also, letting $N \rightarrow \infty$ in Corollary \ref{generalization of corollary 9.5 untrodden} gives a one variable generalization to \cite[p.~345,~Remark 3]{DM} stated below.

\begin{corollary} We have
\begin{align*}
\sum_{n=1}^\infty \frac{(-1)^{n}(1/d;q^2)_n d^n q^{n^2}}{(1-d)(q;q)_{2n}} &= \sum_{n=1}^\infty \frac{(dq^2;q^2)_{n-1} q^{n}}{(q^2;q^2)_n} =\sum_{n=1}^\infty \frac{(dq;q^2)_{n-1}q^{2n-1}}{(q;q^2)_n} \nonumber\\
&  =\sum_{n=1}^\infty \frac{(dq)_{2n-1}(1-dq^{4n-1})q^{n(2n-1)}}{ (q)_{2n-1} (1-q^{2n}) (1-dq^{2n-1})}.
\end{align*}
\end{corollary}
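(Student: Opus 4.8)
The plan is to obtain this corollary simply by letting $N \to \infty$ in Corollary \ref{generalization of corollary 9.5 untrodden}, so the whole task reduces to computing the termwise limits of the four finite sums and checking that the limit may be passed inside each sum. Since $|q| < 1$ (and $|d| < 1$ for convergence), each summand is dominated uniformly in $N$ by an absolutely summable sequence, thanks to the factors $q^{n^2}$ and $q^{n(2n-1)}$ (and the geometric factors $q^n$, $q^{2n-1}$ in the two middle sums), so dominated convergence justifies the interchange of $\lim_{N\to\infty}$ with $\sum_n$; I would dispatch this with a one-line remark.

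The main computation is recording how the $N$-dependent factors behave. First I would use
\begin{align*}
\begin{bmatrix} N \\ n \end{bmatrix}_{q^2} = \frac{(q^2;q^2)_N}{(q^2;q^2)_n (q^2;q^2)_{N-n}} \xrightarrow{N\to\infty} \frac{1}{(q^2;q^2)_n},
\end{align*}
together with $(q;q^2)_{N-n}/(q;q^2)_N \to 1$ and $(dq^{2N+1};q^2)_n \to 1$ (as $dq^{2N+1}\to 0$). Applying these to the second, third and fourth finite sums, they converge respectively to
\begin{align*}
\sum_{n=1}^\infty \frac{(dq^2;q^2)_{n-1} q^{n}}{(q^2;q^2)_n}, \qquad \sum_{n=1}^\infty \frac{(dq;q^2)_{n-1} q^{2n-1}}{(q;q^2)_n},
\end{align*}
and, after using $(q^2;q^2)_{n-1}/(q^2;q^2)_n = 1/(1-q^{2n})$ in the fourth sum,
\begin{align*}
\sum_{n=1}^\infty \frac{(dq)_{2n-1}(1-dq^{4n-1})q^{n(2n-1)}}{(q)_{2n-1}(1-q^{2n})(1-dq^{2n-1})},
\end{align*}
which are exactly the last three expressions in the statement.

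For the first (left-hand) finite sum the limit gives
\begin{align*}
\sum_{n=1}^\infty \frac{(-1)^{n-1}(q^2/d;q^2)_{n-1} d^{n-1} q^{n^2}}{(q^2;q^2)_n (q;q^2)_n},
\end{align*}
and the only genuinely algebraic step is to recast this in the claimed form. Here I would use the factorization $(q;q)_{2n} = (q;q^2)_n (q^2;q^2)_n$ to merge the two denominators, and the single Pochhammer relation
\begin{align*}
(1/d;q^2)_n = (1 - 1/d)\,(q^2/d;q^2)_{n-1},
\end{align*}
which splits off the first factor of $(1/d;q^2)_n$. Since $1 - 1/d = -(1-d)/d$, this converts $(-1)^{n-1}(q^2/d;q^2)_{n-1} d^{n-1}$ into $(-1)^n (1/d;q^2)_n d^n/(1-d)$, producing
\begin{align*}
\sum_{n=1}^\infty \frac{(-1)^{n}(1/d;q^2)_n d^n q^{n^2}}{(1-d)(q;q)_{2n}},
\end{align*}
the left-hand side of the corollary.

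There is no serious obstacle here: the argument is a limit computation together with bookkeeping. The only point needing a word of care is the justification for passing the limit inside the sums, and the single non-obvious simplification is the relation $(1/d;q^2)_n = (1-1/d)(q^2/d;q^2)_{n-1}$ used to match the left-hand side; everything else is a direct substitution of the limits of the $N$-dependent factors established above.
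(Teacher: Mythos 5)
Your proposal is correct and is precisely the paper's argument: the paper obtains this corollary by letting $N\to\infty$ in Corollary \ref{generalization of corollary 9.5 untrodden}, which is exactly what you do, with the termwise limits of the $q$-binomial coefficients and the $N$-dependent Pochhammer factors, and the identity $(1/d;q^2)_n=(1-1/d)(q^2/d;q^2)_{n-1}$ together with $(q;q)_{2n}=(q;q^2)_n(q^2;q^2)_n$ to put the first sum in the stated form. The paper states the limit step without elaboration, so your write-up simply supplies the routine details it leaves implicit.
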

A further implication of Theorem \ref{finite analogue of generalization of garvan's identity} is given below.

\begin{corollary}\label{generalization of corollary 9.7 untrodden}
For $N\in\mathbb{N}$, we have
\begin{align*}
\frac{1}{(d-q)}\left(1-\frac{(dq^2;q^2)_N}{(q^3;q^2)_N} \right)=\sum_{n=1}^N\begin{bmatrix}
N\\n 
\end{bmatrix}_{q^2}\frac{(-1)^{n-1}(q/d;q^2)_n d^n q^{n(n+1)}}{(d-q)(q^3;q^2)_n}\nonumber\\=\sum_{n=1}^N\begin{bmatrix}
N\\n 
\end{bmatrix}_{q^2}\frac{(dq;q^2)_{n-1} (q^3;q^2)_{N-n} q^{3n-1}}{(q^3;q^2)_N}=\sum_{n=1}^N\frac{(dq^2;q^2)_{n-1}}{(q^3;q^2)_n}q^{2n}\nonumber \\=(1-q)\sum_{n=1}^N\begin{bmatrix}
N\\n 
\end{bmatrix}_{q^2}\frac{(q^2;q^2)_{n-1}(dq)_{2n-1}(1-dq^{4n})q^{2n^2+n-1}}{(dq^{2N+2};q^2)_n (q)_{2n-1} (1-dq^{2n-1}) (1-q^{2n+1})}.
\end{align*}
\end{corollary}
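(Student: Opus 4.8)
The plan is to specialize $z=q$ in Theorem~\ref{finite analogue of generalization of garvan's identity}, Lemma~\ref{generalization of lemma 9.3 Untrodden}, and Lemma~\ref{generalization of lemma 9.4 Untrodden}, exactly in the spirit of the proof of Corollary~\ref{generalization of corollary 9.5 untrodden}, and then to upgrade the resulting chain with one extra evaluation supplied by the Corteel--Lovejoy overpartition identity~\eqref{overpartition identity}. The only algebraic tool needed throughout is the even/odd splitting of the $q$-Pochhammer symbols, namely $(q^2;q)_{2n}=(q^2;q^2)_n\,(q^3;q^2)_n$ together with $(q^2;q)_{2n-1}=(q)_{2n}/(1-q)$ and $(q^2;q)_{2n}=(q)_{2n+1}/(1-q)$, which convert the base-$q$ symbols produced by the substitution into the base-$q^2$ symbols appearing in the statement.

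First I would put $z=q$ in Theorem~\ref{finite analogue of generalization of garvan's identity}. On the left the numerator factor $z^nd^nq^{n^2}$ collapses to $d^nq^{n(n+1)}$ and $(zq)_{2n}$ becomes $(q^2;q)_{2n}$; using $(q^2;q)_{2n}=(q^2;q^2)_n(q^3;q^2)_n$ to cancel the $(q^2;q^2)_n$ in the numerator, together with $z-d=-(d-q)$ and $(-1)^n=-(-1)^{n-1}$, the left side is precisely the second expression of the corollary. Setting $z=q$ in the two forms of $S_1$ in Lemma~\ref{generalization of lemma 9.3 Untrodden} reproduces the fourth expression (from the first form, once $(zq;q^2)_{N-n}=(q^2;q^2)_{N-n}$ cancels against the $q$-binomial) and the third expression (from the second form), so these are equal to $S_1(q,d,q,N)$. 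Setting $z=q$ in Lemma~\ref{generalization of lemma 9.4 Untrodden} gives a two-term summand; after extracting the factor $1-q$ via the identities for $(q^2;q)_{2n-1}$ and $(q^2;q)_{2n}$ above, and combining the two terms over a common denominator, one meets the collapse $1+\frac{(1-dq^{2n-1})q^{2n+1}}{1-q^{2n+1}}=\frac{1-dq^{4n}}{1-q^{2n+1}}$, and after inserting $(q^2;q^2)_n=(q^2;q^2)_{n-1}(1-q^{2n})$ this is exactly the fifth expression. Since $S_1(q,d,q,N)$ equals both Lemma~\ref{generalization of lemma 9.3 Untrodden} forms and the Lemma~\ref{generalization of lemma 9.4 Untrodden} form, and Theorem~\ref{finite analogue of generalization of garvan's identity} equates its two sides, the second through fifth expressions are all equal.

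It remains to identify these with the closed form $\frac{1}{d-q}\bigl(1-\frac{(dq^2;q^2)_N}{(q^3;q^2)_N}\bigr)$, which is the genuinely new step relative to Corollary~\ref{generalization of corollary 9.5 untrodden}. The key observation is that at $z=q$ the left-hand sum of the theorem already has the shape of the Corteel--Lovejoy sum~\eqref{overpartition identity} in base $q^2$, precisely because the exponent is now $q^{n(n+1)}$. Applying \eqref{overpartition identity} with $q$ replaced by $q^2$, $a=-d/q$, and $c=q$ (so that $ac=-d$ and $-acq^2=dq^2$) gives $\sum_{n=0}^N\begin{bmatrix}N\\n\end{bmatrix}_{q^2}\frac{(q/d;q^2)_n(-d)^nq^{n(n+1)}}{(q^3;q^2)_n}=\frac{(dq^2;q^2)_N}{(q^3;q^2)_N}$, and since $(-d)^n=(-1)^nd^n$, stripping off the $n=0$ term (which equals $1$) and multiplying by $-1/(d-q)$ turns the remaining $n\ge 1$ sum into the second expression and the right side into the first. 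The main obstacle is thus not conceptual but organizational: one must track the even/odd Pochhammer splittings carefully and verify the telescoping combination producing the $(1-dq^{4n})$ factor, while spotting the correct specialization $a=-d/q,\ c=q$, equivalently recognizing that $z=q$ is the unique value making the numerator power $q^{n(n+1)}$ match the base-$q^2$ overpartition identity.
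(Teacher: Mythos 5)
Your proposal is correct and follows essentially the same route as the paper: substitute $z=q$ into Theorem \ref{finite analogue of generalization of garvan's identity}, Lemma \ref{generalization of lemma 9.3 Untrodden} and Lemma \ref{generalization of lemma 9.4 Untrodden}, chain the four resulting expressions together, and separately evaluate the alternating sum in closed form. The only difference is where that closed form comes from: the paper quotes Dixit--Patel's Lemma 4.2 with $q\mapsto q^2$, $c=q$, whereas you specialize the Corteel--Lovejoy identity \eqref{overpartition identity} with $q\mapsto q^2$, $a=-d/q$, $c=q$; these are the same identity under the substitution $a=-d/c$, so your version is a valid and slightly more self-contained replacement.
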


\begin{proof}
In \cite[Lemma 4.2]{DixitPatel}, replace $q$ by $q^2$  and then put $c=q$. We get
\begin{align*}
\sum_{n=1}^N \begin{bmatrix}
N\\
n
\end{bmatrix}_{q^2} \frac{(-1)^{n-1} (q/d;q^2)_n d^n q^{n(n+1)}}{(q^3;q^2)_n} = 1- \frac{(dq^2;q^2)_N}{(q^3;q^2)_N}.
\end{align*}
Now, substitute $z=q$ in Theorem \ref{finite analogue of generalization of garvan's identity}, Lemma \ref{generalization of lemma 9.3 Untrodden}, and \ref{generalization of lemma 9.4 Untrodden}. Combining all the resulting identities and use the above identity to get the desired result.
\end{proof}

Note that, by substituting $d=1$ in Corollary \ref{generalization of corollary 9.7 untrodden} gives \cite[Corollary 9.7]{DEMS}.

\section{Identities involving finite sum of ${}_3\phi_2$ and ${}_2\phi_1$}
In this section, our objective is to establish an identity that involves a finite sum of the basic hypergeometric series ${}_3\phi_2$ and ${}_2\phi_1$. This identity is significant as it effectively serves as a generalization of an identity \eqref{finite sum of 2phi1} of Dixit and Patel. Before going into the proof of this identity, we will begin by introducing and proving a few important lemmas. These lemmas will be essential to prove our theorem.
\begin{lemma}\label{generalization of lemma 4.1 Dixit-Patel}
We have
\begin{align}\label{identitiy involving finite sum of 2phi1}
&\sum_{n=1}^N \begin{bmatrix}
N\\
n
\end{bmatrix} \frac{(-1)^{n-1}(c/d)_n (q/e)_{n-1} e^{n-1} d^n q^{n(n+1)/2}}{(cq)_n (q)_{n-1}}\left(\sum_{k=1}^n\frac{q^k}{1-q^k}\right) \nonumber \\ 
& = \frac{(c/d)_\i (deq)_\i (q/e)_\i}{(cq)_\i (deq^{N+1})_\i (q)_\i} \sum_{k=1}^N \begin{bmatrix}
N\\
k
\end{bmatrix} \frac{e^{k-1}d^k q^{k(k+1)}}{(deq)_k(1-q^k)} \sum_{m=0}^\i \frac{(dq)_m (deq^{N+1})_m}{(deq^{k+1})_m (q)_m} \left( \frac{cq^k}{d} \right)^m \nonumber \\ 
& \hspace{5cm} \times {}_{2}\phi_{1}\left( \begin{matrix} e, & deq^{N+m+1}  \\ & deq^{k+m+1} \end{matrix}  ;~ q,~ \frac{q^k}{e} \right).
\end{align}
\end{lemma}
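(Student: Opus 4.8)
The plan is to manufacture the weight $\sum_{k=1}^n \frac{q^k}{1-q^k}$ by differentiation, exactly as in the proofs of Theorems \ref{finite analogue of one variable generalization of entry 2} and \ref{finite analogue of one variable generalization of entry 3}, and then to recast the differentiated right-hand side as the nested basic hypergeometric series by a Heine transformation. The starting point is the finite analogue of the four-variable Andrews--Garvan--Liang identity, Corollary \ref{finite analogue of 4-variable generalization of Andrews Garvan Liang}, regarded as an identity in the free variable $z$. Its left summand carries the factor $z^n/(zq)_n$, and since $\frac{d}{dz}\frac{1}{(zq)_n}=\frac{1}{(zq)_n}\sum_{k=1}^n \frac{q^k}{1-zq^k}$, which equals $\frac{1}{(q)_n}\sum_{k=1}^n \frac{q^k}{1-q^k}$ at $z=1$, differentiating in $z$ and then setting $z=1$ is precisely what produces the sum $\sum_{k=1}^n\frac{q^k}{1-q^k}$ multiplying the left side of \eqref{identitiy involving finite sum of 2phi1}.

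First I would differentiate both sides of Corollary \ref{finite analogue of 4-variable generalization of Andrews Garvan Liang} with respect to $z$ and evaluate at $z=1$. On the left the product rule splits each summand into two parts: one carrying an explicit factor $n$ (from differentiating $z^n$) and one carrying $\sum_{k=1}^n\frac{q^k}{1-q^k}$ (from differentiating $1/(zq)_n$). After reconciling the signs $(-1)^n$ versus $(-1)^{n-1}$, the second part is exactly the left side of the lemma, while the explicit-$n$ part is the $e$-analogue of the first sum appearing in Dixit and Patel's identity \eqref{finite sum of 2phi1} and is carried along as a known quantity. On the right, $z$ occurs in the prefactor, in $(zdq/c)_{n-1}$ and in $(zq)_n$, so differentiating and setting $z=1$ produces a finite sum in $n$ whose factors I would consolidate using the Pochhammer symbols $(q/e)_{n-1}$ and $(ceq)_{N-n}$ already present.

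The decisive step is to bring this finite $n$-sum into the stated double-sum form. Here I would read the $n$-sum as a terminating ${}_2\phi_1$ and apply Heine's transformation \eqref{heine transformation}, together with the finite Heine transformation \eqref{finite Heine transformation} or its corollary \eqref{corollary of finite heine transformation} to handle the truncation, trading the $n$-sum for the infinite-product factor $\frac{(c/d)_\i (deq)_\i (q/e)_\i}{(cq)_\i (deq^{N+1})_\i (q)_\i}$ times the sum over $k$ and $m$, with the inner ${}_2\phi_1$ picking up the argument $q^k/e$ and the numerator parameters $e$ and $deq^{N+m+1}$.

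I expect the main obstacle to be the combination of two delicate computations: the product-rule bookkeeping in differentiating the several $z$-dependent factors on the right, and the matching of parameters in the Heine step so that the spurious explicit-$n$ contribution is absorbed and the argument $q^k/e$ emerges verbatim. A convenient consistency check at every stage is the specialization $e=1$: then the inner ${}_2\phi_1$ has a numerator parameter equal to $1$, so only its zeroth term survives and it collapses to $1$; the double sum degenerates to a single ${}_2\phi_1$, and the identity reduces to the ${}_2\phi_1$ term of \eqref{finite sum of 2phi1}, confirming that \eqref{identitiy involving finite sum of 2phi1} is the correct one-variable ($e$) generalization of Dixit and Patel's lemma.
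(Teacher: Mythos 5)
Your proposal has a genuine gap: it is essentially circular. Differentiating Corollary \ref{finite analogue of 4-variable generalization of Andrews Garvan Liang} with respect to $z$ at $z=1$ produces the \emph{combined} weight $n+\sum_{k=1}^n q^k/(1-q^k)$ on the left-hand side — this is exactly identity \eqref{define T_1+T_2}, which the paper uses to prove Theorem \ref{generalization of Theorem 4.3 Dixit-Patel}, \emph{taking the present lemma as an independent input}. To extract from that identity the piece weighted by $\sum_{k=1}^n q^k/(1-q^k)$ alone, you would need an independent closed form for the piece weighted by $n$; but no such evaluation exists — that $n$-weighted sum appears unevaluated on the left of the final theorem (and of Dixit--Patel's \eqref{finite sum of 2phi1}). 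Concretely: writing $L_n+L_\Sigma=T_1+T_2$ for the differentiated identity, the theorem asserts $L_n+R=\frac{c}{c-d}T_1+T_3$ where $R$ is the nested double sum, and since $T_2=\frac{d}{c-d}T_1+T_3$, the theorem is \emph{equivalent} to the lemma $L_\Sigma=R$ modulo the differentiated identity. So your route cannot establish the lemma without already knowing it (or the theorem).

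The paper's actual proof works directly on the left-hand side and uses a different set of tools, none of which appear in your plan. The crux is van Hamme's identity $\sum_{k=1}^n\frac{q^k}{1-q^k}=\sum_{k=1}^n\left[\begin{smallmatrix}n\\k\end{smallmatrix}\right]\frac{(-1)^{k-1}q^{k(k+1)/2}}{1-q^k}$, which converts the harmonic-type weight into a $q$-binomial sum so that the order of summation over $n$ and $k$ can be interchanged and the $n$-sum re-indexed by $n\mapsto n+k$. The infinite $m$-sum in the target then arises not from any Heine step on the differentiated right-hand side, but from expanding the ratio $(cq^{k+n+1})_\infty/(cq^{k+n}/d)_\infty$ by the $q$-binomial theorem \eqref{q-Binomial theorem}; only after that does Heine's transformation \eqref{heine transformation} (applied to the terminating inner ${}_2\phi_1$ with parameters $q^{-(N-k)}$, $q^k/e$, $q^k$ and argument $deq^{N+m+1}$) produce the inner ${}_2\phi_1$ with argument $q^k/e$. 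Your $e=1$ consistency check is correct as far as it goes, but it does not repair the missing ideas: van Hamme's identity, the summation interchange, and the $q$-binomial expansion that generates the $m$-sum.
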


\begin{proof}
%[Lemma \ref{generalization of lemma 4.1 Dixit-Patel}[]
Using van Hamme's identity \cite{hamme}
\begin{align*}
\sum_{k=1}^n\frac{q^k}{1-q^k}=\sum_{k=1}^n \begin{bmatrix}
n\\
k
\end{bmatrix} \frac{(-1)^{k-1} q^{k(k+1)/2} }{1-q^k},
\end{align*}
the left hand side expression of \eqref{identitiy involving finite sum of 2phi1} becomes
\begin{align*}
&\sum_{n=1}^N \begin{bmatrix}
N\\
n
\end{bmatrix} \frac{(-1)^{n-1}(c/d)_n (q/e)_{n-1} e^{n-1} d^n q^{n(n+1)/2}}{(cq)_n (q)_{n-1}}\sum_{k=1}^n \begin{bmatrix}
n\\
k
\end{bmatrix} \frac{(-1)^{k-1} q^{k(k+1)/2} }{1-q^k} \\
&=\sum_{k=1}^N \sum_{n=k}^N \begin{bmatrix}
N\\
n
\end{bmatrix} \begin{bmatrix}
n\\
k
\end{bmatrix} \frac{(-1)^{n-1}(c/d)_n (q/e)_{n-1} e^{n-1} d^n q^{n(n+1)/2}}{(cq)_n (q)_{n-1}}  \frac{(-1)^{k-1} q^{k(k+1)/2} }{1-q^k}.
\end{align*}
Now re-index the second sum of the above expression by replacing $n$ by $n+k$ yields
\begin{align*}
&(q)_N\sum_{k=1}^N \frac{(-1)^{k-1} q^{k(k+1)/2} }{(q)_k(1-q^k)} \sum_{n=0}^{N-k} \frac{(-1)^{n+k-1}(c/d)_{n+k} (q/e)_{n+k-1} e^{n+k-1} d^{n+k} q^{(n+k)(n+k+1)/2}}{(cq)_{n+k} (q)_n (q)_{N-n-k} (q)_{n+k-1}} \\
&= (q)_N\sum_{k=1}^N \frac{(c/d)_k (q/e)_{k-1} q^{k(k+1)} e^{k-1} d^k }{(cq)_k(q)_k (q)_{k-1} (1-q^k)} \sum_{n=0}^{N-k} \frac{(-1)^{n}(cq^k/d)_{n} (q^k/e)_{n} (de)^{n} q^{\frac{n(n+1)}{2}+nk}}{(cq^{k+1})_{n} (q)_n (q)_{N-n-k} (q^k)_{n}}.
\end{align*}
Taking $x=1$ and replacing $N$ by $N-k$ in \eqref{basic formula 1} and making use of it in the second sum of the above expression to obtain
\begin{align*}
&(q)_N\sum_{k=1}^N \frac{(c/d)_k (q/e)_{k-1} q^{k(k+1)} e^{k-1} d^k }{(cq)_k(q)_k (q)_{k-1} (1-q^k)} \sum_{n=0}^{N-k} \frac{ \left(q^{-(N-k)} \right)_n (cq^k/d)_{n} (q^k/e)_{n}  }{(cq^{k+1})_{n} (q)_n (q)_{N-k} (q^k)_{n}}\left(deq^{(N+1)}\right)^n \\
&=\sum_{k=1}^N \begin{bmatrix}
N\\
k
\end{bmatrix} \frac{(c/d)_k (q/e)_{k-1} q^{k(k+1)} e^{k-1} d^k }{(cq)_k (q)_{k-1} (1-q^k)} \frac{(cq^k/d)_\i}{(cq^{k+1})_\i} \sum_{n=0}^{N-k} \frac{ \left(q^{-(N-k)} \right)_n (q^k/e)_{n}  }{(q)_n (q^k)_{n}}\left(deq^{(N+1)}\right)^n \frac{(cq^{k+n+1})_\i}{(cq^{k+n}/d)_\i}.
\end{align*}
Now make use of $q$-binomial theorem \eqref{q-Binomial theorem} and get
\begin{align*}
&\frac{(c/d)_\i}{(cq)_\i}\sum_{k=1}^N \begin{bmatrix}
N\\
k
\end{bmatrix} \frac{(q/e)_{k-1} q^{k(k+1)} e^{k-1} d^k }{(q)_{k-1} (1-q^k)}  \sum_{n=0}^{N-k} \frac{ \left(q^{-(N-k)} \right)_n (q^k/e)_{n}  }{(q)_n (q^k)_{n}}\left(deq^{(N+1)}\right)^n\sum_{m=0}^\i \frac{(dq)_m}{(q)_m}\left(\frac{cq^{k+n}}{d}\right)^m \\
&=\frac{(c/d)_\i}{(cq)_\i}\sum_{k=1}^N \begin{bmatrix}
N\\
k
\end{bmatrix} \frac{(q/e)_{k-1} q^{k(k+1)} e^{k-1} d^k }{(q)_{k-1} (1-q^k)} \sum_{m=0}^\i \frac{(dq)_m}{(q)_m}\left(\frac{cq^{k}}{d}\right)^m {}_{2}\phi_{1}\left( \begin{matrix} q^{-(N-k)}, & q^k/e \\ &  q^k  \end{matrix}  ;~ q,~ deq^{N+m+1} \right).
\end{align*}
Now apply Heine's transformation \eqref{heine transformation} with $a=q^{-(N-k)},~b=q^k/e,~c=q^k$ and $z=deq^{N+m+1}$ and get required right hand side of \eqref{identitiy involving finite sum of 2phi1} upon simplification.

\end{proof}

\begin{lemma}\label{generalization to lemma 4.2 Dixit-Patel}
We have
\begin{align*}
\sum_{n=1}^N \begin{bmatrix}
N\\
n
\end{bmatrix} \frac{(-1)^{n-1}(c/d)_n (q/e)_{n-1} e^{n-1} d^n q^{n(n+1)/2}}{(cq)_n (q)_{n-1}} = \frac{(deq)_N}{(cq)_N} \sum_{n=1}^N \frac{(q^{-N})_n (d/c)_n (eq)_{n-1}}{(deq)_n (q)_n (q)_{n-1}} \left(cq^{N+1}\right)^n.
\end{align*}
\end{lemma}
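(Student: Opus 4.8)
The plan is to recognise the left-hand side as a single terminating ${}_{3}\phi_{2}$ and to carry it to the right-hand side through the transformation \eqref{3_phi_2}. First I would convert the Gaussian $q$-binomial coefficient using \eqref{basic formula 1} with $x=1$, which gives $\begin{bmatrix} N\\n \end{bmatrix} = (-1)^n q^{Nn-n(n-1)/2}\,(q^{-N})_n/(q)_n$. Inserting this into the left-hand sum, the factor $(-1)^{n-1}$ combines with the $(-1)^n$ from the coefficient into an overall minus sign, while the two powers $q^{Nn-n(n-1)/2}$ and $q^{n(n+1)/2}$ fuse into $q^{n(N+1)}$, so that $d^n q^{n(N+1)}=(dq^{N+1})^n$. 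Hence the left-hand side becomes
\begin{align*}
-\sum_{n=1}^N \frac{(q^{-N})_n (c/d)_n (q/e)_{n-1} e^{n-1}}{(q)_n (cq)_n (q)_{n-1}}\,(dq^{N+1})^n.
\end{align*}

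Next I would clear the shifted symbols $(q/e)_{n-1}$ and $(q)_{n-1}$ by re-indexing $n\mapsto m+1$ and peeling off the leading factors via $(\alpha)_{m+1}=(1-\alpha)(\alpha q)_m$. Pulling the $m$-independent constant $-\frac{(1-q^{-N})(1-c/d)\,dq^{N+1}}{(1-q)(1-cq)}$ out front leaves precisely
\begin{align*}
{}_{3}\phi_{2}\left[\begin{matrix} q^{1-N}, & cq/d, & q/e \\ & q^{2}, & cq^{2} \end{matrix}\,; q, deq^{N+1}\right],
\end{align*}
a series terminating at $m=N-1$. The decisive observation is that for $A=q^{1-N}$, $B=cq/d$, $C=q/e$, $D=q^{2}$, $E=cq^{2}$ one computes $DE/(ABC)=deq^{N+1}$, so the base of this ${}_{3}\phi_{2}$ is exactly the argument demanded by \eqref{3_phi_2} (which remains valid for the terminating parameter $A=q^{1-N}$, both sides then being finite sums).

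I would then apply \eqref{3_phi_2} with these parameters. The transformed entries are $D/B=dq/c$, $D/C=eq$, $DE/BC=deq^{2}$, and the new base is $E/A=cq^{N+1}$, while the infinite-product prefactor telescopes to $(deq^{2})_{N-1}/(cq^{2})_{N-1}$, since $(cq^{2})_\infty=(cq^{2})_{N-1}(cq^{N+1})_\infty$ and likewise for the $deq^{2}$ products. Finally I would undo the re-indexing, folding the constant, the telescoped prefactor, and the transformed ${}_{3}\phi_{2}$ back into a sum over $n$ carrying a $q$-binomial coefficient, and using $(deq)_N=(1-deq)(deq^{2})_{N-1}$ together with $(cq)_N=(1-cq)(cq^{2})_{N-1}$ to produce the overall factor $(deq)_N/(cq)_N$. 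The remaining scalar prefactors then agree because $-(1-c/d)\,d=(1-d/c)\,c=c-d$, and the right-hand side emerges exactly.

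I expect the only obstacle to be bookkeeping rather than anything conceptual: keeping the signs straight and verifying that the three distinct powers of $q$ (from the Gaussian term, from the converted binomial, and from the $(\alpha)_{m+1}$ peeling) coalesce into the clean bases $deq^{N+1}$ and $cq^{N+1}$, and checking that the shifted Pochhammer symbols $(q/e)_{n-1}$ and $(eq)_{n-1}$ land in the correct numerator and denominator slots on each side. Once the five parameters are matched against \eqref{3_phi_2}, the identity follows with no further input.
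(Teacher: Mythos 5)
Your proof is correct, but it follows a genuinely different route from the paper's. You convert the left-hand side directly into the terminating series
\begin{align*}
-\frac{(1-q^{-N})(1-c/d)\,dq^{N+1}}{(1-q)(1-cq)}\;
{}_{3}\phi_{2}\left[\begin{matrix} q^{1-N}, & cq/d, & q/e \\ & q^{2}, & cq^{2} \end{matrix}\,; q,\, deq^{N+1}\right],
\end{align*}
observe that the base equals $DE/(ABC)$, and apply the transformation \eqref{3_phi_2}; the parameter checks all go through ($E/A=cq^{N+1}$, $D/B=dq/c$, $D/C=eq$, $DE/(BC)=deq^{2}$, the infinite products telescope to $(deq^{2})_{N-1}/(cq^{2})_{N-1}$, and the scalar match $-(1-c/d)d=(1-d/c)c$ reconciles the prefactors with $(deq)_N/(cq)_N$ after re-indexing the right-hand side). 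The paper instead first invokes Corollary \ref{finite analogue of 4-variable generalization of Andrews Garvan Liang} with $z=1$ to trade the left-hand side for a different finite sum, rewrites that sum via \eqref{basic formula 2} with $x=ce$ as a ${}_{3}\phi_{2}$ with argument $q$, and then applies the terminating transformation \eqref{3_phi_2 finite}. Your argument is more self-contained, needing only one transformation applied directly to the stated sum, whereas the paper's reuses machinery it has already established; your one point of care, correctly flagged, is that \eqref{3_phi_2} must be read as a rational-function identity when $A=q^{1-N}$ makes both sides terminate, which is standard.
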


\begin{proof}
Putting $z=1$ in Corollary \ref{finite analogue of 4-variable generalization of Andrews Garvan Liang}, we have
\begin{align*}
\sum_{n=1}^N \begin{bmatrix}
N\\
n
\end{bmatrix} \frac{(-1)^{n}(c/d)_n (q/e)_{n-1} q^{n(n+1)/2}  d^n e^{n-1}}{(cq)_n(q)_{n-1}}
&= \frac{1}{(cq)_N} \sum_{n=1}^N \begin{bmatrix}
N\\
n
\end{bmatrix} \frac{(ceq)_{N-n} (d/c)_{n}(q/e)_{n-1}(cq)^ne^{n-1}}{(q)_{n-1}} \\
&=\frac{(ceq)_N}{e(cq)_N} \sum_{n=1}^N \frac{(q)_N (ceq)_{N-n} (d/c)_{n}(q/e)_{n-1}(ceq)^n}{(q)_{N-n} (ceq)_N (q)_n (q)_{n-1}}.
\end{align*}
Utilizing \eqref{basic formula 2} with $x=ce$ in right hand side of the above expression, we get
\begin{align*}
\sum_{n=1}^N \begin{bmatrix}
N\\
n
\end{bmatrix} &\frac{(-1)^{n}(c/d)_n (q/e)_{n-1} q^{n(n+1)/2}  d^n e^{n-1}}{(cq)_n(q)_{n-1}}= \frac{(ceq)_N}{e(cq)_N} \sum_{n=1}^N \frac{\left(q^{-N}\right)_n (d/c)_{n}(q/e)_{n-1}(q)^n}{ \left( q^{-N}/ce \right)_n (q)_n (q)_{n-1}} \\
&=\frac{q}{e} \frac{(ceq)_N}{(cq)_N} \frac{\left(1-q^{-N}\right) (1-d/c)}{\left( 1-q^{-N}/ce\right) (1-q)} {}_{3}\phi_{2}\left[ \begin{matrix} q^{-(N-1)},& q/e, & dq/c \\ q^2 & q^{-(N-1)}/ce & \end{matrix}  ;~ q,~ q \right].
\end{align*}
Applying ${}_{3}\phi_{2}$ transformation with $N$ replaced by $N-1$ and substituting $B=q/e,~C=dq/c,~D=q^2$ and $E=q^{-(N-1)}/ce$ in \eqref{3_phi_2 finite}, we get
\begin{align*}
&\sum_{n=1}^N \begin{bmatrix}
N\\
n
\end{bmatrix} \frac{(-1)^{n}(c/d)_n (q/e)_{n-1} q^{n(n+1)/2}  d^n e^{n-1}}{(cq)_n(q)_{n-1}} \\
&=\frac{q}{e} \frac{(ceq)_N}{(cq)_N} \frac{\left(1-q^{-N}\right) (1-d/c)}{\left( 1-q^{-N}/ce\right) (1-q)} \frac{\left( q^{-N}/de \right)_{N-1}}{\left( q^{-(N-1)}/ce \right)_{N-1}} \left(\frac{dq}{c} \right)^{N-1} {}_{3}\phi_{2}\left[ \begin{matrix} q^{-(N-1)}, & dq/c, & eq \\ q^2 & deq^2 & \end{matrix}  ;~ q,~ cq^{N+1} \right]\\
&=\frac{q}{e} \frac{(ceq)_N}{(cq)_N} \frac{\left(1-q^{-N}\right) (1-d/c)}{\left( 1-q^{-N}/ce\right) (1-q)} \frac{(deq^2)_{N-1}}{(ceq)_{N-1}} \sum_{n=0}^{N-1} \frac{\left(q^{-(N-1)} \right)_n (dq/c)_n (eq)_n}{ (q^2)_n (deq^2)_n (q)_n}\left(cq^{N+1}\right)^n \\
&=\frac{(deq)_N}{(cq)_N}\sum_{n=0}^{N-1} \frac{\left(q^{-N} \right)_{n+1} (d/c)_{n+1} (eq)_n}{ (q)_{n+1} (deq)_{n+1} (q)_n}\left(cq^{N+1}\right)^{n+1},
\end{align*}
this completes the proof of Lemma \ref{generalization to lemma 4.2 Dixit-Patel} by re-indexing the above sum by replacing $n$ by $n-1$.
\end{proof}

We now ready to prove our main identity of this section, which is a one variable generalization of an identity \eqref{finite sum of 2phi1} of Dixit and Patel.

\begin{theorem}\label{generalization of Theorem 4.3 Dixit-Patel}
We have
\begin{align}\label{generalization of Theorem 4.3 Dixit-Patel equation}
&\sum_{n=1}^N \begin{bmatrix}
N\\
n
\end{bmatrix} \frac{n(-1)^{n-1}(c/d)_n (q/e)_{n-1} e^{n-1} d^n q^{n(n+1)/2}}{(cq)_n (q)_{n-1}} + \frac{(c/d)_\i (deq)_\i (q/e)_\i}{(cq)_\i (deq^{N+1})_\i (q)_\i} \nonumber \\
& \times \sum_{k=1}^N \begin{bmatrix}
N\\
k
\end{bmatrix} \frac{e^{k-1}d^k q^{k(k+1)}}{(deq)_k(1-q^k)} \sum_{m=0}^\i \frac{(dq)_m (deq^{N+1})_m}{(deq^{k+1})_m (q)_m} \left( \frac{cq^k}{d} \right)^m {}_{2}\phi_{1}\left( \begin{matrix} e, & deq^{N+m+1}\\ & deq^{k+m+1} \end{matrix}  ;~ q,~ \frac{q^k}{e} \right) \nonumber \\
&=\frac{c}{c-d} \frac{(deq)_N}{(cq)_N} \sum_{n=1}^N \frac{(q^{-N})_n (d/c)_n (eq)_{n-1}}{(deq)_n (q)_n (q)_{n-1}} \left(cq^{N+1}\right)^n + \frac{(q/e)_{N-1}}{(cq)_N (q)_{N-1}} \nonumber \\
& \times \sum_{k=1}^N \begin{bmatrix}
N\\
k
\end{bmatrix} \frac{(cq/d)_k (deq)_{N-k} (dq)^ke^{k-1}}{ (1-q^k)} {}_{3}\phi_{2}\left( \begin{matrix} q^{-(N-k)}, & e, & ceq  \\ deq & eq^{1-N} & \end{matrix}  ;~ q,~ q \right).
\end{align}
\end{theorem}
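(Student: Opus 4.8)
The plan is to derive \eqref{generalization of Theorem 4.3 Dixit-Patel equation} by differentiating Corollary~\ref{finite analogue of 4-variable generalization of Andrews Garvan Liang} with respect to $z$ and specializing at $z=1$, using Lemmas~\ref{generalization of lemma 4.1 Dixit-Patel} and~\ref{generalization to lemma 4.2 Dixit-Patel} to identify the resulting pieces. First I would simplify the left-hand side. The second sum on the left of \eqref{generalization of Theorem 4.3 Dixit-Patel equation} is exactly the right-hand side of Lemma~\ref{generalization of lemma 4.1 Dixit-Patel}, so by that lemma it equals the \emph{base sum}
\begin{align*}
B:=\sum_{n=1}^N \begin{bmatrix} N\\ n \end{bmatrix}\frac{(-1)^{n-1}(c/d)_n(q/e)_{n-1}e^{n-1}d^nq^{n(n+1)/2}}{(cq)_n(q)_{n-1}}
\end{align*}
weighted by $\sum_{k=1}^n \frac{q^k}{1-q^k}$. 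Adding the first sum, which carries the extra factor $n$, and using the elementary identity $n+\sum_{k=1}^n\frac{q^k}{1-q^k}=\sum_{k=1}^n\frac{1}{1-q^k}$, the whole left-hand side collapses to $B$ weighted by $\sum_{k=1}^n\frac{1}{1-q^k}$.

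Next I would recognize this weighted sum as a derivative. Since $\frac{d}{dz}\frac{z^n}{(zq)_n}\big|_{z=1}=\frac{1}{(q)_n}\sum_{k=1}^n\frac{1}{1-q^k}$, the $\sum_{k=1}^n\frac{1}{1-q^k}$-weighted version of $B$ is, up to the overall sign coming from $(-1)^n$ versus $(-1)^{n-1}$, precisely $\frac{d}{dz}$ of the left-hand side of Corollary~\ref{finite analogue of 4-variable generalization of Andrews Garvan Liang} evaluated at $z=1$. By the corollary it therefore equals the same $z$-derivative of the right-hand side at $z=1$, reducing the problem to differentiating
\begin{align*}
\frac{z(c-d)}{c(cq)_N}\sum_{n=1}^N\begin{bmatrix}N\\n\end{bmatrix}\frac{(q)_n(ceq)_{N-n}(zdq/c)_{n-1}(q/e)_{n-1}(cq)^ne^{n-1}}{(zq)_n(q)_{n-1}}
\end{align*}
and evaluating at $z=1$.

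The last and hardest step is to compute this derivative at $z=1$ and recast it in the stated form. By the product rule it splits into the derivative of the explicit prefactor $z$ and the logarithmic derivatives of $(zdq/c)_{n-1}$ and of $(zq)_n$, the latter two contributing the weights $-\sum_i\frac{dq^i/c}{1-dq^i/c}$ and $\sum_j\frac{q^j}{1-q^j}$ respectively. I expect the base-sum multiple $\frac{c}{c-d}\frac{(deq)_N}{(cq)_N}\sum_n\cdots$ appearing first on the right of \eqref{generalization of Theorem 4.3 Dixit-Patel equation} to assemble from the explicit-$z$ term together with the base-sum portions of these weighted contributions, recognized through Corollary~\ref{finite analogue of 4-variable generalization of Andrews Garvan Liang} at $z=1$ and Lemma~\ref{generalization to lemma 4.2 Dixit-Patel}. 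The residual weighted double sum coming from the Pochhammer derivatives must then be folded into the single ${}_{3}\phi_{2}$ on the right, for which I would invoke the ${}_3\phi_2$ and finite Heine transformations \eqref{3_phi_2 finite}, \eqref{finite Heine transformation} and \eqref{corollary of finite heine transformation} together with the basic formulas \eqref{basic formula 1} and \eqref{basic formula 2}.

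The main obstacle is precisely this bookkeeping: separating cleanly the base-sum contributions from the residual weighted sum, tracking the overall sign and the $z=1$ specializations of the various Pochhammer symbols, and matching the residual sum to the exact ${}_{3}\phi_{2}$ stated in \eqref{generalization of Theorem 4.3 Dixit-Patel equation} rather than to some transformed variant of it. Should the direct differentiation prove unwieldy, an alternative is to carry out the differentiation at the level of Corollary~\ref{finite analogue of 4-variable generalization of Andrews Garvan Liang} first and apply the transformations afterward, as in the proofs of Lemmas~\ref{generalization of lemma 4.1 Dixit-Patel} and~\ref{generalization to lemma 4.2 Dixit-Patel}, so that the ${}_3\phi_2$ emerges directly.
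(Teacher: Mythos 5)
Your overall strategy coincides with the paper's: differentiate Corollary~\ref{finite analogue of 4-variable generalization of Andrews Garvan Liang} in $z$ at $z=1$, identify the harmonic-weighted piece of the left side with the ${}_2\phi_1$ double sum via Lemma~\ref{generalization of lemma 4.1 Dixit-Patel}, and use Lemma~\ref{generalization to lemma 4.2 Dixit-Patel} to put the unweighted piece of the right side into the stated $T_1$ form. Your observation that $n+\sum_{k=1}^n\frac{q^k}{1-q^k}=\sum_{k=1}^n\frac{1}{1-q^k}$ is the derivative weight of $z^n/(zq)_n$ at $z=1$ is correct and is exactly the mechanism in \eqref{define T_1+T_2}.

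There is, however, a genuine gap at the step you describe as ``bookkeeping.'' Differentiating the right-hand side of the corollary produces, besides the $T_1$ term, a sum weighted by the \emph{difference} $\sum_{k=1}^{n}\frac{q^k}{1-q^k}-\sum_{k=1}^{n-1}\frac{(d/c)q^k}{1-(d/c)q^k}$, and none of the tools you list --- \eqref{3_phi_2 finite}, \eqref{finite Heine transformation}, \eqref{corollary of finite heine transformation}, \eqref{basic formula 1}, \eqref{basic formula 2} --- can convert such a harmonic-type weight into hypergeometric form. The paper needs the Guo--Zhang identity (stated in the proof, from \cite{guozhang}), which with $m=0$, $x=d/c$ rewrites this difference as the constant $\frac{d}{c-d}$ plus $-\frac{1}{(d/c)_n}\sum_{k=1}^n\bigl[\begin{smallmatrix}n\\k\end{smallmatrix}\bigr]\frac{(cq/d)_k(d/c)_{n-k}}{1-q^k}(d/c)^k$. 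The constant accounts for the factor $\frac{c}{c-d}=1+\frac{d}{c-d}$ in front of $T_1$, and only the residual $q$-binomial sum, after interchanging the order of summation and then applying \eqref{basic formula 2} and the finite Heine transformation \eqref{finite Heine transformation}, becomes the ${}_3\phi_2$ term $T_3$. Without this identity (or an equivalent device for the difference of weights) your plan stalls precisely at the point you flag as the main obstacle, so you should supply it explicitly rather than expect the Heine-type transformations alone to absorb the weighted residue.
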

\begin{remark}
It can be easily observe that, $e=1$ in the above identity gives \eqref{finite sum of 2phi1}.
\end{remark}

\begin{proof}[Theorem \ref{generalization of Theorem 4.3 Dixit-Patel}][]
Differentiating both sides of Corollary \ref{finite analogue of 4-variable generalization of Andrews Garvan Liang} with respect to $z$ and then put $z=1$ to get
\begin{align}\label{define T_1+T_2}
\sum_{n=1}^N \begin{bmatrix}
N\\
n
\end{bmatrix} \frac{(-1)^{n-1}(c/d)_n (q/e)_{n-1} e^{n-1} d^n q^{n(n+1)/2}}{(cq)_n (q)_{n-1}}\left(n+\sum_{k=1}^n\frac{q^k}{1-q^k}\right) =:T_1+T_2,
\end{align}
where
\begin{align}
T_1&=\frac{(d-c)}{c(cq)_N} \sum_{n=1}^N \begin{bmatrix}
N\\
n
\end{bmatrix} \frac{(ceq)_{N-n} (dq/c)_{n-1}(q/e)_{n-1}(cq)^ne^{n-1}}{(q)_{n-1}}, \label{define T_1} \\
T_2&=\frac{(d-c)}{c(cq)_N} \sum_{n=1}^N \begin{bmatrix}
N\\
n
\end{bmatrix} \frac{(ceq)_{N-n} (dq/c)_{n-1}(q/e)_{n-1}(cq)^ne^{n-1}}{(q)_{n-1}}\left(\sum_{k=1}^n\frac{q^k}{1-q^k} - \sum_{k=1}^{n-1}\frac{q^kd/c}{1-q^kd/c} \right). \label{define T_2}
\end{align}
Using Corollary \ref{finite analogue of 4-variable generalization of Andrews Garvan Liang} with $z=1$ and Lemma \ref{generalization to lemma 4.2 Dixit-Patel} in \eqref{define T_1}, we get
\begin{align}\label{final value of T_1}
T_1=\frac{(deq)_N}{(cq)_N} \sum_{n=1}^N \frac{(q^{-N})_n (d/c)_n (eq)_{n-1}}{(deq)_n (q)_n (q)_{n-1}} \left(cq^{N+1}\right)^n.
\end{align}
Guo and Zhang \cite[Corollary~3.1]{guozhang} proved the following identity for $n\geq 0$ and $0\leq m \leq n$:
\begin{align*}
\sum_{\substack{k=0\\ k\neq m}}^n \begin{bmatrix}
n\\
k
\end{bmatrix} \frac{(q/x)_k (x)_{n-k}}{1-q^{k-m}}x^k = (-1)^mq^{\frac{m(m+1)}{2}} \begin{bmatrix}
n\\
m
\end{bmatrix} \left( xq^{-m}\right)_n \left( \sum_{k=0}^{n-1} \frac{xq^{k-m}}{1-xq^{k-m}} - \sum_{\substack{k=0\\ k\neq m}}^n \frac{q^{k-m}}{1-q^{k-m}} \right).
\end{align*}
Substituting $m=0$ and $x=d/c$ in the above identity gives
\begin{align*}
\sum_{k=1}^n \frac{q^{k}}{1-q^{k}} - \sum_{k=1}^{n-1} \frac{q^{k}d/c}{1-q^{k}d/c} = \frac{d}{c-d} - \frac{1}{(d/c)_n} \sum_{k=1}^n \begin{bmatrix}
n\\
k
\end{bmatrix} \frac{(cq/d)_k (d/c)_{n-k}}{1-q^{k}}\left(\frac{d}{c}\right)^k.
\end{align*}
Use the above expression and employing \eqref{define T_1} in \eqref{define T_2} yields
\begin{align}\label{T_2 in term of T_1 and T_3}
T_2=\frac{d}{c-d}T_1+T_3,
\end{align}
where
\begin{align*}
T_3&=\frac{1}{(cq)_N} \sum_{n=1}^N \begin{bmatrix}
N\\
n
\end{bmatrix} \frac{(ceq)_{N-n} (q/e)_{n-1}(cq)^ne^{n-1}}{ (q)_{n-1}} \sum_{k=1}^n \begin{bmatrix}
n\\
k
\end{bmatrix} \frac{(cq/d)_n (d/c)_{n-k} (d/c)^k}{1-q^{k}} \\
&=\frac{(q)_N}{(cq)_N} \sum_{n=1}^N \frac{(ceq)_{N-n} (q/e)_{n-1}(cq)^ne^{n-1}}{ (q)_{N-n} (q)_{n-1}} \sum_{k=1}^n \frac{(cq/d)_k (d/c)_{n-k} (d/c)^k}{(q)_k (q)_{n-k} (1-q^{k})} \\
&=\frac{(q)_N}{(cq)_N} \sum_{k=1}^N \frac{(cq/d)_k  (d/c)^k}{(q)_k  (1-q^{k})} \sum_{n=k}^N \frac{(ceq)_{N-n} (d/c)_{n-k} (q/e)_{n-1}(cq)^ne^{n-1}}{ (q)_{n-k} (q)_{N-n} (q)_{n-1}}\\
&=\frac{(q)_N}{(cq)_N} \sum_{k=1}^N \frac{(cq/d)_k  (d/c)^k}{(q)_k  (1-q^{k})} \sum_{n=0}^{N-k} \frac{(ceq)_{N-n-k} (d/c)_{n} (q/e)_{n+k-1}(cq)^{n+k}e^{n+k-1}}{ (q)_{n} (q)_{N-n-k} (q)_{n+k-1}}\\
&=\frac{(q)_N}{(cq)_N} \sum_{k=1}^N \frac{(cq/d)_k (q/e)_{k-1} (dq)^k e^{k-1}}{(q)_k (q)_{k-1} (1-q^{k})} \sum_{n=0}^{N-k} \frac{(ceq)_{N-n-k} (d/c)_{n} (q^k/e)_{n}(ceq)^{n}}{ (q)_{N-n-k} (q)_{n}  (q^k)_{n}} \\
&=\frac{(q)_N}{(cq)_N} \sum_{k=1}^N \frac{(cq/d)_k (ceq)_{N-k} (q/e)_{k-1} (dq)^k e^{k-1}}{(q)_k (q)_{N-k} (q)_{k-1} (1-q^{k})} \sum_{n=0}^{N-k} \frac{ \left(q^{-(N-k)}\right)_n (d/c)_{n} (q^k/e)_{n}q^{n}}{ (q^k)_{n} \left(q^{-(N-k)}/ce\right)_n (q)_{n}},
\end{align*}
where in the last step, we used \eqref{basic formula 2} with $N$ replaced by $N-k$ and $x=ce$. Now use \eqref{finite Heine transformation} with $N$ replaced by $N-k,~ \a=d/c,~ \b=q^k/e,~ \g=q^k$ and $\t=ceq$ to see that
\begin{align*}
\sum_{n=0}^{N-k} \frac{ \left(q^{-(N-k)}\right)_n (d/c)_{n} (q^k/e)_{n}q^{n}}{ (q^k)_{n} \left(q^{-(N-k)}/ce\right)_n (q)_{n}}=\frac{(q^k/e)_{N-k} (deq)_{N-k}}{(q^k)_{N-k} (ceq)_{N-k}} {}_{3}\phi_{2}\left[\begin{matrix} q^{-(N-k)}, & e, & ceq \\  deq, & eq^{1-N} \end{matrix} \, ; q, q  \right].
\end{align*}
Using the above identity in the final expression of $T_3$, we get
\begin{align}\label{final value of T_3}
T_3&=\frac{(q/e)_{N-1}}{(cq)_N(q)_{N-1}} \sum_{k=1}^N \begin{bmatrix}
N\\k
\end{bmatrix} \frac{(cq/d)_k (deq)_{N-k} (dq)^k e^{k-1}}{(1-q^{k})}{}_{3}\phi_{2}\left[\begin{matrix} q^{-(N-k)}, & e, & ceq \\  deq, & eq^{1-N} \end{matrix} \, ; q, q  \right].
\end{align}
Now utilize \eqref{final value of T_1} and \eqref{final value of T_3} in \eqref{T_2 in term of T_1 and T_3} to deduce
\begin{align}\label{final value of T_2}
T_2&=\frac{d}{c-d}\frac{(deq)_N}{(cq)_N} \sum_{n=1}^N \frac{(q^{-N})_n (d/c)_n (eq)_{n-1}}{(deq)_n (q)_n (q)_{n-1}} \left(cq^{N+1}\right)^n + \frac{(q/e)_{N-1}}{(cq)_N(q)_{N-1}} \nonumber \\
& \times \sum_{k=1}^N \begin{bmatrix}
N\\k
\end{bmatrix} \frac{(cq/d)_k (deq)_{N-k} (dq)^k e^{k-1}}{(1-q^{k})}{}_{3}\phi_{2}\left[\begin{matrix} q^{-(N-k)}, & e, & ceq \\  deq, & eq^{1-N} \end{matrix} \, ; q, q  \right].
\end{align}
Now combine \eqref{final value of T_1}, \eqref{final value of T_2} and Lemma \ref{generalization of lemma 4.1 Dixit-Patel} in \eqref{define T_1+T_2} to get the desired identity \eqref{generalization of Theorem 4.3 Dixit-Patel equation}.
\end{proof}

\section*{Acknowledgement}
The author wants to thank Dr. Bibekananda Maji for carefully reading this article and giving insightful suggestions. The author also acknowledges the University Grants Commission (UGC), India (ID: 191620076670), for awarding the Ph.D. fellowship, and expresses thanks to IIT Indore for providing state-of-the-art research facilities.

\end{document}